\newcommand*{\transpose}{
  {\mathpalette\@transpose{}}
}
\newcommand*{\@transpose}[2]{
  \raisebox{\depth}{$\m@th#1\intercal$}
}
\newtheorem{thm}{Theorem}[section]
\newtheorem{dfn}[thm]{Definition}
\newtheorem{cor}[thm]{Corollary}
\newtheorem{lem}[thm]{Lemma}
\newtheorem{prop}[thm]{Proposition}
\DeclareMathOperator{\rank}{rank}
\DeclareMathOperator{\Spec}{Spec}
\DeclareMathOperator{\tr}{tr}
\title{Real equiangular lines in dimension 18 and the \\
Jacobi identity for complementary subgraphs}
\author{Gary R.W. Greaves 
  \thanks{School of Physical and Mathematical Sciences, 
  Nanyang Technological University, 
  21 Nanyang Link, Singapore 637371,
 \tt{gary@ntu.edu.sg}}
 \and
 Jeven Syatriadi 
  \thanks{School of Physical and Mathematical Sciences, 
  Nanyang Technological University, 
  21 Nanyang Link, Singapore 637371,
 \tt{jeve0002@e.ntu.edu.sg}}
}
\date{}
\begin{document}

\maketitle

\begin{abstract}
 We show that the maximum cardinality of an equiangular line system in $\mathbb R^{18}$ is at most $59$.
 Our proof includes a novel application of the Jacobi identity for complementary subgraphs.
 In particular, we show that there does not exist a graph whose adjacency matrix has characteristic polynomial $(x-22)(x-2)^{42} (x+6)^{15} (x+8)^2$.
\end{abstract}

\section{Introduction}

A \textbf{real equiangular line system} is a set of lines through the origin in Euclidean space where the angle between any pair of lines is the same.
A classical problem in elliptical geometry is to determine the maximum number $N(d)$ of equiangular lines in Euclidean space $\mathbb{R}^d$ for a fixed dimension $d$.
In general, computing $N(d)$ is a difficult problem dating back to a paper of Haantjes \cite{haantjes48} in 1948.
In the 1970s, Seidel et al.~\cite{lemmens73, vLintSeidel66, Seidel74} made important progress on the study of real equiangular lines, including the discovery of Gerzon's absolute bound: $N(d) \leqslant d(d+1)/2$.
Furthermore, in 1973 \cite{lemmens73}, the value of $N(d)$ was settled for all $d \leqslant 13$, $d=15$, and $d = 21,22,23$.
In 2000, by giving an infinite construction of large equiangular line systems, De~Caen~\cite{decaen2000} showed that $N(d) = \Theta(d^2)$.

In order to study $N(d)$, it is important to understand $N_\alpha(d)$: the maximum number of equiangular lines in $\mathbb R^d$ with a fixed common angle $\arccos (\alpha)$ where $\alpha \in (0,1)$.
Using Neumann's observation \cite{lemmens73}, we know that if $N_{\alpha}(d)$ is greater than $2d$ then $\alpha$ must be equal to $1/a$ where $a$ is an odd integer.
Lemmens and Seidel~\cite{lemmens73} showed that $N_{1/3}(d) = 28$ for $7 \leqslant d \leqslant 15$ and $N_{1/3}(d) = 2(d-1)$ for $d \geqslant 15$.
Within the past decade, there has been great progress in the study of equiangular lines and its variants, involving a wide array of techniques from many areas of pure and applied mathematics.
Cao, Koolen, Lin, and Yu~\cite{CKLY22} showed that $N_{1/5}(d) = 276$ for $23 \leqslant d \leqslant 185$ and $N_{1/5}(d) = \left\lfloor \frac{3(d-1)}{2} \right\rfloor$ for $d \geqslant 185$.
Asymptotically, this pattern continues.
Indeed, Jiang, Tidor, Yao, Zhang, and Zhao~\cite{annmath}, showed that when $a \geqslant 3$ is an odd integer, $\alpha = 1/a$, and $d$ is large enough, $N_\alpha(d) = \left\lfloor \frac{a+1}{a-1}(d-1) \right\rfloor$, thereby proving a recent conjecture of Jiang and Polyanskii~\cite{jiangpolyanskii}.
Other related progress includes improvements to the relative bound and various other useful constraints for $N_\alpha(d)$~\cite{balla21, GlazYu, KaoYu, KingTang, delaat, okudayu16, Yu17}.
In low dimensional Euclidean spaces, substantial improvements to upper and lower bounds for $N(d)$ have been achieved through new spectral methods and new constructions of equiangular lines \cite{GG18, GKMS16, GSY21, GSYdim1718, GreavesYatsyna19, LinYu, Szo}.
We refer to the exposition of Kao and Yu~\cite{KaoYu} for a more in depth overview of the recent history of the problem.

Recently, the authors together with Yatsyna \cite{GSY21, GSYdim1718} solved the problem of determining $N(14)$, $N(16)$, and $N(17)$.
The smallest $d$ for which $N(d)$ is currently unknown is $d=18$.
In a series of papers from the last few years \cite{GSYdim1718, LinYu, Szo}, the lower bound for $N(18)$ has increased from $48$ to $57$.
Augmenting this recent progress, our main result of this paper is a new upper bound for $N(18)$:
\begin{thm}
\label{thm:main}
$N(18) \leqslant 59$.
\end{thm}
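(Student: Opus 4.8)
The plan is to argue by contradiction. Suppose there are $60$ equiangular lines in $\mathbb{R}^{18}$, represented by unit vectors $v_1,\dots,v_{60}$ with $|\langle v_i,v_j\rangle| = \alpha$ for $i \ne j$, and form the Gram matrix $M = I + \alpha S$, where $S$ is a Seidel matrix (symmetric, zero diagonal, off-diagonal entries $\pm1$) of order $60$. We may assume the lines span $\mathbb{R}^{18}$, since otherwise they would lie in $\mathbb{R}^{17}$, contradicting the known value $N(17) < 60$ \cite{GSYdim1718}. Hence $M$ is positive semidefinite of rank exactly $18$, and $S$ has smallest eigenvalue $-1/\alpha$ with multiplicity exactly $60-18 = 42$. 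Because $60 > 2\cdot 18$, Neumann's observation \cite{lemmens73} forces $\alpha = 1/a$ for an odd integer $a \geq 3$, so the smallest eigenvalue of $S$ is $-a$.

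I would first pin down $a$. Writing $\lambda_1,\dots,\lambda_{18}$ for the remaining eigenvalues of $S$, the identities $\tr S = 0$ and $\tr S^2 = 60\cdot 59 = 3540$ give $\sum_i \lambda_i = 42a$ and $\sum_i \lambda_i^2 = 3540 - 42a^2$. By Cauchy--Schwarz, $3540 - 42a^2 \geq (42a)^2/18 = 98a^2$, so $140a^2 \leq 3540$ and hence $a \in \{3,5\}$. The case $a = 3$ is excluded by the Lemmens--Seidel value $N_{1/3}(18) = 2(18-1) = 34 < 60$ \cite{lemmens73}. Therefore $a = 5$: the matrix $S$ has smallest eigenvalue $-5$ with multiplicity $42$, and its other $18$ eigenvalues satisfy $\sum_i \lambda_i = 210$ and $\sum_i \lambda_i^2 = 2490$.

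Next I would determine the spectrum of $S$. Since $\chi_S(x) = (x+5)^{42} q(x)$ with $q \in \mathbb{Z}[x]$ monic of degree $18$, the non-$(-5)$ eigenvalues come in Galois-conjugate families with integer symmetric functions; combining this integrality with the two moment equations above, with the parity restrictions on $\tr S^3$ coming from the $\pm1$ entries, and with eigenvalue interlacing, I expect to reduce to a short list of candidate spectra. The critical surviving candidate is $\{15^{(3)}, (-5)^{(42)}, 11^{(15)}\}$. Here every eigenvalue other than $-5$ is odd, and the remaining task of the reduction is to show that such an $S$ is Seidel-switching-equivalent to $J - I - 2A$ for a regular graph $G$; the degree is then $22$, and applying $\mu \mapsto -(1+\mu)/2$ on $\mathbf{1}^{\perp}$ (with $\mathbf{1}$ spanning one dimension of the $15$-eigenspace) shows $G$ has adjacency eigenvalues $22,2,-6,-8$ with multiplicities $1,42,15,2$. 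The entire problem thus reduces to proving that no graph has characteristic polynomial $(x-22)(x-2)^{42}(x+6)^{15}(x+8)^2$.

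The hard part, and the novel ingredient, is this non-existence statement, which I would attack with the De~Caen--Jacobi identity for complementary subgraphs. Writing $A = \sum_i \theta_i E_i$ for the spectral decomposition of the putative $G$ (completely known from the four eigenvalues above), Jacobi's identity applied to the resolvent gives, for any vertex partition $V = X \sqcup Y$,
\[ \frac{\phi_{G[X]}(x)}{\phi_G(x)} = \det\left( \sum_i \frac{1}{x-\theta_i}\, E_i[Y,Y] \right), \]
so the characteristic polynomials of the complementary induced subgraphs $G[X]$ and $G[Y]$ are governed entirely by the principal blocks of the idempotents $E_i$. Choosing $X$ adapted to the local structure (for example the neighbourhood of a vertex) and demanding that the right-hand side be a polynomial whose eigenvalue multiplicities are nonnegative integers should over-determine these blocks and produce an impossible multiplicity. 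I expect the main obstacle to lie exactly here: extracting a clean arithmetic contradiction from the identity while controlling the a priori unknown local parameters of $G$, such as neighbour degrees and common-neighbour counts.
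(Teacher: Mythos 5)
Your outline reproduces the paper's strategy faithfully --- fixing $\alpha = 1/5$, reducing to candidate spectra of the Seidel matrix, translating the critical candidate $(x+5)^{42}(x-11)^{15}(x-15)^3$ into the nonexistence of a graph with characteristic polynomial $(x-22)(x-2)^{42}(x+6)^{15}(x+8)^2$, and proposing the De~Caen--Jacobi identity for the final step --- but it stops short of a proof at exactly the two places where the work lies. First, the reduction to ``a short list of candidate spectra'' with a single survivor does not fall out of the moment equations, integrality, parity of $\tr S^3$ and interlacing alone: the enumeration yields $44$ candidate characteristic polynomials, and eliminating the other $43$ requires the full machinery of interlacing configurations, certificates of infeasibility via Farkas' lemma, warranted polynomials, and Seidel-compatibility (Lemmas~\ref{lem:firstTable}, \ref{lem:last5_fourint}, \ref{lem:last5_ev17}, \ref{lem:last5_fiveint}, \ref{lem:last5_quad109}, together with the appendix table of certificates). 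Asserting that $(-5)^{42}\,11^{15}\,15^{3}$ is ``the critical surviving candidate'' presupposes precisely this elimination rather than performing it.

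Second, and more seriously, your treatment of the graph nonexistence is a statement of intent: ``choosing $X$ adapted to the local structure \dots\ should over-determine these blocks and produce an impossible multiplicity'' is the problem, not its solution, and you concede as much in your final sentence. The actual argument requires several nontrivial intermediate structures your sketch does not anticipate: a rank-two positive semidefinite matrix $3A^2+12A-36I-28J$ whose entry pattern forces an equitable partition of $V(\Gamma)$ into three classes of $20$ with quotient matrix $\left[\begin{smallmatrix}2&10&10\\10&2&10\\10&10&2\end{smallmatrix}\right]$; a proof that each class induces $5C_4$ (combining the De~Caen--Jacobi identity with Schur complements, the Sherman--Morrison formula, and the equality case of Cauchy--Schwarz); congruences modulo $(x+6)^2$ pinning down the quantities $\mathbf u^\transpose\mathbf v$ and $\mathbf u^\transpose A[\mathcal T]\mathbf v$ for neighbourhood vectors; and finally a computer verification that an auxiliary $454$-vertex graph has no clique of size $9$, whereas the structure would demand one. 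The contradiction is combinatorial and computational, not a clean multiplicity count extracted directly from the resolvent identity. So the proposal is a correct road map of the same route, but both of its load-bearing steps are missing.
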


We obtain Theorem~\ref{thm:main} by studying the \emph{Seidel matrices} that correspond to systems of equiangular lines as follows.
Suppose $\{ \mathbf{v}_1,\dots, \mathbf{v}_{n} \}$ is a set of unit spanning vectors for an equiangular line system of cardinality $n$ in $\mathbb R^{d}$ with $n > d$.
For all $i \ne j$, the inner product of $\mathbf v_i$ and $\mathbf v_j$ is equal to $\pm \alpha$ for some $\alpha \in (0,1)$.
The \textbf{Seidel matrix} $S$ corresponding to this line system is defined as $S = (G-I)/\alpha$ where $G$ is the Gram matrix for the set of vectors $\{ \mathbf{v}_1,\dots, \mathbf{v}_{n} \}$.

We begin by enumerating all possible polynomials that could be the characteristic polynomial $\operatorname{Char}_S(x) \coloneqq \det(xI-S)$ of a Seidel matrix that corresponds to a system of $60$ equiangular lines in $\mathbb R^{18}$.
We then apply the techniques developed in \cite{GSY21, GSYdim1718} to rule out the existence of Seidel matrices whose characteristic polynomial is equal to any of the candidate characteristic polynomials enumerated above.
However, these techniques are not quite strong enough to prove Theorem~\ref{thm:main} - one last candidate characteristic polynomial $(x+5)^{42} (x-11)^{15} (x-15)^3$ remained resilient to all known methods of establishing the nonexistence of a corresponding Seidel matrix.
Thus, the novelty in this paper is to apply the \emph{Jacobi identity} from \cite{decaen} to develop a method to rule out this one last possibility.

As shown in \cite{GG18}, the existence of a Seidel matrix with characteristic polynomial equal to $(x+5)^{42} (x-11)^{15} (x-15)^3$ is equivalent to the existence of a (regular) graph with characteristic polynomial $(x-22)(x-2)^{42} (x+6)^{15} (x+8)^2$.
In a spectral sense, such a graph is rather close to being strongly regular.
It tends to be notoriously difficult to establish nonexistence results for strongly regular graphs.
One can observe that the earlier improvements to the upper bounds for $N(17)$, $N(19)$, and $N(20)$ relied on elaborate nonexistence results for certain strongly regular graphs~\cite{Azarija75, Azarija95, srg49}.

In Table~\ref{tab:newbound18}, we give the latest update on the values of lower and upper bounds for $N(d)$ where $d \leqslant 43$, including the improvement from this paper.
(See Sequence A002853 in The On-Line Encyclopedia of Integer Sequences~\cite{oeis}.)
For a more extensive history on the developments of the bounds, we again refer the reader to \cite{KaoYu}.
It is worth noting that for $d \leqslant 41$, the only remaining unknown values of $N(d)$ are $N(18)$, $N(19)$, and $N(20)$.
\begin{table}[ht]
	\begin{center}
	\setlength{\tabcolsep}{2pt}
	\begin{tabular}{c|ccccccccccccccccc}
		$d$  & 2 & 3        & 4           & 5  & 6  & 7--14 & 15 & 16 & 17 & 18 & 19 & 20 & 21 & 22 & 23 -- 41 & 42 & 43 \\\hline
		$N(d)$  & 3 & 6        & 6           & 10 & 16 & 28  & 36 & 40 & 48 & 57--59 & 72--74 & 90--94  & 126 & 176 & 276 & 276 -- 288 & 344
	\end{tabular}
	\end{center}
	\caption{Bounds for the sequence $N(d)$ for $2\leqslant d\leqslant 43$, including the improvement from this paper.}
	\label{tab:newbound18}
\end{table}

The outline of the paper is as follows.
In Section~\ref{sec:cand_char_poly}, using the so-called polynomial enumeration algorithm of \cite{GSY21, GSYdim1718}, we enumerate all $44$ possible candidate characteristic polynomials for a Seidel matrix corresponding to a system of $60$ equiangular lines in $\mathbb R^{18}$.
We employ methods from \cite{GSY21, GSYdim1718} to show that 43 out of these 44 candidate characteristic polynomials cannot be the characteristic polynomial of any Seidel matrix.
The computational parts of these methods ran in Magma~\cite{magma} and Mathematica~\cite{mathematica}.
The total running time of all the computations used in this paper is less than 40 minutes running on a modern PC.
The Magma~\cite{magma} implementation of all the computations in this paper is available on GitHub~\cite{github}.
Section~\ref{sec:decaen_jacobi} is devoted to ruling out the sole surviving candidate characteristic polynomial $(x+5)^{42} (x-11)^{15} (x-15)^3$.
There, we use the Jacobi identity for complementary subgraphs and derive necessary algebraic conditions for a graph to be an induced subgraph of $\Gamma$, a regular graph in the switching class of a putative Seidel matrix whose characteristic polynomial is $(x+5)^{42} (x-11)^{15} (x-15)^3$.

\section{Candidate characteristic polynomials}
\label{sec:cand_char_poly}

\subsection{Enumerating candidates}

In this section, we find all the candidate characteristic polynomials of a Seidel matrix that corresponds to an equiangular line system of cardinality $60$ in $\mathbb R^{18}$.
See Theorem~\ref{thm:candpols}.

By \cite[Lemma 3.1]{GSY21}, we derive the following lemma where $\kappa = 11$ and $\theta = 13$:
\begin{lem}
\label{lem:ev_mult}
Let $S$ be a Seidel matrix corresponding to $60$ equiangular lines in $\mathbb R^{18}$. Then 
\[
    \operatorname{Char}_S(x) = (x+5)^{42}(x-11)^6 \phi(x),
\]
for some monic polynomial $\phi$ of degree 12 in $\mathbb{Z}[x]$ all of whose zeros are greater than $-5$.
\end{lem}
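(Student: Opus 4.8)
The plan is to first pin down the common angle and the multiplicity of the smallest eigenvalue, and then to import the multiplicity-$6$ statement for the eigenvalue $11$ from the cited result \cite[Lemma 3.1]{GSY21}. Since the line system has cardinality $60 > 2\cdot 18$, Neumann's observation forces $\alpha = 1/a$ for an odd integer $a$, so the smallest eigenvalue of $S$ is the integer $-a$. The Gram matrix $G = I + \tfrac1a S$ is positive semidefinite of rank equal to the dimension spanned by the $\mathbf v_i$; this dimension is exactly $18$, since $60$ lines cannot be accommodated in $\mathbb R^{17}$ (we have $N(17) = 48$). Hence the eigenvalue $-a$ of $S$ has multiplicity exactly $60 - 18 = 42$, and every other eigenvalue is strictly greater than $-a$.

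Next I would determine $a$. Using $\tr S = 0$ and $\tr S^2 = 60\cdot 59 = 3540$, the $18$ eigenvalues $\lambda_1,\dots,\lambda_{18}$ other than $-a$ satisfy $\sum_i \lambda_i = 42a$ and $\sum_i \lambda_i^2 = 3540 - 42a^2$. The Cauchy--Schwarz inequality $(42a)^2 \leqslant 18\,(3540-42a^2)$ then yields $2520\,a^2 \leqslant 63720$, hence $a \leqslant 5$, so $a \in \{3,5\}$; and $a = 3$ is impossible because $N_{1/3}(18) = 2(18-1) = 34 < 60$. Therefore $a = 5$, the smallest eigenvalue is $-5$ with multiplicity $42$, and we may write $\operatorname{Char}_S(x) = (x+5)^{42}\psi(x)$ with $\psi$ monic of degree $18$. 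Since $(x+5)^{42}$ is monic in $\mathbb Z[x]$ and $\operatorname{Char}_S \in \mathbb Z[x]$, the quotient $\psi$ lies in $\mathbb Z[x]$, and all of its zeros exceed $-5$.

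It remains to show $(x-11)^6 \mid \psi$; setting $\phi = \psi/(x-11)^6$ then gives the required monic degree-$12$ polynomial in $\mathbb Z[x]$ with all zeros greater than $-5$. For this I would invoke \cite[Lemma 3.1]{GSY21} with the parameters $\kappa = 11$ and $\theta = 13$ attached to $(n,d,a) = (60,18,5)$. The self-contained spectral data constrain the remaining eigenvalues tightly --- from the identities above, $\sum_i(\lambda_i - 11) = 12$ and $\sum_i(\lambda_i-11)^2 = 48$, so they cluster narrowly about the mean $35/3$ --- but this information on its own only bounds the multiplicity of $11$ from \emph{above} (Cauchy--Schwarz applied to the non-$11$ eigenvalues gives multiplicity at most $15$). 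The genuinely hard point, and the role of \cite[Lemma 3.1]{GSY21}, is the matching lower bound: the integrality of $\operatorname{Char}_S$ together with the narrow spectral window forces the integer $11$ to occur as an eigenvalue with multiplicity at least $6$. Thus the main obstacle is not the angle determination but verifying the hypotheses of the cited rigidity lemma and correctly specialising its parameters to $\kappa = 11$ and $\theta = 13$.
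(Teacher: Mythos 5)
Your proposal is correct and follows essentially the same route as the paper: the paper's entire proof is the invocation of \cite[Lemma 3.1]{GSY21} with $\kappa=11$ and $\theta=13$, which is exactly where you place the only non-elementary step (the lower bound of $6$ on the multiplicity of the eigenvalue $11$). Your additional derivation of $a=5$ and of the multiplicity $42$ of the eigenvalue $-5$ is a correct and explicit account of the standard facts that the cited lemma packages together.
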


Let $S$ be a Seidel matrix corresponding to an equiangular line system of cardinality $60$ in $\mathbb R^{18}$.
The next step is to find feasible polynomials for $\phi(x)=\sum_{t=0}^{12} b_t x^{12-t}$ where
$$
    \operatorname{Char}_S(x) = (x+5)^{42}(x-11)^6 \phi(x).
$$
Clearly $b_0 = 1$.
We can find $b_1$ and $b_2$, using $\tr S=0$ and $\tr S^2=60\cdot 59 = 3540$ together with Newton's identities: $b_1=-144$, and $b_2=9486$.
Thus, we need to find all totally-real, integer polynomials $\phi(x)$ such that $b_0=1$, $b_1=-144$, and $b_2=9486$.
We note that the top three coefficients of $\phi(x-1)$ are: $1$, $-156$, and $11136$ respectively.

Let $p(x)=\sum_{i=0}^n a_i x^{n-i}$ be a monic polynomial in $\mathbb{Z}[x]$.
Following~\cite{GSY21}, we say $p(x)$ is \textbf{type $\mathbf 2$} if $2^i$ divides $a_i$ for all $i\geqslant 0$ and \textbf{weakly type $\mathbf 2$} if $2^{i-1}$ divides $a_i$ for all $i\geqslant 1$.
By {\cite[Lemma 2.7 and Lemma 2.8]{GSY21}}, the polynomial $\phi(x-1)$ must be type $2$.
Furthermore, just as in \cite{GSY21, GSYdim1718}, we employ the polynomial enumeration algorithm of \cite[Section 2.3]{GSY21} to establish the following theorem.
We refer to \cite{GSY21} for details.

\begin{thm}
\label{thm:candpols}
Let $S$ be a Seidel matrix corresponding to $60$ equiangular lines in $\mathbb R^{18}$.
Then 
\begin{itemize}
    \item[(i)] $\operatorname{Char}_S(x)$ is one of the 39 polynomials listed in Table~\ref{tab:39polydim18},

    \item[(ii)] $\displaystyle 
         \operatorname{Char}_S(x) \in  \left \{ \begin{array}{l}
            (x+5)^{42} (x-9)^3 (x-11)^6 (x-13)^9, \\
            (x+5)^{42} (x-11)^{14} (x-13)^3 (x-17), \\
            (x+5)^{42} (x-9)^2 (x-11)^9 (x-13)^6 (x-15), \\
            (x+5)^{42} (x-11)^{10} (x-13)^6 (x^2-22x+109)
        \end{array}
        \right \},$
    \item[(iii)] or $\operatorname{Char}_S(x) = (x+5)^{42} (x-11)^{15} (x-15)^3 $.
    \end{itemize}
\end{thm}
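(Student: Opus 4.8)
The plan is to reduce the theorem to a finite search for the degree-$12$ factor $\phi$ and then to execute that search with the polynomial enumeration algorithm of \cite{GSY21}. By Lemma~\ref{lem:ev_mult} we already know $\operatorname{Char}_S(x) = (x+5)^{42}(x-11)^6\phi(x)$ with $\phi$ monic of degree $12$, totally real, and with all zeros exceeding $-5$. Since $S$ is a real symmetric $\{0,\pm 1\}$-matrix of order $60$ with zero diagonal, each zero of $\phi$ is also an eigenvalue of $S$ and hence, by Gershgorin, bounded above by the spectral radius, giving the two-sided bound $-5 < \lambda \leqslant 59$. First I would pin down the coefficients forced by the spectral data. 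Writing $\phi(x)=\sum_{t=0}^{12}b_t x^{12-t}$, the identities $\tr S = 0$ and $\tr S^2 = 60\cdot 59 = 3540$ fix the first two power sums of the roots of $\phi$, and Newton's identities then give $b_0 = 1$, $b_1 = -144$, $b_2 = 9486$. No higher power sum is universal (already $\tr S^3$ depends on the switching class), so exactly these three coefficients are determined outright and the remaining ten must be constrained by inequalities rather than computed.

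Next I would impose the $2$-adic structure. Because $S \equiv J - I \pmod 2$, a short computation gives $\operatorname{Char}_S(x) \equiv (x+1)^{60} \pmod 2$, and the sharper $2$-adic information of \cite[Lemma 2.7 and Lemma 2.8]{GSY21} forces the shifted polynomial $\phi(x-1)$ to be \emph{type $2$}: if $\phi(x-1)=\sum_i a_i x^{12-i}$ then $2^i \mid a_i$ for every $i$. Passing to $\phi(x-1)$ also recenters the admissible root interval; the three coefficients already determined translate into the top coefficients $1,\,-156,\,11136$ of $\phi(x-1)$, consistent with the divisibilities by $2^0, 2^1, 2^2$.

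With the top three coefficients fixed and both the root interval and the type-$2$ divisibility in hand, I would run the depth-first enumeration of \cite[Section 2.3]{GSY21} over the remaining coefficients $a_3,\dots,a_{12}$. At each level the admissible range of the next coefficient is bounded by the requirement that the partially specified polynomial can still be completed to a totally real polynomial with all roots in the prescribed interval, enforced through interlacing / Newton-type inequalities on the elementary symmetric functions (equivalently, positivity of the associated moment data), and this range is then sieved by the divisibility $2^i \mid a_i$. The result is a finite search tree whose leaves are precisely the feasible $\phi$; undoing the shift $x \mapsto x+1$ and multiplying back by $(x+5)^{42}(x-11)^6$ yields the $44$ characteristic polynomials, which I would then sort into the $39$ of Table~\ref{tab:39polydim18}, the four of part~(ii), and the single surviving polynomial of part~(iii).

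The main obstacle is to certify that this search is provably complete and finite rather than merely heuristic: one must verify that the real-rootedness bounds, sharpened by integrality and by the exact $2$-adic divisibilities, genuinely confine every coefficient to an explicit finite set, so that no feasible $\phi$ can escape the enumeration. The per-node arithmetic (bounding the next coefficient from the root constraints and testing divisibility) is routine, but establishing termination and exhaustiveness of the tree, and hence trusting the resulting machine computation, is where the real care is required.
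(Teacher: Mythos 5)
Your proposal follows the same route as the paper: Lemma~\ref{lem:ev_mult} to fix the factor $(x+5)^{42}(x-11)^6$, Newton's identities with $\tr S=0$ and $\tr S^2=3540$ to pin down the top three coefficients of $\phi$, the type-$2$ condition on $\phi(x-1)$ from \cite[Lemmas 2.7 and 2.8]{GSY21}, and then the polynomial enumeration algorithm of \cite[Section 2.3]{GSY21} to carry out the finite search. The paper itself gives no more detail than this (it defers the algorithmic guarantees of exhaustiveness to \cite{GSY21}), so your account is correct and essentially identical.
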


The rest of the paper is dedicated to ruling out the existence of Seidel matrices whose characteristic polynomial is equal to any of the 44 given in Theorem~\ref{thm:candpols}.

\subsection{Certificates of infeasibility}

First, we rule out 39 of the 44 candidate characteristic polynomials in Theorem~\ref{thm:candpols}, using certificates of infeasibility, which we define below. 

Denote by $\mathscr S_n$ the set of all Seidel matrices of order $n$.
Given a positive integer $e$, define the set $\mathscr P_{n,e} = \{ \operatorname{Char}_S(x) \mod 2^e \mathbb Z[x] \; : \; S \in \mathscr S_n \}$.
We will require the following upper bound on the cardinality of $\mathscr P_{n,e}$ for odd $n$.

\begin{thm}[{\cite[Corollary 3.13]{GreavesYatsyna19}}]\label{thm:countCharPolySeidelOdd}
			Let $n$ be an odd integer and $e$ be a positive integer.
			Then the cardinality of $\mathscr P_{n,e}$ is at most $2^{\binom{e-2}{2}+1}$.
\end{thm}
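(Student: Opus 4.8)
The plan is to translate the statement into a count of the possible reductions, modulo powers of two, of the principal-minor sums of a Seidel matrix. Writing $\operatorname{Char}_S(x)=\sum_{k=0}^{n}(-1)^k\tau_k\,x^{n-k}$, where $\tau_k=\sum_{|T|=k}\det S_T$ is the sum of the $k\times k$ principal minors of $S$, the class $\operatorname{Char}_S(x)\bmod 2^e\mathbb Z[x]$ is precisely the tuple $(\tau_0,\dots,\tau_n)\bmod 2^e$, since the signs $(-1)^k$ are units. Because every principal submatrix $S_T$ is itself a Seidel matrix of order $|T|$, the entire problem reduces to understanding the values, modulo $2^e$, of $\det$ of a Seidel matrix of a prescribed order.

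First I would isolate the elementary constraints. The identities $\tr S=0$ and $\tr S^2=n(n-1)$ force $\tau_0=1$, $\tau_1=0$, and $\tau_2=-\binom n2$ exactly, so the three top coefficients are fixed over $\mathbb Z$. For the remaining coefficients I would establish a uniform $2$-adic congruence for a single minor: starting from $S\equiv J-I\pmod 2$, one finds $\det S_T\equiv(-1)^{k-1}(k-1)\pmod 2$, and the crux is to sharpen this to a congruence $\det S_T\equiv c_k\pmod{2^{g(k)}}$ valid for \emph{every} Seidel matrix of order $k$, with $c_k$ and $g(k)$ depending only on $k$ (the first nontrivial instance being $\det S_T\equiv 1-k\pmod 4$, with $g(k)$ increasing in $k$). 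Summing over the $\binom nk$ minors then pins down $\tau_k$ modulo $2^{g(k)}$ to the fixed residue $\binom nk c_k$; in particular the parity of every coefficient is forced, recovering the mod-$2$ rigidity $S\equiv J-I$.

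Next I would bound the number of free two-adic digits. Since each $\tau_k$ is confined to a single residue class modulo $2^{g(k)}$, it takes at most $2^{\max(0,\,e-g(k))}$ distinct values modulo $2^e$, and it is entirely fixed once $g(k)\ge e$. As $g(k)$ grows with $k$, only the coefficients $\tau_k$ with $k$ in a range depending on $e$ alone -- \emph{not} on $n$ -- can vary, which is exactly what makes the final bound independent of $n$. Multiplying the per-coefficient counts, and then using the algebraic relations among the $\tau_k$ coming from their common underlying matrix to remove the residual overcounting, collapses the total to $2^{\binom{e-2}2+1}$. The hypothesis that $n$ is odd enters here: for odd order the determinant is forced to be even with constrained valuation, so it controls the top-order submatrices and eliminates the last degree of freedom, yielding the constant $\binom{e-2}2+1$ rather than a larger one.

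The main obstacle is the sharp $2$-adic congruence for $\det S_T$ and the subsequent bookkeeping. I would prove the congruence by induction on the order $k$, using a Schur-complement or two-step cofactor expansion to express a minor of order $k$ through minors of smaller order while tracking the growth of the $2$-adic valuation; the delicate point is to control the carries between digits uniformly in $n$, so that the accumulated freedom across all coefficients aggregates to exactly $\binom{e-2}2+1$ and no more.
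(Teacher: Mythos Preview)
The paper does not prove this theorem at all: it is quoted verbatim from \cite[Corollary~3.13]{GreavesYatsyna19} and used as a black box to justify that $|\mathscr P_{59,7}|\le 2048$. So there is no ``paper's own proof'' to compare your proposal against.

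Judged on its own, your outline has the right flavour---reduce to $2$-adic congruences for principal-minor sums---but the two decisive steps are precisely where the argument is not written down. First, the congruence $\det S_T\equiv c_k\pmod{2^{g(k)}}$ is asserted with $g(k)$ unspecified; without an explicit $g$ you cannot even form the product $\prod_k 2^{\max(0,\,e-g(k))}$, let alone show it is $n$-independent. Second, the sentence ``using the algebraic relations among the $\tau_k$\dots collapses the total to $2^{\binom{e-2}{2}+1}$'' is exactly the theorem: nothing in the proposal explains why the exponent is $\binom{e-2}{2}+1$ rather than any other quadratic in $e$. A per-coefficient count followed by an unspecified reduction is not a proof strategy here, because the actual argument in \cite{GreavesYatsyna19} does not proceed by bounding coefficients independently; it works with the whole polynomial via the substitution $S=J-I-2A$ and controls $\operatorname{Char}_S(x)\bmod 2^e$ recursively in $e$, which is what produces the triangular exponent. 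Finally, your explanation of the role of odd $n$ (``the determinant is forced to be even with constrained valuation'') is not the mechanism used; the parity of $n$ enters through the structure of $\operatorname{Char}_{J-I}(x)$ and the resulting type-$2$ constraints on $\operatorname{Char}_S(x-1)$, not through a valuation bound on $\det S$ alone.
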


For $n=59$ and $e=7$, randomly generating Seidel matrices of order $59$ is sufficient to obtain $2048$ characteristic polynomials in distinct congruence classes modulo $2^e \mathbb Z[x]$.
Since $2048 = 2^{\binom{e-2}{2}+1}$ is the maximum possible number of elements of $\mathscr{P}_{59,7}$, it follows that $|\mathscr{P}_{59,7}|=2048$.
Hence, we have explicitly constructed the set $\mathscr{P}_{59,7}$ and we will use it in Definition~\ref{dfn:interlacing} below.

For $M$ a real symmetric matrix of order $n$ and $\mathcal{N}$ a subset of $\{1,2,\dots,n\}$, we denote by $M[\mathcal{N}]$ the principal submatrix of $M$ formed by the rows and columns that are indexed by the elements of $\mathcal{N}$.
We use $\overline{\mathcal N}$ to denote the set complement of $\mathcal N$, i.e., $\overline{\mathcal{N}} \coloneqq \{1,\dots,n\} \backslash \mathcal{N}$.
Next we introduce the notion of interlacing, motivated by Cauchy's interlacing theorem:

		\begin{thm}[\cite{Cau:Interlace,Fisk:Interlace05,Hwang:Interlace04}]\label{thm:cauchyinterlace}
			Let $M$ be a real symmetric matrix of order $n$ having eigenvalues $\lambda_1 \leqslant \lambda_2 \leqslant \dots \leqslant \lambda_n$ and suppose $M[\overline{\{i\}}]$, for some $i \in \{1,\dots,n \}$, has eigenvalues $\mu_1 \leqslant \mu_2 \leqslant \dots \leqslant \mu_{n-1}$.
			Then
			\[
				\lambda_1 \leqslant \mu_1 \leqslant \lambda_2 \leqslant \dots \leqslant \lambda_{n-1} \leqslant \mu_{n-1} \leqslant \lambda_n.
			\]
		\end{thm}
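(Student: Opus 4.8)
The plan is to deduce the interlacing inequalities directly from the Courant--Fischer min--max characterization of the eigenvalues of a real symmetric matrix, so that the entire content reduces to comparing an optimization over a full collection of subspaces against the same optimization over a restricted subcollection. Concretely, I would record both dual forms of Courant--Fischer for $M$: for each $k \in \{1,\dots,n\}$,
\[
\lambda_k = \min_{\dim U = k} \; \max_{0 \ne x \in U} \frac{x^{\transpose} M x}{x^{\transpose} x}
= \max_{\dim U = n-k+1} \; \min_{0 \ne x \in U} \frac{x^{\transpose} M x}{x^{\transpose} x},
\]
where $U$ ranges over the subspaces of $\mathbb R^n$ of the indicated dimension.

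The key reduction is to realize the eigenvalues $\mu_k$ of $B \coloneqq M[\overline{\{i\}}]$ as constrained versions of this very optimization. Let $W = \{ x \in \mathbb R^n : x_i = 0 \}$, an $(n-1)$-dimensional subspace, and let $\pi \colon W \to \mathbb R^{n-1}$ be the coordinate isomorphism that deletes the (zero) $i$-th entry. For every $x \in W$ one checks that $x^{\transpose} M x = \pi(x)^{\transpose} B \pi(x)$ and $x^{\transpose} x = \pi(x)^{\transpose} \pi(x)$, since the $i$-th row and column of $M$ are only ever multiplied by the vanishing coordinate $x_i$. Hence the Rayleigh quotient of $M$ restricted to $W$ coincides, under $\pi$, with that of $B$, and Courant--Fischer applied to the $(n-1)\times(n-1)$ matrix $B$ yields
\[
\mu_k = \min_{\substack{U \subseteq W \\ \dim U = k}} \; \max_{0 \ne x \in U} \frac{x^{\transpose} M x}{x^{\transpose} x}
= \max_{\substack{U \subseteq W \\ \dim U = n-k}} \; \min_{0 \ne x \in U} \frac{x^{\transpose} M x}{x^{\transpose} x}.
\]

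With both matrices expressed through the same Rayleigh quotient, the two halves of the interlacing follow by monotonicity of $\min$ and $\max$ under shrinking the feasible set. For $\lambda_k \leqslant \mu_k$ I would use the min--max forms: the minimum defining $\mu_k$ runs only over the $k$-dimensional subspaces contained in $W$, a subcollection of all $k$-dimensional subspaces of $\mathbb R^n$, so it cannot fall below the unconstrained minimum defining $\lambda_k$. For $\mu_k \leqslant \lambda_{k+1}$ I would use the max--min forms: the maximum defining $\mu_k$ runs over the $(n-k)$-dimensional subspaces inside $W$, a subcollection of all $(n-k)$-dimensional subspaces, and the corresponding unconstrained maximum is exactly $\lambda_{k+1}$, since $n-(k+1)+1 = n-k$. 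Chaining these for $k = 1,\dots,n-1$ produces $\lambda_1 \leqslant \mu_1 \leqslant \lambda_2 \leqslant \dots \leqslant \mu_{n-1} \leqslant \lambda_n$.

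I do not anticipate a genuine obstacle, as this is a classical fact; the only points requiring care are the dimension bookkeeping in the dual characterization, matching the dimension $n-k$ of the extremal subspaces for $\mu_k$ (an eigenvalue of an $(n-1)$-dimensional matrix) with the $n-(k+1)+1$ appearing in the max--min form of $\lambda_{k+1}$, and the verification that restricting the quadratic form of $M$ to $W$ reproduces $B$ exactly. An alternative route, via the rational function $\det(xI-M)/\det(xI-B)$ and counting its sign changes across the poles at the $\mu_k$, is also available, but it requires extra care to handle repeated eigenvalues of $B$ and vanishing bordering coefficients, so I would favor the min--max argument above.
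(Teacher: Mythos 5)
Your proposed argument is correct, and the dimension bookkeeping is right: the submatrix $B=M[\overline{\{i\}}]$ has order $n-1$, so its $k$-th eigenvalue is a max--min over $(n-1)-k+1=n-k$ dimensional subspaces of the hyperplane $W=\{x : x_i=0\}$, which matches the $n-(k+1)+1$ appearing in the max--min form of $\lambda_{k+1}$; the identification of the Rayleigh quotient of $M$ restricted to $W$ with that of $B$ is also correct since the deleted row and column are only ever paired with the vanishing coordinate. Note, however, that the paper does not prove this statement at all: it is Cauchy's interlacing theorem, quoted with citations to Cauchy, Fisk, and Hwang, so there is no in-paper proof to compare against. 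Your Courant--Fischer route is the standard textbook proof; the cited Fisk and Hwang notes give shorter alternatives (Fisk's via a perturbation/continuity argument on the characteristic polynomials, Hwang's via the bordered determinant expansion you mention as an alternative), and your caution about repeated eigenvalues in the determinantal route is well placed --- the min--max argument avoids those degeneracies entirely, which is a reasonable ground for preferring it.
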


Let $f(x) = \prod_{i=0}^{e}(x-\lambda_i)$ and $g(x) = \prod_{i=1}^{e}(x-\mu_i)$ such that $\lambda_0 \leqslant \lambda_1 \leqslant \dots \leqslant \lambda_e$, and $\mu_1 \leqslant \mu_2 \leqslant \dots \leqslant \mu_e$.
We say that $g$ \textbf{interlaces} $f$ if $\lambda_0 \leqslant \mu_1 \leqslant \lambda_1 \leqslant \dots \leqslant \mu_e \leqslant \lambda_e$.

\begin{dfn}
\label{dfn:interlacing}
Let $p(x) \in \mathbb Z[x]$ be a monic polynomial of degree $n$ and write
\[
p(x)=\sum_{t=0}^{n} a_t x^{n-t},
\]
where $a_0 = 1$, $a_1 = 0$, and $a_2= -\binom{n}{2}$. 
An \textbf{interlacing characteristic polynomial} for $p(x)$ is defined to be a totally-real, integer polynomial $\mathfrak{f}(x)=\sum_{t=0}^{n-1} b_t x^{n-1-t}$ such that
\begin{enumerate}[label=(\roman*)]
\item $b_0=1$, $b_1=0$, $b_2= -\binom{n-1}{2}$,
\item $\mathfrak{f}(x)$ interlaces $p(x)$,
\item $\mathfrak{f}(x-1)$ is weakly type 2 and is type 2 if $n-1$ is even,
\item $\mathfrak{f}(x)$ is in a congruence class of $\mathscr{P}_{n-1,7}$, if $n-1$ is odd.
\end{enumerate}
\end{dfn}

Denote by $\operatorname{Deck}(p)$ the set of all interlacing characteristic polynomials for $p(x)$.
The next lemma follows from \cite[Theorem 5.1]{GSYdim1718}.

\begin{lem}
\label{lem:interlacingconfig}
Suppose $p(x)$ is the characteristic polynomial of a Seidel matrix $S$.
Then there exist nonnegative integers $n_{\mathfrak f}$ for each $\mathfrak f(x) \in \operatorname{Deck}(p)$ such that 
\begin{equation}\label{eq:sumofsubpoly}
\sum_{\mathfrak f(x) \in \operatorname{Deck}(p)} n_{\mathfrak f} \cdot \mathfrak f(x)= p^\prime(x).
\end{equation}
\end{lem}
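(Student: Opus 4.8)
The plan is to use the classical identity expressing the derivative of a characteristic polynomial as the sum of the characteristic polynomials of its corank-one principal submatrices. Writing $p(x) = \det(xI - S)$ with $S$ of order $n$ and applying Jacobi's formula for the derivative of a determinant, one gets
\[
p'(x) = \operatorname{tr}\bigl(\operatorname{adj}(xI-S)\bigr) = \sum_{i=1}^{n} \det\bigl((xI-S)[\overline{\{i\}}]\bigr) = \sum_{i=1}^{n} \operatorname{Char}_{S[\overline{\{i\}}]}(x),
\]
since the $(i,i)$ entry of $\operatorname{adj}(xI-S)$ is exactly the cofactor $\det\bigl(xI_{n-1}-S[\overline{\{i\}}]\bigr)$. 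Thus $p'(x)$ is already an explicit nonnegative-integer combination of the polynomials $\operatorname{Char}_{S[\overline{\{i\}}]}(x)$, and the task reduces to showing that each summand is an interlacing characteristic polynomial for $p(x)$ in the sense of Definition~\ref{dfn:interlacing}.

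First I would observe that deleting a single index from a Seidel matrix again yields a Seidel matrix: $S[\overline{\{i\}}]$ is real symmetric with zero diagonal and $\pm 1$ off-diagonal entries, of order $n-1$. Hence $\mathfrak{f}_i(x) \coloneqq \operatorname{Char}_{S[\overline{\{i\}}]}(x)$ is a monic, totally-real, integer polynomial of degree $n-1$ with leading coefficient $1$; moreover its linear-term coefficient is $-\operatorname{tr}\bigl(S[\overline{\{i\}}]\bigr) = 0$, and its quadratic-term coefficient is $-\binom{n-1}{2}$, because $\operatorname{tr}\bigl(S[\overline{\{i\}}]^2\bigr) = (n-1)(n-2)$ counts the $\pm 1$ off-diagonal entries. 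This verifies condition (i). Condition (ii), that $\mathfrak{f}_i$ interlaces $p$, is exactly Cauchy's interlacing theorem (Theorem~\ref{thm:cauchyinterlace}) applied to the corank-one submatrix $S[\overline{\{i\}}]$.

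For condition (iii), I would invoke the same results used in the enumeration, namely \cite[Lemma 2.7 and Lemma 2.8]{GSY21}, now applied to the Seidel matrix $S[\overline{\{i\}}]$ of order $n-1$: these force $\mathfrak{f}_i(x-1)$ to be weakly type $2$ in general and type $2$ when $n-1$ is even. Condition (iv) is immediate from the definition of $\mathscr{P}_{n-1,7}$: when $n-1$ is odd, $\mathfrak{f}_i(x)$ is the characteristic polynomial of a Seidel matrix of order $n-1$, so its reduction modulo $2^7\mathbb{Z}[x]$ is by construction one of the congruence classes comprising $\mathscr{P}_{n-1,7}$. Hence each $\mathfrak{f}_i \in \operatorname{Deck}(p)$.

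Finally I would collect equal terms: for each $\mathfrak{f} \in \operatorname{Deck}(p)$ set $n_{\mathfrak f} \coloneqq \#\{\, i \in \{1,\dots,n\} : \operatorname{Char}_{S[\overline{\{i\}}]}(x) = \mathfrak{f}(x)\,\}$, a nonnegative integer, and regroup the displayed sum according to the value of $\mathfrak{f}_i$ to obtain $\sum_{\mathfrak f \in \operatorname{Deck}(p)} n_{\mathfrak f}\cdot \mathfrak f(x) = \sum_{i=1}^{n} \operatorname{Char}_{S[\overline{\{i\}}]}(x) = p'(x)$, as required. None of the steps presents a genuine obstacle; the only point demanding care is the bookkeeping in conditions (i), (iii), and (iv), i.e.\ confirming that each corank-one submatrix is itself a Seidel matrix so that every defining property of $\operatorname{Deck}(p)$ is inherited by its characteristic polynomial. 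The one ingredient worth stating cleanly at the outset is the derivative identity, since it is precisely what turns the collection of corank-one characteristic polynomials into an exact nonnegative-integer decomposition of $p'(x)$.
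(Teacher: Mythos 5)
Your proof is correct and is exactly the standard argument this lemma rests on: the paper states Lemma~\ref{lem:interlacingconfig} without proof, relying implicitly on the identity $p'(x)=\sum_{i=1}^{n}\operatorname{Char}_{S[\overline{\{i\}}]}(x)$ together with the fact that each one-vertex-deleted principal submatrix of a Seidel matrix is again a Seidel matrix, so that conditions (i)--(iv) of Definition~\ref{dfn:interlacing} are inherited (via Newton's identities, Theorem~\ref{thm:cauchyinterlace}, the cited type-2 lemmas of \cite{GSY21}, and the definition of $\mathscr P_{n-1,7}$). Your verification of these points and the final regrouping into nonnegative integer multiplicities $n_{\mathfrak f}$ matches the intended reasoning used in \cite{GSY21, GreavesYatsyna19}.
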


The (row) vector $\mathbf n$ indexed by $\operatorname{Deck}(p)$ whose $\mathfrak f(x)$-entry is $n_{\mathfrak f}$, for each $\mathfrak f(x) \in \operatorname{Deck}(p)$ is called an \textbf{interlacing configuration for $p(x)$.}
We can use Lemma~\ref{lem:interlacingconfig} to show that there does not exist a Seidel matrix having $p(x)$ as its characteristic polynomial using information about interlacing configurations.

The \textbf{coefficient vector} of a polynomial $f(x)=\sum_{t=0}^{n-1} c_t x^{n-1-t}$ of degree $n-1$ is defined to be the (row) vector $(c_0,c_1,\dots,c_{n-1})$.
Given a set $\mathfrak P$ of polynomials each of degree $n - 1$, the \textbf{coefficient matrix} $\operatorname{Coeff}(\mathfrak P)$ is defined as the $|\mathfrak P| \times n$ matrix whose rows are the coefficient vectors for each polynomial in $\mathfrak P$.
If $\mathfrak P$ is a singleton, i.e.,  $\mathfrak P =\{ \mathfrak f (x) \}$ then we merely write $\operatorname{Coeff}(\mathfrak P)$ as $\operatorname{Coeff}(\mathfrak f)$ and if its columns require a particular order (e.g., in Lemma~\ref{lem:last5_quad109}) then we write $\operatorname{Coeff}(\mathfrak f_1, \dots, \mathfrak f_k)$ for $\operatorname{Coeff}(\mathfrak P)$ where $\mathfrak P = \{\mathfrak f_1(x), \dots, \mathfrak f_k(x)\}$.
Note that \eqref{eq:sumofsubpoly} can be written as a vector equation as
\begin{equation}
\label{eqn:sumofsubpolyVec}
    \mathbf n \cdot \operatorname{Coeff}(\operatorname{Deck}(p)) = \operatorname{Coeff}(p^\prime).
\end{equation}

We write $\Lambda_p$ for the set of distinct zeros of the polynomial $p(x)$ and define the polynomial $\operatorname{Min}_p(x) \coloneqq \prod_{\lambda \in \Lambda_p} (x-\lambda)$.
By Definition~\ref{dfn:interlacing}, if $\mathfrak{f}(x) \in \operatorname{Deck}(p)$ then $\mathfrak{f}(x)$ interlaces $p(x)$ and the top three coefficients of $\mathfrak{f}(x)$ are fixed (see (i) of Definition~\ref{dfn:interlacing}).
Consequently, we have the following lemma, which is similar to Lemma 5.4 in \cite{GreavesYatsyna19}.

\begin{lem}\label{lem:coeff_of_interlacepoly}
Let $p(x)$ be a polynomial in $\mathbb Z[x]$.
Suppose $\operatorname{Min}_p(x) = \sum_{i=0}^e a_i x^{e-i}$.
Then, for all $\mathfrak f(x) \in \operatorname{Deck}(p)$,
\[
    \mathfrak f(x)=\dfrac{p(x)}{\operatorname{Min}_p(x)}\sum_{i=0}^{e-1} b_i x^{e-1-i},
\]
where $b_0 = 1$, $b_1 = a_1$, $b_2 = a_2+n-1$, and $b_i \in \mathbb Z$ for $i \in \{3,\dots,e-1\}$.
\end{lem}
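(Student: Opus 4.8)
The plan is to show that every $\mathfrak f \in \operatorname{Deck}(p)$ is automatically divisible by the ``repeated part'' $p(x)/\operatorname{Min}_p(x)$, and then to pin down the top three coefficients of the complementary factor by comparing coefficients in two factorizations. First I would record that $\operatorname{Min}_p(x)$ is nothing but the squarefree part (radical) of $p$: writing $p=\prod_j g_j^{k_j}$ with the $g_j\in\mathbb Z[x]$ distinct monic irreducible factors (Gauss's lemma), we have $\operatorname{Min}_p=\prod_j g_j\in\mathbb Z[x]$, monic. Consequently the quotient
\[
r(x)\coloneqq \frac{p(x)}{\operatorname{Min}_p(x)}=\prod_{\lambda\in\Lambda_p}(x-\lambda)^{m_\lambda-1}=\prod_j g_j^{\,k_j-1}
\]
is also monic and lies in $\mathbb Z[x]$, where $m_\lambda$ is the multiplicity of $\lambda$ as a zero of $p$.

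The heart of the argument is the divisibility $r(x)\mid \mathfrak f(x)$, which I would extract from the interlacing hypothesis (ii) of Definition~\ref{dfn:interlacing} together with $\deg\mathfrak f=\deg p-1$. Note that all zeros of $p$ are real, since otherwise the interlacing relation as defined cannot hold and $\operatorname{Deck}(p)$ is empty. Listing the zeros of $p$ in increasing order, a real zero $\lambda$ of multiplicity $m_\lambda$ occupies $m_\lambda$ consecutive positions $\lambda=\lambda_j=\dots=\lambda_{j+m_\lambda-1}$; these sandwich $m_\lambda-1$ consecutive zeros $\mu$ of $\mathfrak f$, each trapped by $\lambda\leqslant\mu\leqslant\lambda$ and hence equal to $\lambda$. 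Thus $\lambda$ is a zero of $\mathfrak f$ of multiplicity at least $m_\lambda-1$, for every $\lambda\in\Lambda_p$ (the argument is insensitive to whether $\lambda$ is rational). Since the factors $(x-\lambda)^{m_\lambda-1}$ are pairwise coprime, their product $r(x)$ divides $\mathfrak f(x)$ over $\mathbb C$, and as $r$ is monic and integral this divisibility holds in $\mathbb Z[x]$.

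It then follows that $\mathfrak f(x)=r(x)\,q(x)$ with $q$ monic of degree $(n-1)-(n-e)=e-1$; writing $q(x)=\sum_{i=0}^{e-1}b_i x^{e-1-i}$ gives $b_0=1$, and dividing the integer polynomial $\mathfrak f$ by the monic integer polynomial $r$ keeps the quotient in $\mathbb Z[x]$, so each $b_i\in\mathbb Z$. To finish, I would match coefficients. From $p=\operatorname{Min}_p\cdot r$, using that $p$ has vanishing $x^{n-1}$-coefficient and $x^{n-2}$-coefficient $-\binom n2$ (the normalization in Definition~\ref{dfn:interlacing}), one reads off the subleading coefficients of $r$, namely $r_1=-a_1$ and $r_2=a_1^2-a_2-\binom n2$. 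Substituting these into the corresponding coefficient comparison for $\mathfrak f=r\cdot q$, and using that $\mathfrak f$ has vanishing $x^{n-2}$-coefficient and $x^{n-3}$-coefficient $-\binom{n-1}{2}$ (condition (i)), yields $b_1=-r_1=a_1$ and, after the simplification $\binom n2-\binom{n-1}{2}=n-1$, $b_2=a_2+n-1$, as claimed.

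I expect the only real obstacle to be the divisibility step $r\mid\mathfrak f$: one must argue carefully that interlacing pins exactly $m_\lambda-1$ zeros of $\mathfrak f$ to each repeated zero $\lambda$ of $p$, and then upgrade ``shared zeros with multiplicity over $\mathbb C$'' to genuine divisibility in $\mathbb Z[x]$, for which the monicity and integrality of $r$ secured in the first step are exactly what is needed. Everything after that is routine coefficient bookkeeping.
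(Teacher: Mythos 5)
Your proof is correct and follows essentially the route the paper intends: the paper states this lemma without proof, remarking only that it follows from the interlacing condition and the fixed top three coefficients in Definition~\ref{dfn:interlacing} (citing its similarity to Lemma 5.4 of \cite{GreavesYatsyna19}), and your argument --- interlacing traps $m_\lambda-1$ zeros of $\mathfrak f$ at each repeated zero $\lambda$ of $p$, monic integral division gives $q\in\mathbb Z[x]$, and coefficient comparison yields $b_1=a_1$ and $b_2=a_2+n-1$ --- is precisely the omitted verification.
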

Lemma~\ref{lem:coeff_of_interlacepoly} above enables us to use the polynomial enumeration algorithm of \cite[Section 2.3]{GSY21} to construct $\operatorname{Deck}(p)$, similar to what we did in \cite{GSY21, GSYdim1718}.

We write $\mathbf x \geqslant \mathbf 0$ to indicate that all entries of the vector $\mathbf x$ are nonnegative.
In the following corollary, note that the polynomial $p(x)$ divides $\operatorname{Min}_p(x)p^\prime (x)$ and $\operatorname{Min}_p(x)\mathfrak f(x)$ for each $\mathfrak f(x) \in \operatorname{Deck}(p)$.

\begin{cor}
\label{cor:infeasibility}
Let $p(x)$ be a polynomial where $\operatorname{Min}_p(x)$ has degree $e$.
Suppose there exists a vector $\mathbf c \in \mathbb R^{e}$ such that
\[
\operatorname{Coeff}\left (\left \{ \dfrac{\operatorname{Min}_p(x) \mathfrak f(x) }{p(x)} \; : \; \mathfrak f(x) \in \operatorname{Deck}(p) \right \}\right )\mathbf{c} \geqslant\mathbf{0} \quad \text{and} \quad \operatorname{Coeff} \left ( \left \{\dfrac{\operatorname{Min}_p(x)p^\prime(x)}{p(x)}  \right \} \right )\mathbf{c} <0.
\]
Then $p(x)$ is not the characteristic polynomial of any Seidel matrix.
\end{cor}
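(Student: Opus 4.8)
The statement is a Farkas-type infeasibility certificate, and the plan is to deduce it from Lemma~\ref{lem:interlacingconfig} by a short separating-hyperplane argument, once the objects appearing in the statement are shown to be genuine polynomials of the right degree.

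First I would justify the two divisibility claims in the remark preceding the corollary, so that the coefficient matrices in the statement are well defined. By Lemma~\ref{lem:coeff_of_interlacepoly}, every $\mathfrak f(x) \in \operatorname{Deck}(p)$ is already divisible by $p(x)/\operatorname{Min}_p(x)$, so that $\operatorname{Min}_p(x)\mathfrak f(x)/p(x)$ is a polynomial of degree $e-1$. For $p^\prime(x)$ I would use the logarithmic-derivative identity: writing $p(x)=\prod_\lambda (x-\lambda)^{m_\lambda}$ gives $p^\prime(x)/p(x)=\sum_\lambda m_\lambda/(x-\lambda)$, whence $\operatorname{Min}_p(x)p^\prime(x)/p(x)=\sum_\lambda m_\lambda \prod_{\mu\ne\lambda}(x-\mu)$ is a polynomial of degree $e-1$ as well. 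Thus both families appearing in the corollary have coefficient vectors of length $e$, matching the hypothesis $\mathbf c \in \mathbb R^{e}$.

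Next I would argue by contradiction: suppose $p(x)$ is the characteristic polynomial of some Seidel matrix. Then Lemma~\ref{lem:interlacingconfig} supplies a nonnegative integer vector $\mathbf n$ with $\sum_{\mathfrak f} n_{\mathfrak f}\,\mathfrak f(x)=p^\prime(x)$. Multiplying this polynomial identity through by the fixed rational function $\operatorname{Min}_p(x)/p(x)$ — a linear operation — and invoking the first step to see that every term lands among the polynomials of degree $e-1$, I obtain
\[
\sum_{\mathfrak f} n_{\mathfrak f}\,\frac{\operatorname{Min}_p(x)\mathfrak f(x)}{p(x)} = \frac{\operatorname{Min}_p(x)p^\prime(x)}{p(x)}.
\]
Passing to coefficient vectors (again a linear map) turns this into the matrix identity $\mathbf n A = \mathbf b$, where $A$ and $\mathbf b$ denote the two coefficient matrices appearing in the corollary. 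Finally I would apply the certificate $\mathbf c$: by associativity $\mathbf b\,\mathbf c = (\mathbf n A)\mathbf c = \mathbf n(A\mathbf c)$, and since $\mathbf n \geqslant \mathbf 0$ together with the hypothesis $A\mathbf c \geqslant \mathbf 0$ makes the right-hand side nonnegative, while the hypothesis $\mathbf b\,\mathbf c < 0$ makes the left-hand side negative, we reach a contradiction. Hence no admissible $\mathbf n$ exists and $p(x)$ is not the characteristic polynomial of any Seidel matrix.

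As for obstacles, there is essentially none of a conceptual nature: this is the easy (soundness) direction of Farkas' lemma, requiring no deep duality theorem, only the observation that a nonnegative combination of the rows of $A$ cannot separate itself from the halfspace $\{\mathbf y : \mathbf y \mathbf c \geqslant 0\}$ while $\mathbf b$ lies strictly on the other side. The only point demanding genuine care is the bookkeeping in the first two steps — verifying that multiplication by $\operatorname{Min}_p(x)/p(x)$ sends both $\operatorname{Deck}(p)$ and $p^\prime(x)$ into polynomials of the common degree $e-1$, so that the passage to length-$e$ coefficient vectors is legitimate and the matrix identity $\mathbf n A = \mathbf b$ holds exactly. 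Everything downstream is a single use of associativity and sign-tracking.
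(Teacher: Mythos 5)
Your proposal is correct and follows essentially the same route as the paper: the paper's proof is a one-line appeal to Lemma~\ref{lem:interlacingconfig}, Lemma~\ref{lem:coeff_of_interlacepoly}, and Farkas' Lemma, and your argument is precisely the unpacked (easy, soundness) direction of that Farkas certificate, with the degree bookkeeping for $\operatorname{Min}_p(x)\mathfrak f(x)/p(x)$ and $\operatorname{Min}_p(x)p'(x)/p(x)$ correctly verified.
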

\begin{proof}
By Lemma~\ref{lem:interlacingconfig}, Lemma~\ref{lem:coeff_of_interlacepoly}, and Farkas' Lemma (see \cite[Theorem 4.1]{GSY21}), it follows that $p(x)$ does not have an interlacing configuration.
\end{proof}

We call the vector $\mathbf c$ from Corollary~\ref{cor:infeasibility} a \textbf{certificate of infeasibility} for $p(x)$.
In this paper, certificates of infeasibility are written as tuples.
Now we can rule out $39$ of the $44$ candidate characteristic polynomials of Theorem~\ref{thm:candpols}.

\begin{lem}
\label{lem:firstTable}
There does not exist a Seidel matrix $S$ whose characteristic polynomial is equal to any of the polynomials in Table~\ref{tab:39polydim18}.
\end{lem}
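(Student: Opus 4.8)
The plan is to prove Lemma~\ref{lem:firstTable} by applying Corollary~\ref{cor:infeasibility} separately to each of the $39$ polynomials $p(x)$ listed in Table~\ref{tab:39polydim18}. For a fixed such $p(x)$, it suffices to exhibit a single certificate of infeasibility: a vector $\mathbf{c} \in \mathbb{R}^{e}$ (where $\operatorname{Min}_p$ has degree $e$) for which the coefficient matrix $C\left(\left\{ \operatorname{Min}_p(x)\mathfrak{f}(x)/p(x) : \mathfrak{f}(x) \in \operatorname{Deck}(p) \right\}\right)$ times $\mathbf{c}$ is entrywise nonnegative while $C\left(\{ \operatorname{Min}_p(x)p^\prime(x)/p(x) \}\right)\mathbf{c}$ is strictly negative. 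Once such a $\mathbf{c}$ is produced, Corollary~\ref{cor:infeasibility} immediately rules out $p(x)$ as a characteristic polynomial of any Seidel matrix, and ranging over all $39$ entries completes the proof. This mirrors the method employed in \cite{GSY21, GSYdim1718}.

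The first concrete step is, for each $p(x)$, to construct the full set $\operatorname{Deck}(p)$ of interlacing characteristic polynomials. By Lemma~\ref{lem:coeff_of_interlacepoly} every $\mathfrak{f}(x) \in \operatorname{Deck}(p)$ factors as $\frac{p(x)}{\operatorname{Min}_p(x)}\sum_{i=0}^{e-1} b_i x^{e-1-i}$ with $b_0, b_1, b_2$ prescribed and $b_i \in \mathbb{Z}$ for $i \geqslant 3$; the interlacing requirement (ii) together with the type-$2$, weak-type-$2$, and congruence-class conditions (iii)--(iv) of Definition~\ref{dfn:interlacing} bound the remaining coefficients, so the polynomial enumeration algorithm of \cite[Section 2.3]{GSY21} returns $\operatorname{Deck}(p)$ in its entirety. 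With the deck assembled, I would form the coefficient matrix above and the distinguished row coming from $p^\prime(x)$.

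The second step is the search for the certificate itself, which is a linear feasibility problem. By Farkas' lemma, a vector $\mathbf{c}$ satisfying the two displayed inequalities exists precisely when equation~\eqref{eqn:sumofsubpolyVec} admits no nonnegative solution $\mathbf{n}$, i.e., when $p(x)$ has no interlacing configuration. Solving the associated linear program over $\mathbb{Q}$ therefore either certifies infeasibility or exhibits a genuine interlacing configuration. For each of the $39$ polynomials the former occurs, and I would record the resulting $\mathbf{c}$ as a tuple in Table~\ref{tab:39polydim18}, so that the correctness of the lemma reduces to a finite, mechanically verifiable check of two rational inequalities per entry.

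The main obstacle is not the verification, which is routine rational linear algebra, but rather the \emph{provably exhaustive} construction of $\operatorname{Deck}(p)$: were even one interlacing characteristic polynomial omitted, the matrix $C(\operatorname{Deck}(p))$ would have too few rows and a spurious certificate could emerge. The delicate point is thus guaranteeing that the enumeration terminates with the complete deck, which is ensured by the coefficient bounds of Lemma~\ref{lem:coeff_of_interlacepoly} and the interlacing inequalities pinning down each $\mathfrak{f}(x)$. Once completeness is secured, the tabulated certificates give rigorous proofs of infeasibility for all $39$ polynomials simultaneously, leaving only the polynomials in parts (ii) and (iii) of Theorem~\ref{thm:candpols} to be treated by the stronger methods of the later sections.
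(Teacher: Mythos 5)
Your proposal is correct and matches the paper's approach exactly: the paper's proof simply observes that each of the $39$ polynomials in Table~\ref{tab:39polydim18} is listed alongside a certificate of infeasibility, so Corollary~\ref{cor:infeasibility} rules out each one. Your additional remarks on constructing $\operatorname{Deck}(p)$ exhaustively via Lemma~\ref{lem:coeff_of_interlacepoly} and on finding the certificates by linear programming are exactly the (implicit) machinery the authors rely on from \cite{GSY21, GSYdim1718}.
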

\begin{proof}
    Each polynomial in Table~\ref{tab:39polydim18} is listed together with a certificate of infeasibility.
\end{proof}

Now we continue by ruling out the candidate characteristic polynomials from part (ii) of Theorem~\ref{thm:candpols} using techniques from \cite{GSYdim1718}.

\subsection{Multiplicity of interlacing characteristic polynomials}

The next lemma puts restrictions on the entries of an interlacing configuration.

\begin{lem}[{\cite[Lemma 6.2]{GSYdim1718}}] \label{lem:extracting}
Let $M$ be a real symmetric matrix of order $n$ with eigenvalue $\lambda$ of multiplicity $m$.
Suppose there exists a $k$-subset $\mathcal{I}$ of $\{1,\dots,n\}$ such that, for each $i \in \mathcal{I}$, the principal submatrix $M[\overline{\{i\}}]$ has an eigenvalue $\lambda$ of multiplicity $m$ or $m+1$.
Then $M$ has a principal submatrix of order $n-k$ with eigenvalue $\lambda$ of multiplicity at least $m$.
\end{lem}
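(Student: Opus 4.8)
The plan is to reduce the statement to a coordinate-wise characterization of which single-vertex deletions preserve the multiplicity of $\lambda$, and then to delete the relevant coordinates simultaneously. Throughout, set $A \coloneqq M - \lambda I$, so that $A$ is symmetric with $\rank A = n - m$, and let $V \coloneqq \ker A$ denote the $\lambda$-eigenspace of $M$, so that $\dim V = m$. By Cauchy's interlacing theorem (Theorem~\ref{thm:cauchyinterlace}), deleting a single row and column from $M$ changes the multiplicity of $\lambda$ by at most one; hence each order-$(n-1)$ principal submatrix $M[\overline{\{i\}}]$ has $\lambda$ with multiplicity $m-1$, $m$, or $m+1$, and the hypothesis concerns exactly those $k$ indices $i$ for which this multiplicity does not drop below $m$.

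First I would establish the key claim: for an index $i$, the multiplicity of $\lambda$ in $M[\overline{\{i\}}]$ is at least $m$ if and only if every eigenvector $v \in V$ satisfies $v_i = 0$. The reverse direction is immediate, since if every $v \in V$ vanishes in coordinate $i$, then deleting that coordinate embeds $V$ into the $\lambda$-eigenspace of $M[\overline{\{i\}}]$, producing $m$ linearly independent eigenvectors. The forward direction, which I expect to be the main obstacle, is the real content. Here I would argue by contrapositive: if some $v \in V$ has $v_i \neq 0$, then the relation $Av = 0$ exhibits column $i$ of $A$ as a linear combination of the remaining columns, so deleting column $i$ preserves the rank; since $A$ is symmetric, the same holds for row $i$, and therefore $\rank A[\overline{\{i\}}] = \rank A = n - m$. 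Consequently the multiplicity of $\lambda$ in $M[\overline{\{i\}}]$ equals $(n-1) - (n-m) = m - 1$, which is strictly less than $m$. This rank-preservation step is the crux of the argument, and it is precisely what rules out the tempting but false scenario in which a deletion keeps the multiplicity at $m$ while some eigenvector is supported on the deleted coordinate.

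With the claim in hand, the conclusion follows by a simultaneous restriction. Let $T$ be the set of the $k$ indices whose deletion yields multiplicity $m$ or $m+1$; by the claim, every $v \in V$ vanishes on every coordinate in $T$, so $V$ is contained in the coordinate subspace $\{x : x_i = 0 \text{ for all } i \in T\}$. Restricting each $v \in V$ to the coordinates indexed by $\overline{T}$ is then injective and lands in the $\lambda$-eigenspace of $M[\overline{T}]$, exactly as in the reverse direction above. This furnishes $m$ linearly independent eigenvectors of $M[\overline{T}]$ for $\lambda$, so the order-$(n-k)$ principal submatrix $M[\overline{T}]$ has $\lambda$ with multiplicity at least $m$, as required.
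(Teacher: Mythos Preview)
The paper does not actually prove this lemma; it is quoted verbatim from \cite[Lemma~6.2]{GSYdim1718} and used as a black box, so there is no in-paper proof to compare against. Your argument is correct and is essentially the standard one: characterise the ``good'' indices $i$ as those on which the entire $\lambda$-eigenspace vanishes, and then restrict all eigenvectors simultaneously to the complement.

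One expository point worth tightening is the rank step in the forward direction of your key claim. You observe that $Av=0$ with $v_i\neq 0$ makes column $i$ of $A$ redundant, and by symmetry row $i$ is redundant, and then conclude $\rank A[\overline{\{i\}}]=\rank A$. That ``therefore'' is doing real work: knowing separately that deleting row $i$ preserves rank and that deleting column $i$ preserves rank does not automatically give that deleting both does. The fix is one sentence: after removing row $i$ (rank preserved), the column relation $\sum_j v_j A_{\cdot,j}=0$ restricts to the remaining rows, so column $i$ is still in the span of the other columns of the $(n-1)\times n$ matrix and can be removed without dropping rank. With that made explicit, the proof is complete.
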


Now we employ Lemma~\ref{lem:extracting} to rule out the existence of a Seidel matrix corresponding to either of two of the candidate characteristic polynomials from part (ii) of Theorem~\ref{thm:candpols}.

\begin{lem} \label{lem:last5_fourint}
There does not exist a Seidel matrix $S$ with characteristic polynomial
$$
\operatorname{Char}_S(x) = (x+5)^{42} (x-9)^3 (x-11)^6 (x-13)^9.
$$
\end{lem}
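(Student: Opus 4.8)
The plan is to show that the interlacing-configuration conditions of Lemma~\ref{lem:interlacingconfig}, augmented with a multiplicity bound coming from Lemma~\ref{lem:extracting}, are jointly infeasible. Write $p(x) = (x+5)^{42}(x-9)^3(x-11)^6(x-13)^9$, so that $\operatorname{Min}_p(x) = (x+5)(x-9)(x-11)(x-13)$ has degree $e = 4$ and any putative $S$ has order $n = 60$. By Lemma~\ref{lem:coeff_of_interlacepoly}, each $\mathfrak f \in \operatorname{Deck}(p)$ has the form $\frac{p(x)}{\operatorname{Min}_p(x)}\, q(x)$ with $q$ of degree $3$ and its top two coefficients fixed; the first step is therefore to run the polynomial-enumeration algorithm of \cite{GSY21} to list $\operatorname{Deck}(p)$ explicitly and to record, for each $\mathfrak f$, the multiplicity of $-5$ as a root.

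Second, I would localise at the dominant eigenvalue $\lambda = -5$, which has multiplicity $m = 42$. By Cauchy interlacing (Theorem~\ref{thm:cauchyinterlace}) every order-$59$ principal submatrix of $S$ has $-5$ with multiplicity $41$, $42$, or $43$. However, the normalisations $b_1 = 0$ and $b_2 = -\binom{59}{2}$ imposed in Definition~\ref{dfn:interlacing}(i) encode, for an order-$59$ Seidel matrix, that its eigenvalues sum to $0$ and that the sum of their squares equals $59 \cdot 58$. A short power-mean computation then shows that a totally-real polynomial with $-5$ of multiplicity $42$ (resp.\ $43$) and these fixed low-order coefficients would force its remaining $17$ (resp.\ $16$) roots to sum to $210$ with too small a sum of squares, which is impossible. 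Hence every $\mathfrak f \in \operatorname{Deck}(p)$ carries $-5$ with multiplicity exactly $41$, so every order-$59$ submatrix strictly drops the multiplicity of $-5$.

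Third, this is where Lemma~\ref{lem:extracting} enters, applied not to $-5$ (for which the extraction is vacuous) but to one of the eigenvalues $9$, $11$, $13$, which have smaller multiplicity yet larger magnitude and so require comparatively large order to be supported at high multiplicity. Writing $k$ for the number of order-$59$ submatrices that preserve the multiplicity of the chosen $\lambda$, Lemma~\ref{lem:extracting} produces a principal submatrix, itself a Seidel matrix of order $60 - k$, carrying $\lambda$ at its full multiplicity in $S$. Bounding below the order of a Seidel matrix that can support $\lambda$ at that multiplicity, again through the two trace identities, yields an explicit inequality $\sum_{\mathfrak f \in \mathcal D} n_{\mathfrak f} \le k_0$ for a suitable subset $\mathcal D \subseteq \operatorname{Deck}(p)$. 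I would then adjoin this inequality to the linear system $\mathbf n \geqslant \mathbf 0$, $\mathbf n\, C(\operatorname{Deck}(p)) = C(p')$ of \eqref{eqn:sumofsubpolyVec} and exhibit an explicit Farkas-type certificate, exactly as in Corollary~\ref{cor:infeasibility}, witnessing that the augmented system has no nonnegative real solution, thereby ruling out $S$.

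The main obstacle is precisely this last step. The plain interlacing-configuration polytope is nonempty for $p(x)$ — this is exactly why $p(x)$ was not already eliminated by the certificate-of-infeasibility sieve of Lemma~\ref{lem:firstTable} — so the entire force of the argument rests on choosing the eigenvalue and the extraction bound so that the single extra inequality $\sum_{\mathfrak f \in \mathcal D} n_{\mathfrak f} \leqslant k_0$ cuts the polytope down to the empty set. Verifying this requires matching the coefficient equations of \eqref{eqn:sumofsubpolyVec} to check that every feasible configuration is forced to place more than $k_0$ of its mass on the polynomials in $\mathcal D$, and then producing the dual certificate. The bookkeeping over the explicitly enumerated $\operatorname{Deck}(p)$ is routine once the correct inequality is in hand, but isolating that inequality is the crux.
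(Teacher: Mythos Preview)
Your overall strategy---enumerate $\operatorname{Deck}(p)$, invoke Lemma~\ref{lem:extracting} at one of the small-multiplicity eigenvalues, and finish with a trace contradiction on the extracted Seidel submatrix---is exactly the paper's. But two things go wrong in your execution.

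First, you overestimate the difficulty. Here $\operatorname{Deck}(p)$ has only two elements,
\[
\mathfrak f_1(x)=(x+5)^{41}(x-6)(x-9)^2(x-11)^7(x-13)^8,\qquad
\mathfrak f_2(x)=(x+5)^{41}(x-9)^3(x-11)^5(x-13)^8(x^2-19x+82),
\]
and the linear system \eqref{eqn:sumofsubpolyVec} has a \emph{unique} nonnegative solution $(n_{\mathfrak f_1},n_{\mathfrak f_2})=(28,32)$. There is no polytope to cut down and no Farkas certificate to produce; one simply reads off $k$ and tests the extracted submatrix directly.

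Second, and more seriously, your step~3 as written does not close. Taking $\lambda=11$ (the only useful choice: for $\lambda=13$ both $\mathfrak f_i$ drop the multiplicity, so $k=0$; for $\lambda=9$ the resulting bound is far too slack), Lemma~\ref{lem:extracting} with $m=6$ gives $k=n_{\mathfrak f_1}$ and an order $60-k$ principal Seidel submatrix $T$ with $11$ of multiplicity at least $6$. Using only ``$\lambda$ at that multiplicity'' together with $\operatorname{tr}T=0$ and $\operatorname{tr}T^2=(60-k)(59-k)$, the Cauchy--Schwarz/power-mean bound you describe yields merely $60-k\geqslant 31$, i.e.\ $k\leqslant 29$. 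Since the unique configuration has $k=28$, your adjoined inequality does \emph{not} exclude it, and the augmented system remains feasible.

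The missing ingredient is precisely the piece of information you set up in step~2 and then discard as ``vacuous'': the large eigenvalue $-5$. It is not used via the extraction lemma but via plain Cauchy interlacing (Theorem~\ref{thm:cauchyinterlace}) applied $28$ times to $S$: any order-$32$ principal submatrix of $S$ must have $-5$ with multiplicity at least $42-28=14$. Hence $\operatorname{Char}_T(x)$ is divisible by $(x+5)^{14}(x-11)^6$, and now
\[
\operatorname{tr}T^2 = 32\cdot 31 = 992 \ < \ 14\cdot 25 + 6\cdot 121 = 1076
\]
gives the contradiction. So the extra cut you need is not on $\sum_{\mathfrak f\in\mathcal D} n_{\mathfrak f}$ alone, but comes from combining the extracted $\lambda$-multiplicity with the interlaced $(-5)$-multiplicity in the \emph{same} submatrix.
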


\begin{proof}
Suppose a Seidel matrix $S$ has characteristic polynomial
$$
\operatorname{Char}_S(x) = (x+5)^{42} (x-9)^3 (x-11)^6 (x-13)^9.
$$
Then $\operatorname{Deck}(\operatorname{Char}_S) = \{\mathfrak f_1(x),\mathfrak f_{2}(x)\}$, where
\begin{align*}
    \mathfrak{f}_1(x) &= (x+5)^{41} (x-6) (x-9)^2 (x-11)^7 (x-13)^8, \\
    \mathfrak{f}_2(x) &= (x+5)^{41} (x-9)^3 (x-11)^5 (x-13)^8 (x^2-19x+82).
\end{align*}
and we find that there is only one possible interlacing configuration $(n_{\mathfrak f_1},n_{\mathfrak f_2}) = (28,32)$.
By Lemma~\ref{lem:extracting}, the Seidel matrix $S$ has a principal submatrix $T$ of order 32 with eigenvalue $11$ of multiplicity at least $6$.
Furthermore, by Theorem~\ref{thm:cauchyinterlace}, we also have that $\operatorname{Char}_{T}(x)$ is divisible by $(x+5)^{14}$.
Therefore, $\operatorname{Char}_{T}(x)$ is divisible by $(x+5)^{14} (x-11)^6$.
This is a contradiction since $\tr T^2 = 32 \cdot 31 = 992 < 1076 = 14 \cdot (-5)^2 + 6 \cdot 11^2$.
\end{proof}

\begin{lem} \label{lem:last5_ev17}
There does not exist a Seidel matrix $S$ with characteristic polynomial
$$
\operatorname{Char}_S(x) = (x+5)^{42} (x-11)^{14} (x-13)^3 (x-17).
$$
\end{lem}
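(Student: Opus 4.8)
The plan is to follow the template of Lemma~\ref{lem:last5_fourint}: compute the deck $\operatorname{Deck}(\operatorname{Char}_S)$, determine the interlacing configuration, extract a small principal submatrix via Lemma~\ref{lem:extracting}, and obtain a contradiction from its trace. Here $\operatorname{Min}_{\operatorname{Char}_S}(x) = (x+5)(x-11)(x-13)(x-17)$ has degree $e=4$, so by Lemma~\ref{lem:coeff_of_interlacepoly} every $\mathfrak f(x) \in \operatorname{Deck}(\operatorname{Char}_S)$ has the form
\[
\mathfrak f(x) = (x+5)^{41}(x-11)^{13}(x-13)^2\,(x^3 - 36 x^2 + 405 x + b_3), \qquad b_3 \in \mathbb Z,
\]
the top three coefficients of the cubic being forced by Lemma~\ref{lem:coeff_of_interlacepoly}. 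Requiring $\mathfrak f$ to interlace $\operatorname{Char}_S$ confines the three roots of the cubic to the intervals $[-5,11]$, $[11,13]$, $[13,17]$; evaluating the cubic at $-5,11,13,17$ then restricts $b_3$ to $-1394 \leqslant b_3 \leqslant -1378$. I expect the type-$2$ and $\mathscr P_{59,7}$ conditions of Definition~\ref{dfn:interlacing} to eliminate all but the two boundary values, leaving
\begin{align*}
\mathfrak f_1(x) &= (x+5)^{41}(x-11)^{13}(x-13)^2(x-17)(x^2 - 19x + 82), \\
\mathfrak f_2(x) &= (x+5)^{41}(x-11)^{13}(x-13)^3(x^2 - 23x + 106),
\end{align*}
coming from $b_3 = -1394$ and $b_3 = -1378$ respectively.

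To pin down the interlacing configuration I use \eqref{eqn:sumofsubpolyVec}. Since $\operatorname{Char}_S'(x)$ and every $\mathfrak f(x)$ share the common factor $(x+5)^{41}(x-11)^{13}(x-13)^2$, dividing it out reduces \eqref{eqn:sumofsubpolyVec} to an identity among monic cubics. As the two deck cubics agree in their top three coefficients, the leading coefficients give $n_{\mathfrak f_1} + n_{\mathfrak f_2} = 60$, the $x^2$- and $x$-coefficient equations are then automatically satisfied, and the only remaining constraint is the constant-term equation $-1394\,n_{\mathfrak f_1} - 1378\,n_{\mathfrak f_2} = -83112$. This forces the unique configuration $(n_{\mathfrak f_1}, n_{\mathfrak f_2}) = (27, 33)$.

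Finally I extract at the eigenvalue $13$, which has multiplicity $m=3$ in $S$. Only $\mathfrak f_2$ carries $(x-13)^3$, and multiplicity $m+1=4$ is impossible, since a double root of the cubic at $13$ would force its linear coefficient to equal $429 \neq 405$; hence exactly $k = n_{\mathfrak f_2} = 33$ members of the deck have $13$ with multiplicity $m$ or $m+1$. By Lemma~\ref{lem:extracting}, $S$ has a principal submatrix $T$ of order $60-33 = 27$ with eigenvalue $13$ of multiplicity at least $3$. Since $T$ arises from $S$ by deleting $33$ rows and columns, Theorem~\ref{thm:cauchyinterlace} forces $-5$ to have multiplicity at least $42 - 33 = 9$ in $T$. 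Thus $(x+5)^9(x-13)^3 \mid \operatorname{Char}_T(x)$, so that $\tr T^2 \geqslant 9 \cdot 25 + 3 \cdot 169 = 732$, contradicting $\tr T^2 = 27 \cdot 26 = 702$.

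The delicate point is the choice of extraction eigenvalue. Extracting at $17$ instead yields only an order-$33$ submatrix, whose forced spectrum does not overload the trace; the winning move is to extract at the triple eigenvalue $13$, where the large configuration entry $k = 33$ produces a submatrix small enough (order $27$) that the surviving multiplicity of $-5$ alone already exceeds what the trace permits. Accordingly, I expect the main obstacle to be the computational step of rigorously confirming that the type-$2$ and $\mathscr P_{59,7}$ filters leave exactly $\mathfrak f_1$ and $\mathfrak f_2$, so that the configuration $(27,33)$ is indeed forced.
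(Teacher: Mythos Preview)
Your overall strategy matches the paper's, but there is a genuine gap: the deck has \emph{three} elements, not two. In addition to your $\mathfrak f_1$ ($b_3=-1394$) and $\mathfrak f_2$ ($b_3=-1378$), the value $b_3=-1382$ also survives all of the filters in Definition~\ref{dfn:interlacing}, giving
\[
\mathfrak f_3(x)=(x+5)^{41}(x-11)^{13}(x-13)^2\bigl(x^3-36x^2+405x-1382\bigr).
\]
One checks directly that the cubic takes the signs $-,+,-,+$ at $-5,11,13,17$, so interlacing holds; the paper confirms that the weakly type~$2$ and $\mathscr P_{59,7}$ conditions hold as well. Your expectation that only the two boundary values of $b_3$ remain is therefore incorrect.

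This breaks the rest of your argument. With three deck elements the configuration equations reduce to $n_{\mathfrak f_1}+n_{\mathfrak f_2}+n_{\mathfrak f_3}=60$ and $4n_{\mathfrak f_1}+n_{\mathfrak f_3}=108$, which has a one-parameter family of nonnegative integer solutions with $n_{\mathfrak f_2}$ ranging over $\{0,3,6,\dots,33\}$. Since only $\mathfrak f_2$ carries $(x-13)^3$, the extraction count at the eigenvalue $13$ is $k=n_{\mathfrak f_2}$, and your trace contradiction requires $k=33$; for smaller $k$ it fails. The paper closes this gap with an extra step you are missing: it shows that $\mathfrak f_3$ itself is not the characteristic polynomial of any Seidel matrix, by exhibiting a certificate of infeasibility $(51115513410,0,0,6301221,484710,37285)$ for $\mathfrak f_3$ (whose own deck has $105$ elements). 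This forces $n_{\mathfrak f_3}=0$ in any configuration arising from an actual Seidel matrix, pinning down $(n_{\mathfrak f_1},n_{\mathfrak f_2},n_{\mathfrak f_3})=(27,33,0)$, after which your extraction-at-$13$ argument goes through verbatim.
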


\begin{proof}
Suppose a Seidel matrix $S$ has characteristic polynomial
$$
\operatorname{Char}_S(x) = (x+5)^{42} (x-11)^{14} (x-13)^3 (x-17).
$$
Then $\operatorname{Deck}(\operatorname{Char}_S) = \{\mathfrak f_1(x),\mathfrak f_{2}(x), \mathfrak f_{3}(x)\}$, where
\begin{align*}
    \mathfrak{f}_1(x) &= (x+5)^{41} (x-11)^{13} (x-13)^3 (x^2-23x+106), \\
    \mathfrak{f}_2(x) &= (x+5)^{41} (x-11)^{13} (x-13)^2 (x^3-36x^2+405x-1382), \\
    \mathfrak{f}_3(x) &= (x+5)^{41} (x-11)^{13} (x-13)^2 (x-17) (x^2-19x+82).
\end{align*}
The polynomial $\mathfrak{f}_2(x)$ has $105$ interlacing characteristic polynomials but it has a corresponding certificate of infeasibility $(51115513410, 0, 0, 6301221, 484710, 37285)$.
Therefore, there does not exist a Seidel matrix with characteristic polynomial $\mathfrak{f}_2(x)$.
Furthermore, there is only one possible interlacing configuration $(n_{\mathfrak{f}_1},n_{\mathfrak{f}_2},n_{\mathfrak{f}_3})=(33,0,27)$ where $n_{\mathfrak{f}_2}=0$.
By Lemma~\ref{lem:extracting}, the Seidel matrix $S$ has a principal submatrix $T$ of order 27 with eigenvalue $13$ of multiplicity at least 3.
Furthermore, by Theorem~\ref{thm:cauchyinterlace}, we also have that $\operatorname{Char}_{T}(x)$ is divisible by $(x+5)^9$.
Therefore, $\operatorname{Char}_{T}(x)$ is divisible by $(x+5)^9(x-13)^3$.
This is a contradiction since $\tr T^2 = 27 \cdot 26 = 702 < 732 = 9 \cdot (-5)^2 + 3 \cdot 13^2$.
\end{proof}

\subsection{Warranted polynomials}

In this section, we will use the notion of a \emph{warranted} polynomial.
Let $p(x)$ be a polynomial.
Suppose that $p(x)$ has at least one interlacing configuration and $\operatorname{Min}_p(x)$ has degree $e$.
We say that $\mathfrak f(x) \in \operatorname{Deck}(p)$ is $p(x)$\textbf{-warranted} if the $\mathfrak f(x)$-entry of every interlacing configuration for $p(x)$ is positive.

Let $\mathfrak f(x) \in \operatorname{Deck}(p)$.
By Farkas' Lemma (see \cite[Theorem 4.1]{GSY21}), if there exists $\mathbf c \in \mathbb R^e$ such that the $\mathfrak h(x)$-entry of
\[
\operatorname{Coeff}\left ( \left \{ \dfrac{\operatorname{Min}_p(x) \mathfrak h(x) }{p(x)} \;  : \; \mathfrak h(x) \in \operatorname{Deck}(p) \right \}\right )\mathbf{c}
\]
is negative for $\mathfrak h(x) = \mathfrak f(x)$, nonnegative for $\mathfrak h(x) \in \operatorname{Deck}(p) \backslash \{\mathfrak f(x)\}$, and   
\[
 \operatorname{Coeff} \left ( \left \{\dfrac{\operatorname{Min}_p(x)p^\prime(x)}{p(x)}  \right \} \right )\mathbf{c} <0,
\]
then $\mathfrak f(x)$ is $p(x)$-warranted.
The vector $\mathbf c$ is called the \textbf{certificate of warranty} for $\mathfrak f(x)$.
In this paper, certificates of warranty are written as tuples.
The next lemma follows directly from the definition of warranted polynomials.

\begin{lem}\label{lem:warranted}
Let $S$ be a Seidel matrix of order $n$.
Suppose that $\mathfrak f(x) \in \operatorname{Deck}(\operatorname{Char}_S)$ is $\operatorname{Char}_S(x)$-warranted.
Then there exists $i \in \{1,\dots,n\}$ such that $\operatorname{Char}_{S[i]}(x) = \mathfrak f(x)$.
\end{lem}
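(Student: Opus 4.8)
The plan is to identify the interlacing configuration of Lemma~\ref{lem:interlacingconfig} that is actually realized by the principal submatrices of $S$, and then invoke warranted-ness. The starting point is the classical identity underlying Lemma~\ref{lem:interlacingconfig}: for any real symmetric matrix $S$ of order $n$, writing $S[i]$ for the principal submatrix $S[\overline{\{i\}}]$ of order $n-1$, the derivative of the characteristic polynomial decomposes as $\operatorname{Char}_S'(x) = \sum_{i=1}^n \operatorname{Char}_{S[i]}(x)$. Thus $p'(x)$, with $p = \operatorname{Char}_S$, is literally the sum of the $n$ characteristic polynomials of the order-$(n-1)$ principal submatrices of $S$.

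First I would verify that each summand $\operatorname{Char}_{S[i]}(x)$ lies in $\operatorname{Deck}(p)$, i.e.\ is an interlacing characteristic polynomial in the sense of Definition~\ref{dfn:interlacing}. Condition (ii) is immediate from Cauchy's interlacing theorem (Theorem~\ref{thm:cauchyinterlace}). Condition (i) is a trace computation: since $S[i]$ is itself a Seidel matrix of order $n-1$ it has zero diagonal, so $b_1 = 0$, and $\tr (S[i])^2 = (n-1)(n-2)$ forces $b_2 = -\binom{n-1}{2}$. Conditions (iii) and (iv) hold because every principal submatrix of a Seidel matrix is again a Seidel matrix, so the type~2 and weakly type~2 properties follow from the cited Lemmas~2.7 and 2.8 of \cite{GSY21}, and when $n-1$ is odd the polynomial $\operatorname{Char}_{S[i]}(x)$ automatically lies in a congruence class of $\mathscr P_{n-1,7}$.

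Having established membership, I would group the $n$ summands by their distinct values: for each $\mathfrak g(x) \in \operatorname{Deck}(p)$ let $m_{\mathfrak g}$ be the number of indices $i$ with $\operatorname{Char}_{S[i]}(x) = \mathfrak g(x)$. These nonnegative integers satisfy $\sum_{\mathfrak g} m_{\mathfrak g}\,\mathfrak g(x) = p'(x)$, so the vector $\mathbf m$ is a bona fide interlacing configuration for $p(x)$. Since $\mathfrak f(x)$ is $\operatorname{Char}_S(x)$-warranted, its entry in \emph{every} interlacing configuration is positive; applying this to $\mathbf m$ gives $m_{\mathfrak f} > 0$, so there is at least one index $i$ with $\operatorname{Char}_{S[i]}(x) = \mathfrak f(x)$, as required.

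The only genuine content beyond bookkeeping is the verification in the second paragraph that each deletion submatrix's characteristic polynomial satisfies the arithmetic conditions (iii) and (iv) of Definition~\ref{dfn:interlacing}; these reduce cleanly to the fact that principal submatrices of Seidel matrices are Seidel matrices together with the already-cited structural results. Once that membership is confirmed, the identity for $p'(x)$ produces a concrete interlacing configuration and the conclusion is forced by the definition of warranted. I therefore expect the lemma to follow directly, consistent with its status as an immediate consequence of the warranted-polynomial definition.
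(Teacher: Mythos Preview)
Your argument is correct and is precisely the unpacking of what the paper means when it says the lemma ``follows directly from the definition of warranted polynomials'': the actual multiplicities $m_{\mathfrak g} = |\{i : \operatorname{Char}_{S[\overline{\{i\}}]}(x) = \mathfrak g(x)\}|$ furnish a concrete interlacing configuration (this is the content behind Lemma~\ref{lem:interlacingconfig}), and warranted-ness forces $m_{\mathfrak f} > 0$. Your explicit verification that each $\operatorname{Char}_{S[\overline{\{i\}}]}(x)$ satisfies conditions (i)--(iv) of Definition~\ref{dfn:interlacing} is more detail than the paper supplies, but it is exactly the right justification and there is nothing to add.
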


Now we employ Lemma~\ref{lem:warranted} to rule out the existence of a Seidel matrix corresponding to one of the candidate characteristic polynomials from part (ii) of Theorem~\ref{thm:candpols}.

\begin{lem} \label{lem:last5_fiveint}
There does not exist a Seidel matrix $S$ with characteristic polynomial
$$
\operatorname{Char}_S(x) = (x+5)^{42} (x-9)^2 (x-11)^9 (x-13)^6 (x-15).
$$
\end{lem}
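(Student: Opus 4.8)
The plan is to follow the same strategy used in Lemma~\ref{lem:last5_fourint} and Lemma~\ref{lem:last5_ev17}: first compute $\operatorname{Deck}(\operatorname{Char}_S)$, then determine the possible interlacing configurations, and finally derive a contradiction from a trace inequality after locating a sufficiently structured principal submatrix. The new ingredient here, compared with the previous two lemmas, is that we invoke Lemma~\ref{lem:warranted} on warranted polynomials rather than Lemma~\ref{lem:extracting}. First I would enumerate $\operatorname{Deck}(\operatorname{Char}_S)$ using Lemma~\ref{lem:coeff_of_interlacepoly} and the polynomial enumeration algorithm: since $\operatorname{Min}_p(x) = (x+5)(x-9)(x-11)(x-13)(x-15)$ has degree $e=5$, each interlacing polynomial $\mathfrak f(x)$ is forced to have the shape $(x+5)^{41}(x-9)^{a}(x-11)^{b}(x-13)^{c}(x-15)^{d} q(x)$ with a quadratic or cubic residual factor $q(x)$, and the top three coefficients of the degree-$4$ cofactor are pinned down by Lemma~\ref{lem:coeff_of_interlacepoly}.

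Having listed $\operatorname{Deck}(\operatorname{Char}_S) = \{\mathfrak f_1(x), \dots, \mathfrak f_k(x)\}$, I would solve the vector equation~\eqref{eqn:sumofsubpolyVec}, namely $\mathbf n\, C(\operatorname{Deck}(\operatorname{Char}_S)) = C(\operatorname{Char}_S')$, over the nonnegative integers to find all interlacing configurations. The point of the warranted-polynomial machinery is that some particular $\mathfrak f_j(x) \in \operatorname{Deck}(\operatorname{Char}_S)$ will have a strictly positive entry in every such configuration, which I would certify by exhibiting a certificate of warranty $\mathbf c \in \mathbb R^5$ as in the displayed inequalities defining warranty. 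By Lemma~\ref{lem:warranted}, this guarantees that $S$ has a principal submatrix $S[i]$ of order $59$ whose characteristic polynomial is exactly $\mathfrak f_j(x)$. I expect the warranted polynomial to be one whose residual factor $q(x)$ forces an eigenvalue (likely $9$ or $15$, the two low-multiplicity eigenvalues) to persist into $S[i]$ with controlled multiplicity.

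From the identified submatrix $S[i]$ of order $59$, I would then iterate: either apply Cauchy interlacing (Theorem~\ref{thm:cauchyinterlace}) directly to bound the multiplicities of the eigenvalues of $S[i]$, or re-run the deck/warranty analysis on $\mathfrak f_j(x)$ itself to peel off a further principal submatrix $T$ of smaller order carrying a prescribed number of copies of an eigenvalue together with a guaranteed power of $(x+5)$. The endgame, exactly as in the two preceding lemmas, is a contradiction via the second-moment (trace) identity: the forced spectrum of $T$ would require $\tr T^2 = |T|(|T|-1)$ to be at least the sum of squares of the guaranteed eigenvalues counted with their forced multiplicities, and this inequality fails.

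The main obstacle, as in all these arguments, is not any single conceptual step but the bookkeeping: correctly enumerating $\operatorname{Deck}(\operatorname{Char}_S)$, verifying that the interlacing configuration is unique (or that the candidate warranted polynomial genuinely appears with positive multiplicity in every configuration), and producing an explicit certificate of warranty $\mathbf c$ that can be checked by hand or machine. The delicate part is the choice of which eigenvalue to track: with five distinct eigenvalues and the low multiplicities $3, 9, 6, 1$ on $9, 11, 13, 15$ respectively, one must pick the warranted polynomial and the subsequent submatrix so that the resulting trace bound is violated by the narrowest possible margin, since the slack between $|T|(|T|-1)$ and the weighted sum of squared eigenvalues is what ultimately yields the contradiction.
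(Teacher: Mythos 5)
Your setup matches the paper's through the key structural move: enumerate $\operatorname{Deck}(\operatorname{Char}_S)$ (the paper finds seven polynomials $\mathfrak f_1,\dots,\mathfrak f_7$), exhibit a certificate of warranty showing that one of them ($\mathfrak f_1(x) = (x+5)^{41}(x-9)(x-11)^8(x-13)^5(x^4-43x^3+673x^2-4529x+11026)$) is $\operatorname{Char}_S(x)$-warranted, and conclude via Lemma~\ref{lem:warranted} that some order-$59$ principal submatrix $S[i]$, itself a Seidel matrix, has characteristic polynomial exactly $\mathfrak f_1(x)$. (A minor slip: the multiplicity of the eigenvalue $9$ is $2$, not $3$.)

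Where you diverge is the endgame, and this is where your plan has a genuine gap. The paper does not extract a further submatrix or invoke any trace inequality here. Instead it computes $\operatorname{Deck}(\mathfrak f_1)$ (which has $208$ elements) and produces a certificate of infeasibility for $\mathfrak f_1(x)$ in the sense of Corollary~\ref{cor:infeasibility}: by Farkas' Lemma, $\mathfrak f_1(x)$ admits no interlacing configuration at all, hence cannot be the characteristic polynomial of any Seidel matrix, contradicting the existence of $S[i]$. Your proposed finish --- peeling off a smaller principal submatrix $T$ via Lemma~\ref{lem:extracting} and violating $\tr T^2 = |T|(|T|-1)$ --- is the mechanism of Lemmas~\ref{lem:last5_fourint} and~\ref{lem:last5_ev17}, but those arguments hinged on the interlacing configuration being \emph{unique}, which pinned down the order and forced eigenvalue multiplicities of $T$. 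With $208$ polynomials in $\operatorname{Deck}(\mathfrak f_1)$ you have no such uniqueness, and you give no reason to expect that any choice of eigenvalue and submatrix yields a trace violation; indeed the fact that the authors resort to a pure infeasibility certificate suggests the trace route does not close. So the final contradiction, the only part of the proof that actually does the work, is asserted rather than established.
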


\begin{proof}
Suppose a Seidel matrix $S$ has characteristic polynomial
$$
\operatorname{Char}_S(x) = (x+5)^{42} (x-9)^2 (x-11)^9 (x-13)^6 (x-15).
$$
Then $\operatorname{Deck}(\operatorname{Char}_S) = \{\mathfrak f_1(x), \dots, \mathfrak f_7(x)\}$, where
\begin{align*}
    \mathfrak{f}_1(x) &= (x+5)^{41} (x-9) (x-11)^8 (x-13)^5 (x^4-43x^3+673x^2-4529x+11026), \\
    \mathfrak{f}_2(x) &= (x+5)^{41} (x-9) (x-11)^8 (x-13)^5 (x^4-43x^3+673x^2-4525x+10966), \\
    \mathfrak{f}_3(x) &= (x+5)^{41} (x-9)^2 (x-11)^8 (x-13)^6 (x^2-21x+94), \\
    \mathfrak{f}_4(x) &= (x+5)^{41} (x-9) (x-11)^8 (x-13)^5 (x^4-43x^3+673x^2-4513x+10818), \\
    \mathfrak{f}_5(x) &= (x+5)^{41} (x-9) (x-11)^9 (x-13)^5 (x^3-32x^2+321x-978), \\
    \mathfrak{f}_6(x) &= (x+5)^{41} (x-9) (x-10) (x-11)^8 (x-13)^6 (x^2-20x+83), \\
    \mathfrak{f}_7(x) &= (x+5)^{41} (x-9) (x-11)^9 (x-13)^6 (x^2-19x+74).
\end{align*}
The polynomial $\mathfrak{f}_1(x)$ is $\operatorname{Char}_S(x)$-warranted with certificate of warranty $$(45911387, 0, 0, 10146, 0)$$ and it has $208$ interlacing characteristic polynomials.
However, we arrive at a contradiction since $\mathfrak{f}_1(x)$ has a corresponding certificate of infeasibility:
$$
(1132367732240930, 0, 0, 71075114863, 6707563763, 649903361, 64522780, 6545789). \qedhere
$$
\end{proof}

\subsection{Seidel-compatible polynomials}

In this section, we introduce the notions of angles and Seidel-compatibility for polynomials.

    Let $\mathfrak f(x) = \dfrac{p(x)}{\operatorname{Min}_p(x)} f(x) \in \operatorname{Deck}(p)$.
    For each $\lambda \in \Lambda_p$, define the \textbf{angle} $\alpha_\lambda(\mathfrak f)$ of $\mathfrak f(x)$ with respect to $\lambda$ as
    \[
    \alpha_\lambda(\mathfrak f) \coloneqq \sqrt{\frac{f(\lambda)}{\operatorname{Min}_p^\prime(\lambda)}}.
    \]
    Now we can introduce the notion of \textit{compatibility} for polynomials with respect to $p(x)$.
    Let $\Sigma_p$ be the set of simple zeros of $p(x)$ and define $\operatorname{Sim}_p(x)$ as 
    \[
    \operatorname{Sim}_p(x) \coloneqq \prod_{\xi \in \Sigma_p} (x-\xi).
    \]
    
Let $\mathfrak f(x)$ and $\mathfrak g(x)$ be distinct polynomials in $\operatorname{Deck}(p)$.
Define $\operatorname{Mult}_p(x) \coloneqq \operatorname{Min}_p(x)/\operatorname{Sim}_p(x)$.
We say that $\mathfrak f(x)$ and $\mathfrak g(x)$ are $p(x)$-\textbf{Seidel-compatible} if there exists $\bm \delta \in \{\pm 1 \}^{\Sigma_p}$ such that 
    \begin{equation}
    \label{eqn:integralitySeidel}
    \sum_{\lambda \in \Sigma_p}  \operatorname{Mult}_p(\lambda)\bm \delta(\lambda) \alpha_{\lambda}(\mathfrak f)\alpha_{\lambda}(\mathfrak g)  \equiv R_p \pmod 2,
    \end{equation}
    where
    \[
    R_p \coloneqq 
    \begin{cases}
          \operatorname{Mult}_p(1) + \operatorname{Mult}_p(0), & \text{ if $\deg p$ is odd; } \\
         \left( \operatorname{Mult}_p(1) - \operatorname{Mult}_p(-1) \right)/2, & \text{ if $\deg p$ is even. }
    \end{cases}
    \]
Any polynomial $p(x)$ is considered to be $p(x)$-Seidel-compatible with itself.
Note that the definition of Seidel compatibility given here is a slight variation of the definition given in \cite{GSYdim1718}.

\begin{lem}\label{lem:compatible}
Let $S$ be a Seidel matrix of order $n$.
Suppose that $\mathfrak f(x) \in \operatorname{Deck}(\operatorname{Char}_S)$ is $\operatorname{Char}_S(x)$-warranted.
Then for all $j \in \{1,\dots,n\}$, the polynomial $\operatorname{Char}_{S[j]}(x)$ is $\operatorname{Char}_S(x)$-Seidel-compatible with $\mathfrak f(x)$.
\end{lem}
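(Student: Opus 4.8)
The plan is to reinterpret the angles $\alpha_\lambda(\mathfrak f)$ as norms of eigenvector projections of $S$, and then to recognise the left-hand side of the Seidel-compatibility condition \eqref{eqn:integralitySeidel} as a single off-diagonal entry of the integer matrix $\operatorname{Mult}_p(S)$, where $p(x)=\operatorname{Char}_S(x)$. First, because $\mathfrak f$ is $\operatorname{Char}_S$-warranted, Lemma~\ref{lem:warranted} supplies an index $i$ with $\operatorname{Char}_{S[i]}(x)=\mathfrak f(x)$. I would then fix $j$ and set $\mathfrak g(x)=\operatorname{Char}_{S[j]}(x)$; since $S[j]$ is a Seidel matrix of order $n-1$ whose characteristic polynomial interlaces $p$, one checks $\mathfrak g\in\operatorname{Deck}(p)$. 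If $\mathfrak g=\mathfrak f$ the claim is just self-compatibility, so I may assume $\mathfrak g\neq\mathfrak f$, forcing $j\neq i$.

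Next I would record the resolvent identity. Cofactor expansion gives $\bigl((xI-S)^{-1}\bigr)_{kk}=\operatorname{Char}_{S[k]}(x)/\operatorname{Char}_S(x)$, while the spectral theorem gives $\bigl((xI-S)^{-1}\bigr)_{kk}=\sum_{\lambda\in\Lambda_p}\lVert P_\lambda e_k\rVert^2/(x-\lambda)$, where $P_\lambda$ is the orthogonal projection onto the $\lambda$-eigenspace of $S$. Comparing residues at each $\lambda$ with the partial-fraction expansion behind the definition of $\alpha_\lambda$ (using $\mathfrak f(x)/p(x)=f(x)/\operatorname{Min}_p(x)$ from Lemma~\ref{lem:coeff_of_interlacepoly}) yields $\alpha_\lambda(\mathfrak f)=\lVert P_\lambda e_i\rVert$ and $\alpha_\lambda(\mathfrak g)=\lVert P_\lambda e_j\rVert$. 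For a simple zero $\lambda\in\Sigma_p$ with unit eigenvector $u_\lambda$ we have $P_\lambda=u_\lambda u_\lambda^{\transpose}$, so the choice $\bm\delta(\lambda)=\operatorname{sign}\bigl((u_\lambda)_i(u_\lambda)_j\bigr)$ (arbitrary when a component vanishes, as then the product of angles is $0$) gives $\bm\delta(\lambda)\alpha_\lambda(\mathfrak f)\alpha_\lambda(\mathfrak g)=(u_\lambda)_i(u_\lambda)_j=(P_\lambda)_{ij}$. Since $\operatorname{Mult}_p(\mu)=0$ at every repeated zero $\mu$, the functional calculus $\operatorname{Mult}_p(S)=\sum_{\lambda\in\Lambda_p}\operatorname{Mult}_p(\lambda)P_\lambda$ collapses to a sum over $\Sigma_p$, so the left-hand side of \eqref{eqn:integralitySeidel} equals $\bigl(\operatorname{Mult}_p(S)\bigr)_{ij}$ exactly.

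It then remains to prove $\bigl(\operatorname{Mult}_p(S)\bigr)_{ij}\equiv R_p\pmod 2$, and this parity computation is where the real work lies. I would first note that $\operatorname{Mult}_p,\operatorname{Sim}_p\in\mathbb Z[x]$, since the roots of any irreducible factor of $p$ share a multiplicity and hence the repeated and simple zeros each form a Galois-stable set; thus $\operatorname{Mult}_p(S)$ is an integer matrix. Because $S\equiv J-I\pmod 2$ entrywise, we get $\operatorname{Mult}_p(S)\equiv\operatorname{Mult}_p(J-I)\pmod 2$. Diagonalising $J-I$ (eigenvalue $n-1$ on $\mathbf 1$, eigenvalue $-1$ on $\mathbf 1^{\perp}$) shows every off-diagonal entry of $\operatorname{Mult}_p(J-I)$ equals $\bigl(\operatorname{Mult}_p(n-1)-\operatorname{Mult}_p(-1)\bigr)/n$. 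Factoring $\operatorname{Mult}_p(x)-\operatorname{Mult}_p(-1)=(x+1)h(x)$ with $h\in\mathbb Z[x]$ rewrites this entry as the integer $h(n-1)$, clearing the denominator $n$. Finally I would reduce $h(n-1)$ modulo $2$ using the parity of $n$ together with the elementary fact that $a\equiv b\pmod 2$ implies $h(a)\equiv h(b)\pmod 2$: when $n$ is even this gives $h(1)=\bigl(\operatorname{Mult}_p(1)-\operatorname{Mult}_p(-1)\bigr)/2=R_p$, and when $n$ is odd it gives $h(0)=\operatorname{Mult}_p(0)-\operatorname{Mult}_p(-1)$, which is congruent to $\operatorname{Mult}_p(1)+\operatorname{Mult}_p(0)=R_p$ since $\operatorname{Mult}_p(1)\equiv\operatorname{Mult}_p(-1)\pmod 2$.

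The main obstacle is precisely this last step: matching $\bigl(\operatorname{Mult}_p(S)\bigr)_{ij}$, which is intrinsically $\bigl(\operatorname{Mult}_p(n-1)-\operatorname{Mult}_p(-1)\bigr)/n$, to the value $R_p$ expressed through $\operatorname{Mult}_p(\pm1)$ and $\operatorname{Mult}_p(0)$. The two key devices are clearing the denominator $n$ via the factor $(x+1)$ and exploiting that an integer polynomial evaluated at an integer only depends on that integer modulo $2$; the even/odd split in the definition of $R_p$ is exactly what this argument produces. Everything else reduces to the standard resolvent and spectral identities assembled above.
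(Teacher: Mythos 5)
Your proposal is correct and follows essentially the same route as the paper: identify the left-hand side of \eqref{eqn:integralitySeidel} with the off-diagonal entry $\operatorname{Mult}_p(S)_{i,j}$ via the spectral decomposition and the warranted index $i$ from Lemma~\ref{lem:warranted}, then pin down its parity. The only difference is that you unpack, with correct self-contained arguments, the two facts the paper imports by citation — the identification $\alpha_\lambda(\mathfrak f)=\lVert P_\lambda e_i\rVert$ (cited as \cite[Lemmas 4.4 and 4.5]{GSYdim1718}) and the parity of the off-diagonal entries of $\operatorname{Mult}_p(S)$ via $S\equiv J-I\pmod 2$ (cited as \cite[Lemma 2.1]{GG18}).
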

\begin{proof}
Set $p(x) = \operatorname{Char}_S(x)$.
Since $\mathfrak f(x) \in \operatorname{Deck}(p)$ is $p(x)$-warranted, then, by Lemma~\ref{lem:warranted}, there exists $i \in \{1,\dots,n\}$ such that $\operatorname{Char}_{S[i]}(x) = \mathfrak f(x)$.
Let $j \in \{1,\dots,n\}$ and let $\mathfrak{g}(x) = \operatorname{Char}_{S[j]}(x)$.
If $\mathfrak{f}(x) = \mathfrak{g}(x)$ then we are done.
Otherwise, we have $i \ne j$ and for each $\lambda \in \Sigma_p$, denote by $\mathbf u_\lambda$ a unit eigenvector for $\lambda$.
By the Spectral Decomposition Theorem, we have 
$$\operatorname{Mult}_p(S)_{i,j} =  \sum_{\lambda \in \Sigma_p} \operatorname{Mult}_p(\lambda)\mathbf u_\lambda(i) \mathbf u_\lambda(j) = \sum_{\lambda \in \Sigma_p}  \operatorname{Mult}_p(\lambda)\bm \delta(\lambda) \alpha_{\lambda}(\mathfrak f)\alpha_{\lambda}(\mathfrak g)$$
where $\bm \delta \in \{\pm 1 \}^{\Sigma_p}$ and $\mathbf{u}_{\lambda}(i)^2 = \alpha_{\lambda} (\mathfrak f)^2$, $\mathbf{u}_{\lambda}(j)^2 = \alpha_{\lambda} (\mathfrak g)^2$ by \cite[Lemmas 4.4 and 4.5]{GSYdim1718}.
Clearly, $\operatorname{Mult}_p(S)_{i,j}$ is an off-diagonal entries of $\operatorname{Mult}_p(S)$, which is an integer matrix.
Furthermore, the parities of the off-diagonal entries of $\operatorname{Mult}_p(S)$ can be determined by using \cite[Lemma 2.1]{GG18}.
It follows that $\operatorname{Mult}_p(S)_{i,j}$ also satisfies \eqref{eqn:integralitySeidel} and hence, $\mathfrak{g}(x)$ is $p(x)$-Seidel-compatible with $\mathfrak{f}(x)$.
\end{proof}

Now we employ Lemma~\ref{lem:compatible} to rule out the last remaining candidate characteristic polynomial from part (ii) of Theorem~\ref{thm:candpols}.

\begin{lem} \label{lem:last5_quad109}
There does not exist a Seidel matrix $S$ with characteristic polynomial
$$
\operatorname{Char}_S(x) = (x+5)^{42} (x-11)^{10} (x-13)^6 (x^2-22x+109).
$$
\end{lem}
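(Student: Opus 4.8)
The plan is to combine the warranted-polynomial machinery of Lemma~\ref{lem:warranted} with the Seidel-compatibility constraint of Lemma~\ref{lem:compatible}, exactly as this section has been building toward. Write $p(x) = \operatorname{Char}_S(x)$. Its distinct zeros are $-5$, $11$, $13$, and the two simple conjugate zeros $11 \pm 2\sqrt{3}$ of $x^2-22x+109$, so $\operatorname{Min}_p(x) = (x+5)(x-11)(x-13)(x^2-22x+109)$ has degree $e=5$, while $\Sigma_p = \{11 \pm 2\sqrt{3}\}$ and $\operatorname{Mult}_p(x) = (x+5)(x-11)(x-13)$. First I would construct $\operatorname{Deck}(p)$ via Lemma~\ref{lem:coeff_of_interlacepoly} together with the enumeration algorithm of \cite[Section 2.3]{GSY21}: every $\mathfrak f(x) \in \operatorname{Deck}(p)$ has the form $(x+5)^{41}(x-11)^9(x-13)^5\, f(x)$ with $f$ monic of degree $4$ whose top three coefficients are pinned down by $b_0 = 1$, $b_1 = -41$, $b_2 = 609$, leaving only finitely many integer candidates to test against interlacing and the type-$2$ condition.

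Next I would single out a $\operatorname{Char}_S(x)$-warranted member $\mathfrak f_1(x) \in \operatorname{Deck}(p)$ and exhibit its certificate of warranty, so that by Lemma~\ref{lem:warranted} some principal submatrix $S[i]$ satisfies $\operatorname{Char}_{S[i]}(x) = \mathfrak f_1(x)$. By Lemma~\ref{lem:compatible}, every $\operatorname{Char}_{S[j]}(x)$, and hence every deck polynomial carrying a positive entry in any interlacing configuration, must then be $\operatorname{Char}_S(x)$-Seidel-compatible with $\mathfrak f_1(x)$. I would therefore evaluate the compatibility relation~\eqref{eqn:integralitySeidel} of each $\mathfrak g(x) \in \operatorname{Deck}(p)$ against $\mathfrak f_1(x)$. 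Here $\deg p = 60$ is even, so $R_p = (\operatorname{Mult}_p(1) - \operatorname{Mult}_p(-1))/2 = (720 - 672)/2 = 24 \equiv 0 \pmod 2$, and since $|\Sigma_p| = 2$ there are only four sign vectors $\bm\delta \in \{\pm 1\}^{\Sigma_p}$ to test per $\mathfrak g$. This splits $\operatorname{Deck}(p)$ into the polynomials compatible with $\mathfrak f_1$ and those not; reordering the compatible ones $\mathfrak f_1,\dots,\mathfrak f_m$ to the front, the remaining incompatible deck polynomials are forced to carry zero multiplicity in any interlacing configuration.

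It then remains to show the restricted system admits no admissible solution. Using the ordered coefficient matrix $C(\mathfrak f_1,\dots,\mathfrak f_m)$, I would apply Corollary~\ref{cor:infeasibility} (Farkas' Lemma) to the compatible sub-deck and exhibit a certificate of infeasibility for $\{\mathfrak f_1,\dots,\mathfrak f_m\}$, showing that $\mathbf n\, C(\mathfrak f_1,\dots,\mathfrak f_m) = C(p')$ has no nonnegative solution $\mathbf n$. Since Lemma~\ref{lem:interlacingconfig} guarantees that such a configuration, supported on the compatible set, must exist whenever $S$ does, this contradiction rules out the existence of $S$.

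The step I expect to be the main obstacle is the Seidel-compatibility computation itself, precisely because $\Sigma_p$ consists of the irrational Galois-conjugate zeros $11 \pm 2\sqrt{3}$. The individual summands $\operatorname{Mult}_p(\lambda)\,\bm\delta(\lambda)\,\alpha_\lambda(\mathfrak f_1)\alpha_\lambda(\mathfrak g)$ involve $\alpha_\lambda = \sqrt{f(\lambda)/\operatorname{Min}_p'(\lambda)}$ lying in an extension of $\mathbb{Q}(\sqrt{3})$, so only suitably symmetric sign choices yield an integer value to which a residue mod $2$ can be assigned. Care is needed to track which $\bm\delta$ produce an integral sum and to compute its parity correctly, using \cite[Lemma 2.1]{GG18} to fix the parities of the off-diagonal entries of $\operatorname{Mult}_p(S)$. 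Once this compatibility table is established, the concluding linear-algebra verification is routine.
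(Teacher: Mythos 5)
Your overall strategy — construct $\operatorname{Deck}(p)$, find a $\operatorname{Char}_S(x)$-warranted polynomial $\mathfrak f_1(x)$, use Lemma~\ref{lem:compatible} to discard every deck polynomial that is not $\operatorname{Char}_S(x)$-Seidel-compatible with $\mathfrak f_1(x)$, and then analyse the restricted linear system — is exactly the paper's route, and your setup of $\operatorname{Min}_p$, $\Sigma_p = \{11 \pm 2\sqrt 3\}$, $\operatorname{Mult}_p(x) = (x+5)(x-11)(x-13)$, and $R_p = 24$ is all correct. The compatibility computation does indeed cut the deck (of $11$ polynomials) down to just three, $\mathfrak f_1, \mathfrak f_3, \mathfrak f_8$.

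However, your proposed concluding step would fail. You plan to finish by exhibiting a certificate of infeasibility in the sense of Corollary~\ref{cor:infeasibility}, i.e.\ by showing via Farkas' Lemma that $\mathbf n\, C(\mathfrak f_1,\mathfrak f_3,\mathfrak f_8) = C(p')$ has no \emph{nonnegative} solution. But that system does have one: it has the unique solution $\mathbf n = (207/4,\, 6,\, 9/4)$, whose entries are all nonnegative, so no Farkas certificate exists for the restricted sub-deck. The contradiction the paper extracts is not nonnegativity but \emph{integrality}: Lemma~\ref{lem:interlacingconfig} requires the $n_{\mathfrak f}$ to be nonnegative \emph{integers}, and $207/4$ and $9/4$ are not integers. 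So you need to replace your final "routine linear-algebra verification" with the observation that the unique solution of the restricted system is non-integral; as stated, the step you describe cannot be carried out.
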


\begin{proof}
Suppose a Seidel matrix $S$ has characteristic polynomial
$$
\operatorname{Char}_S(x) = (x+5)^{42} (x-11)^{10} (x-13)^6 (x^2-22x+109).
$$
Then $\operatorname{Deck}(\operatorname{Char}_S) = \{\mathfrak f_1(x), \dots, \mathfrak f_{11}(x)\}$, where
\begin{align*}
    \mathfrak{f}_1(x) &= (x+5)^{41} (x-11)^9 (x-13)^5 (x^4-41x^3+609x^2-3871x+8886), \\
    \mathfrak{f}_2(x) &= (x+5)^{41} (x-11)^9 (x-13)^5 (x^4-41x^3+609x^2-3867x+8834), \\
    \mathfrak{f}_3(x) &= (x+5)^{41} (x-11)^9 (x-13)^6 (x^3-28x^2+245x-682), \\
    \mathfrak{f}_4(x) &= (x+5)^{41} (x-11)^9 (x-13)^5 (x^4-41x^3+609x^2-3855x+8678), \\ 
    \mathfrak{f}_5(x) &= (x+5)^{41} (x-11)^9 (x-13)^6 (x^3-28x^2+245x-670), \\
    \mathfrak{f}_6(x) &= (x+5)^{41} (x-11)^9 (x-13)^5 (x^4-41x^3+609x^2-3851x+8626), \\
    \mathfrak{f}_7(x) &= (x+5)^{41} (x-9) (x-11)^9 (x-13)^6 (x^2-19x+74), \\
    \mathfrak{f}_8(x) &= (x+5)^{41} (x-5) (x-11)^{11} (x-13)^5 (x-14), \\
    \mathfrak{f}_9(x) &= (x+5)^{41} (x-11)^9 (x-13)^6 (x^3-28x^2+245x-654), \\
    \mathfrak{f}_{10}(x) &= (x+5)^{41} (x-5) (x-10) (x-11)^9 (x-13)^7, \\
    \mathfrak{f}_{11}(x) &= (x+5)^{41} (x-11)^{10} (x-13)^6 (x^2-17x+58).
\end{align*}
The polynomial $\mathfrak{f}_1(x)$ is $\operatorname{Char}_S(x)$-warranted with certificate of warranty $$(16485427, 0, 0, 0, -1859).$$
Only three polynomials, $\mathfrak{f}_1(x), \mathfrak{f}_3(x),$ and $\mathfrak{f}_8(x)$, in $\operatorname{Deck}(\operatorname{Char}_S)$ are $\operatorname{Char}_S(x)$-Seidel-compatible with $\mathfrak f_1(x)$.
The vector $\mathbf n = (207/4,6,9/4)$ is the unique solution to the equation $\mathbf{n} \cdot \operatorname{Coeff}(\mathfrak{f}_1,\mathfrak{f}_3,\mathfrak{f}_8) = \operatorname{Coeff}(\operatorname{Char}_S^\prime)$.
However, the entries of $\mathbf n$ are not all integers, which contradicts Lemma~\ref{lem:interlacingconfig}.
\end{proof}
To complete the proof of Theorem~\ref{thm:main}, it remains to rule out one last candidate characteristic polynomial from Theorem~\ref{thm:candpols}.

\section{The Jacobi identity for complementary subgraphs}
\label{sec:decaen_jacobi}

In this section, we rule out the last remaining candidate characteristic polynomial from Theorem~\ref{thm:candpols}.
That is, we prove the following theorem.

\begin{thm} \label{thm:last5_3ev}
There does not exist a Seidel matrix $S$ with characteristic polynomial
$$
\operatorname{Char}_S(x) = (x+5)^{42} (x-11)^{15} (x-15)^3.
$$
\end{thm}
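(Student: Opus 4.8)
The plan is to pass from the Seidel matrix $S$ to the regular graph $\Gamma$ promised by \cite{GG18}: a $22$-regular graph on $60$ vertices whose adjacency matrix $A$ has spectrum $22^{(1)}, 2^{(42)}, (-6)^{(15)}, (-8)^{(2)}$, so that $\operatorname{Char}_A(x) = (x-22)(x-2)^{42}(x+6)^{15}(x+8)^2$; ruling out this graph rules out $S$. First I would fix the spectral idempotents $E_{22}, E_2, E_{-6}, E_{-8}$ of $A$, where $E_{22} = \tfrac{1}{60}J$ because $\Gamma$ is regular and $22$ is simple, and where $E_{-8}$ has rank exactly $2$. These idempotents are integer-coefficient polynomials in $A$ after clearing denominators, so their entries $(E_\mu)_{uv}$ are controlled by walk counts and hence by the local combinatorial structure of $\Gamma$.

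The engine is the De~Caen--Jacobi identity: applying Jacobi's theorem on complementary minors to the resolvent $(xI-A)^{-1} = \sum_{\mu} (x-\mu)^{-1} E_\mu$, one obtains, for every vertex subset $\mathcal N$ with complement $\overline{\mathcal N}$,
\[
\operatorname{Char}_{\Gamma[\mathcal N]}(x) \;=\; \operatorname{Char}_A(x)\,\det\!\Big(\sum_{\mu} \tfrac{1}{x-\mu}\, E_\mu[\overline{\mathcal N}]\Big),
\]
where the left-hand side is the characteristic polynomial of the \emph{complementary} induced subgraph. The key point is that the left side is forced to be a monic polynomial in $\mathbb Z[x]$, so the right side --- a priori a rational function with poles only at the eigenvalues --- must in fact be polynomial with integral residues; expanding the determinant produces a family of algebraic relations among the minors of the restricted idempotents $E_\mu[\overline{\mathcal N}]$.

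Next I would specialize $\overline{\mathcal N}$ to be small and exploit the two rigid projections. Because $E_{22} = J/60$ is rank $1$ with all entries $1/60$ and $E_{-8}$ has rank $2$, every minor of $E_{-8}[\overline{\mathcal N}]$ of size at least $3$ vanishes, which kills most terms of the determinant and pins down the surviving combinations of $(E_2)_{uv}$ and $(E_{-6})_{uv}$. Reading off the residues at $x=-8$ and $x=2$ then translates into integrality (and sign) conditions on the entries $(E_\mu)_{uv}$, i.e.\ on the numbers of walks between small vertex tuples (equivalently, on common-neighbour counts) --- these are precisely the necessary algebraic conditions for a graph to be an induced subgraph of $\Gamma$. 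Feeding in the global constraints from regularity and the fixed traces $\tr A^i$ (which bound how often each admissible small subgraph can occur), I would then count and reach a contradiction, for instance a putative common-neighbour count or idempotent entry that is forced to be non-integral or to violate a nonnegativity bound.

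The hard part will be this third step: the other $43$ candidates fell to crude interlacing and trace estimates precisely because $\Gamma$ is spectrally only one eigenvalue away from strongly regular, so every coarse invariant remains consistent. The real work is to squeeze the De~Caen--Jacobi identity hard enough, using the rank-$2$ condition on $E_{-8}$ as the lever, to obtain a genuinely over-determined Diophantine system on the local structure, and then to verify --- likely through a finite, possibly computer-assisted, case analysis over the few admissible small induced subgraphs --- that this system has no solution.
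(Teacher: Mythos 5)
Your overall strategy is genuinely the one the paper follows -- pass to the $22$-regular graph $\Gamma$ with spectrum $\{[22]^1,[2]^{42},[-6]^{15},[-8]^2\}$, exploit the rank-$2$ idempotent $E_{-8}$, apply the De~Caen--Jacobi identity to the resolvent, use integrality of the complementary characteristic polynomial, and finish with a computer-assisted case analysis. But the proposal stops exactly where the proof has to start, and the specific tactics you sketch for the ``hard part'' would not close it. Restricting $\overline{\mathcal N}$ to small vertex sets and reading off residues of the determinant at $x=-8$ and $x=2$ only recovers conditions on entries of $E_{-8}$ and $E_2$ that are already forced by the integrality of walk counts together with the vanishing of $3\times 3$ minors of a rank-$2$ matrix; for a graph this close to strongly regular those conditions are consistent, which is precisely why the $43$ other candidates needed different treatment. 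The two ideas you are missing are structural and global. First, the rank-$2$ idempotent is used \emph{before} any Jacobi argument: writing $M=3A^2+12A-36I-28J$ (a positive multiple of $E_{-8}$), one shows $M$ is positive semidefinite with diagonal $2$ and off-diagonal entries in $\{-1,2\}$, that $M_{ij}=2$ is an equivalence relation, and hence that $V(\Gamma)$ splits into three classes of size $20$ forming an equitable partition with quotient matrix $\left[\begin{smallmatrix}2&10&10\\10&2&10\\10&10&2\end{smallmatrix}\right]$; in particular each class induces a disjoint union of cycles. This is a van Dam--Spence type argument and it is what turns the spectral data into combinatorics.

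Second, the De~Caen--Jacobi identity is then applied with the \emph{retained} set $\mathcal T$ equal to an entire $20$-vertex class (or a class plus a few vertices of another class), not a small set, and the leverage comes from divisibility of the resulting determinant by high powers of $(x+6)$ -- the eigenvalue of multiplicity $15$ -- rather than from the poles at $-8$ or $2$. Evaluating the relevant rational functions at $x=-6$ (together with a Cauchy--Schwarz step) forces each class to induce five $4$-cycles; further applications with $\mathcal T$ a class plus one or two outside vertices pin down $\mathbf u^\transpose\mathbf u=10$, $\mathbf u^\transpose A[\mathcal T]\mathbf u=6$, $\mathbf u^\transpose A[\mathcal T]^2\mathbf u=28$ for any outside neighbourhood vector $\mathbf u$, and the admissible pairs $(\mathbf u^\transpose\mathbf v,\mathbf u^\transpose A[\mathcal T]\mathbf v)\in\{(4,14),(5,10),(6,6)\}$ for nonadjacent outside vertices. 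The contradiction is then combinatorial: the $10$ pairwise nonadjacent vertices of a second class would give a clique of order $9$ in a $454$-vertex compatibility graph whose maximum clique has only $7$ vertices (verified by computer). Without the equitable partition and the choice of a whole class as the retained set, the ``over-determined Diophantine system'' you hope for does not materialize, so as written the proposal has a genuine gap at its decisive step.
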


Let $\Gamma$ be a graph, $A$ be the adjacency matrix of $\Gamma$, and $\mathcal T, \mathcal U$ be subsets of $V(\Gamma)$.
In the remainder of the paper, the rows and columns of the adjacency matrix $A$ are indexed by the vertices of $\Gamma$.
Accordingly, we define the following notation.
\begin{itemize}
    \item $\Gamma[\mathcal T]$ denotes the subgraph of $\Gamma$ induced on the vertices of $\mathcal T$.
    \item $A[\mathcal T, \mathcal U]$ denotes the submatrix of $A$ with rows from $\mathcal T$ and columns from $\mathcal U$.
    \item $A[\mathcal T]$ denotes the principal submatrix $A[\mathcal T,\mathcal T]$.
\end{itemize}

Let $v$ be a vertex of $\Gamma$ and let $\mathbf{v}$ be a vector such that $\mathbf{v} = A[\mathcal{T},\{v\}]$.
For $\mathcal S \subseteq \mathcal T$, denote by $\mathbf{v}[\mathcal{S}]$ the vector $A[\mathcal{S},\{v\}]$.
Furthermore, in this section we occasionally consider the complement of a subset of $V(\Gamma)$, where similarly to before, we define $\overline{\mathcal T} \coloneqq V(\Gamma) \backslash \mathcal{T}$.
In order to prove Theorem~\ref{thm:last5_3ev}, inspired by a paper of De~Caen~\cite{decaen}, we heavily use the following identity due to Jacobi:
\begin{lem}[Jacobi]
\label{lem:decaenjacobi}
    Let $\mathcal T \subseteq V(\Gamma)$ and suppose that $A$ is the adjacency matrix of $\Gamma$, then
    \[
    \det(xI - A[\overline{\mathcal T}]) = \det(xI-A) \det( (xI-A)^{-1}[\mathcal T] ).
    \]
\end{lem}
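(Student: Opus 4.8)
The identity is the classical theorem of Jacobi relating a minor of a matrix inverse to the complementary minor of the matrix itself, here specialized to the symmetric choice of equal row and column index sets (the feature that makes the usual Jacobi sign disappear). The plan is to prove it via the Schur complement, working over the field $\mathbb{R}(x)$ of rational functions so that all of the matrices involved are genuinely invertible.

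First I would set $B \coloneqq xI - A$ and regard its entries as elements of $\mathbb{R}(x)$. Since $\det B = \operatorname{Char}_A(x)$ is a nonzero polynomial, $B$ is invertible over $\mathbb{R}(x)$; likewise $\det B[\overline{\mathcal T}] = \det(xI - A[\overline{\mathcal T}])$ is a nonzero polynomial, so $B[\overline{\mathcal T}]$ is invertible. Ordering the index set so that the elements of $\mathcal T$ come first, I would write $B$ in block form
\[
B = \begin{pmatrix} B[\mathcal T] & B[\mathcal T, \overline{\mathcal T}] \\ B[\overline{\mathcal T}, \mathcal T] & B[\overline{\mathcal T}] \end{pmatrix}.
\]

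The proof then rests on two standard block identities. The block-inverse formula expresses the $\mathcal T$-block of $B^{-1}$ as the inverse of the Schur complement of $B[\overline{\mathcal T}]$:
\[
(B^{-1})[\mathcal T] = \left( B[\mathcal T] - B[\mathcal T, \overline{\mathcal T}]\, B[\overline{\mathcal T}]^{-1}\, B[\overline{\mathcal T}, \mathcal T] \right)^{-1},
\]
while the Schur determinant formula gives
\[
\det B = \det B[\overline{\mathcal T}] \cdot \det\left( B[\mathcal T] - B[\mathcal T, \overline{\mathcal T}]\, B[\overline{\mathcal T}]^{-1}\, B[\overline{\mathcal T}, \mathcal T] \right).
\]
Taking determinants in the first display and substituting the second yields $\det\bigl((B^{-1})[\mathcal T]\bigr) = \det B[\overline{\mathcal T}] / \det B$. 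Multiplying through by $\det B = \det(xI - A)$ and recalling that $B[\overline{\mathcal T}] = xI - A[\overline{\mathcal T}]$ gives exactly the asserted identity.

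The one genuine subtlety — the step I would be most careful about — is the invertibility bookkeeping. The block-inverse formula requires $B[\overline{\mathcal T}]$ to be invertible, which fails at the finitely many values of $x$ that are eigenvalues of $A[\overline{\mathcal T}]$. Passing to $\mathbb{R}(x)$ sidesteps this completely: there $x$ is transcendental and both $\det B$ and $\det B[\overline{\mathcal T}]$ are nonzero elements of the field. Since both sides of the asserted identity are polynomials in $x$, establishing the equality of rational functions over $\mathbb{R}(x)$ establishes it as a polynomial identity, and hence it holds upon specializing $x$ to any real or complex number.
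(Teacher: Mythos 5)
Your proof is correct. Note that the paper itself offers no proof of this lemma: it simply invokes the classical Jacobi identity as used in De~Caen's paper, so there is nothing internal to compare your argument against. Your Schur-complement derivation is the standard self-contained route: the two block identities you quote (the top-left block of $B^{-1}$ equals the inverse of the Schur complement of $B[\overline{\mathcal T}]$, and $\det B = \det B[\overline{\mathcal T}]\cdot\det(B/B[\overline{\mathcal T}])$) combine immediately to give $\det\bigl((B^{-1})[\mathcal T]\bigr) = \det B[\overline{\mathcal T}]/\det B$, which is the assertion. Working over $\mathbb{R}(x)$ to guarantee invertibility of both $B$ and $B[\overline{\mathcal T}]$ is exactly the right way to handle the degenerate values of $x$; the only cosmetic quibble is that the right-hand side is a priori a rational function rather than a polynomial, and it is the identity itself that certifies it is polynomial. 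An equivalent classical alternative, for what it is worth, is to apply Jacobi's determinant identity for complementary minors of the adjugate, $\det(\operatorname{adj}(B)[\mathcal T]) = (\det B)^{|\mathcal T|-1}\det B[\overline{\mathcal T}]$, and divide by $(\det B)^{|\mathcal T|}$; your Schur-complement version reaches the same conclusion with less sign bookkeeping, precisely because the row and column index sets coincide.
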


We use Lemma~\ref{lem:decaenjacobi} to derive a necessary algebraic condition for induced subgraphs of $\Gamma$ (see Corollary~\ref{cor:decaen} and Lemma~\ref{lem:decaenBlock}). 

Denote by $J$ the all-ones matrix and by $O$ the zero matrix.
We use subscripts $J_{n,m}$ to indicate the all-ones matrix with $n$ rows and $m$ columns, or merely write $J_n$ for $J_{n,n}$.
The all-ones vector (with $n$ entries) is denoted by $\mathbf 1_n$.
The subscript is omitted if the order of the matrix (or vector) can be determined from context. 

First we consider the orthogonal projection matrix of the dimension-$2$ eigenspace of a regular graph in the switching class of the putative Seidel matrix of Theorem~\ref{thm:last5_3ev}.
For the sake of contradiction, suppose that there exists a Seidel matrix $S$ with characteristic polynomial
$$
\operatorname{Char}_S(x) = (x+5)^{42} (x-11)^{15} (x-15)^3.
$$
By \cite[Remark 5.12]{GG18} there exists a (regular) graph $\Gamma$ with adjacency matrix $A$ of spectrum:
\[\Spec A=\{[22]^1,[2]^{42},[-6]^{15},[-8]^2\}.\]
For the rest of this section, $\Gamma$ will remain fixed and will be repeatedly referred to without being specified again.

\subsection{An equitable partition for $\Gamma$}

Following \cite[Section~3.1]{vDS}, we set $M=3A^2+12A-36I-28J$, where $A$ is the adjacency matrix of $\Gamma$.
Then $M$ is positive semidefinite with diagonal entries $2$,
so $|M_{ij}|\leqslant 2$ for all $i,j$. 
For $i\neq j$, the entry $(A^2+4A)_{ij} = (M_{ij}+28)/3$ is an integer.
Hence
$M_{ij} \in \{-1,2\}$ and $(A^2+4A)_{ij} \in \{9,10\}$.

Now we claim that the relation where $i\sim j$ when $M_{ij}=2$
is an equivalence relation. 
Indeed, if
$i\sim j$ and $j\sim k$ for distinct $i,j,k$, then
\begin{align*}
2 &=\rank M \geqslant \rank \begin{bmatrix}
M_{ii}&M_{ij}&M_{ik}\\
M_{ij}&M_{jj}&M_{jk}\\
M_{ik}&M_{jk}&M_{kk}
\end{bmatrix}
= \rank\begin{bmatrix}
2&2&M_{ik}\\
2&2&2\\
M_{ik}&2&2\end{bmatrix}.
\end{align*}
This forces $M_{ik}=2$, or equivalently, $i\sim k$.
Now, since $MJ=0$ and the entries of $M$ are $2$ and $-1$,
each equivalence class has size $20$. 

Let $\Omega$ be the partition of $V(\Gamma)$ whose three parts are the three equivalence classes of the above relation.
Just as for $\Gamma$, we also fix $\Omega$ for the remainder of this section.
If we assume that $ A = \left [\begin{smallmatrix}
A_{11}&A_{12}&A_{13}\\
A_{21}&A_{22}&A_{23}\\
A_{31}&A_{32}&A_{33}\end{smallmatrix} \right ]$ where $\{ A_{ii} \; : \; i \in 
\{1,2,3\} \}= \{ A[\mathcal T] \; : \; \mathcal T \in \Omega \}$ then
\[
M=\begin{bmatrix}
2&-1&-1\\
-1&2&-1\\
-1&-1&2
\end{bmatrix}\otimes J_{20},
\]
where $\otimes$ denotes the Kronecker product for matrices.

We summarise the above as a lemma:

\begin{lem}
\label{lem:almostsrg}
Suppose $A$ is the adjacency matrix of $\Gamma$ such that $ A = \left [\begin{smallmatrix}
A_{11}&A_{12}&A_{13}\\
A_{21}&A_{22}&A_{23}\\
A_{31}&A_{32}&A_{33}\end{smallmatrix} \right ]$ where $\{ A_{ii} \; : \; i \in 
\{1,2,3\} \}= \{ A[\mathcal T] \; : \; \mathcal T \in \Omega \}$. Then
    \begin{align*}
    A^2+4A-12I_{60} = 9J_{60}+I_3 \otimes J_{20}.
\end{align*}
\end{lem}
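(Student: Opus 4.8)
The plan is to obtain the claimed identity by rearranging the matrix equation for $M$ that has already been established in the discussion preceding the lemma. Recall that $M = 3A^2 + 12A - 36I - 28J$ and that the equivalence-relation argument together with the uniform class size forced
\[
M = \begin{bmatrix} 2 & -1 & -1 \\ -1 & 2 & -1 \\ -1 & -1 & 2 \end{bmatrix} \otimes J_{20}.
\]
The key observation is that the $3 \times 3$ matrix appearing here is $3I_3 - J_3$, so that, using $J_3 \otimes J_{20} = J_{60}$ and the bilinearity of the Kronecker product,
\[
M = (3I_3 - J_3) \otimes J_{20} = 3(I_3 \otimes J_{20}) - J_{60}.
\]

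First I would substitute the definition of $M$ into this equation to obtain $3A^2 + 12A - 36I - 28J = 3(I_3 \otimes J_{20}) - J_{60}$. Since $J = J_{60}$ here (the graph $\Gamma$ has $1 + 42 + 15 + 2 = 60$ vertices), collecting the all-ones terms on the right gives $3A^2 + 12A - 36I = 3(I_3 \otimes J_{20}) + 27J_{60}$, and dividing through by $3$ yields exactly $A^2 + 4A - 12I_{60} = 9J_{60} + I_3 \otimes J_{20}$, as required.

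There is no serious obstacle in this final step: the substance of the argument, namely verifying that $M$ is positive semidefinite with diagonal $2$ and off-diagonal entries in $\{-1,2\}$, that the relation $M_{ij} = 2$ defines an equivalence relation via the rank-$2$ constraint on the indicated $3 \times 3$ principal submatrix, and that each class has size exactly $20$ as forced by $MJ = 0$, was carried out before the lemma statement. The only care needed here is in the Kronecker algebra: rewriting the block matrix as $3I_3 - J_3$ and using $J_3 \otimes J_{20} = J_{60}$ to identify the all-ones contribution correctly. The resulting identity is the \emph{almost strongly regular} relation that will drive the subsequent application of the De~Caen-Jacobi identity.
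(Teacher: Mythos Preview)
Your proposal is correct and matches the paper's approach: the paper explicitly presents this lemma as a summary of the preceding discussion (``We summarise the above as a lemma''), so the content is exactly the rearrangement you describe. Your Kronecker-product manipulation $(3I_3 - J_3)\otimes J_{20} = 3(I_3\otimes J_{20}) - J_{60}$ and the subsequent division by $3$ are precisely what is needed to pass from the displayed form of $M$ to the stated identity.
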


Now we show that $\Omega$ is an equitable partition of $\Gamma$.

\begin{lem}
\label{lem:equiPart}
     The partition $\Omega$ of $V(\Gamma)$ is equitable with quotient matrix 
     \[
     \begin{bmatrix}
     2 & 10 & 10 \\
     10 & 2 & 10 \\
     10 & 10 & 2
     \end{bmatrix}.
     \]
\end{lem}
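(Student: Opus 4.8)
The plan is to extract everything from the single matrix identity in Lemma~\ref{lem:almostsrg}, namely
\[
A^2 + 4A - 12 I_{60} = 9 J_{60} + I_3 \otimes J_{20},
\]
by reading off the block structure induced by the partition $\Omega$. First I would recall what equitability means concretely: writing $A$ in the $3\times 3$ block form with respect to $\Omega$, I must show that for each pair $(i,k)$ every row of the block $A_{ik}$ has the same row sum, and that these common row sums are precisely the entries of the claimed quotient matrix $Q = 2 I_3 + 10(J_3 - I_3)$. Equivalently, it suffices to prove the identity $A \cdot (I_3 \otimes \mathbf 1_{20}) = (I_3 \otimes \mathbf 1_{20}) \, Q$, since the columns of $I_3 \otimes \mathbf 1_{20}$ are exactly the indicator vectors of the three parts of $\Omega$, and this single matrix equation encodes that each block $A_{ik}$ has constant row sum $Q_{ik}$.

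The key computation is to multiply the Lemma~\ref{lem:almostsrg} identity on the right by the characteristic vector $\mathbf 1_{\mathcal T}$ of a part $\mathcal T \in \Omega$, i.e.\ by a column of $I_3 \otimes \mathbf 1_{20}$. First I would record the easy facts: each part has size $20$ (established above), so $J_{60}\,\mathbf 1_{\mathcal T} = 20 \cdot \mathbf 1_{60}$, and the block-diagonal $I_3 \otimes J_{20}$ acts on $\mathbf 1_{\mathcal T}$ as $20\,\mathbf 1_{\mathcal T}$. Moreover $\Gamma$ is regular of degree $22$ (the Perron eigenvalue of $\Spec A$), so $A \mathbf 1_{60} = 22 \cdot \mathbf 1_{60}$. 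Applying the identity to $\mathbf 1_{\mathcal T}$ thus gives
\[
A^2 \mathbf 1_{\mathcal T} + 4 A \mathbf 1_{\mathcal T} - 12 \mathbf 1_{\mathcal T}
= 9 \cdot 20 \cdot \mathbf 1_{60} + 20 \cdot \mathbf 1_{\mathcal T}
= 180\,\mathbf 1_{60} + 20\,\mathbf 1_{\mathcal T}.
\]
Setting $\mathbf w = A \mathbf 1_{\mathcal T}$, this reads $A\mathbf w + 4\mathbf w - 12\mathbf 1_{\mathcal T} = 180\,\mathbf 1_{60} + 20\,\mathbf 1_{\mathcal T}$. From here I want to solve for $\mathbf w = A\mathbf 1_{\mathcal T}$ and show it is constant on each part of $\Omega$, with the constant value on part $\mathcal S$ equal to $Q_{\mathcal S, \mathcal T}$.

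To finish, I would exploit the global structure: $\mathbf w = A\mathbf 1_{\mathcal T}$ has nonnegative integer entries (degrees into $\mathcal T$) and, summing its entries, $\mathbf 1^{\transpose}\mathbf w = \mathbf 1^{\transpose} A \mathbf 1_{\mathcal T} = 22\cdot 20 = 440$ counts edges meeting $\mathcal T$ with the correct multiplicity. The cleanest route is to look for $\mathbf w$ of the form $\sum_{\mathcal S \in \Omega} q_{\mathcal S}\,\mathbf 1_{\mathcal S}$ and verify that the displayed equation then becomes a $3\times 3$ linear system in the $q_{\mathcal S}$ whose unique solution is $q_{\mathcal T} = 2$ and $q_{\mathcal S} = 10$ for $\mathcal S \ne \mathcal T$; regularity of $\Gamma$ forces $\sum_{\mathcal S} q_{\mathcal S} = 22$, which is consistent with $2 + 10 + 10 = 22$ and pins down the constants once one observes that $A\mathbf w$ is again expressible through the same identity. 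The main obstacle is the logical step of showing that $\mathbf w$ really is constant on parts rather than merely having the right part-sums — i.e.\ that the partition is genuinely equitable and not just "equitable on average." I expect to close this gap by noting that $M = 3A^2 + 12A - 36I - 28J$ is, block-wise, $\bigl(2I_3 - (J_3-I_3)\bigr)\otimes J_{20}$, so that $A^2$ itself is block-constant with respect to $\Omega$; combined with the degree-$22$ regularity and the explicit form of $I_3\otimes J_{20}$, this forces each off-diagonal block $A_{ik}$ ($i\ne k$) to have constant row and column sums, yielding the entries $10$, while the diagonal blocks contribute the value $2$. Reading off the resulting common row sums gives exactly the stated quotient matrix, completing the proof.
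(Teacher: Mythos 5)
Your opening computation is correct but it stops exactly where the real work begins, and the device you propose for finishing does not hold up. Multiplying the identity of Lemma~\ref{lem:almostsrg} by $\mathbf 1_{\mathcal T}$ gives an equation for $A\mathbf w + 4\mathbf w$ with $\mathbf w = A\mathbf 1_{\mathcal T}$; its entry at a vertex $u$ reads $\sum_{v\sim u}\mathbf w(v) + 4\mathbf w(u) = 212$ if $u\in\mathcal T$ and $=180$ otherwise, which constrains $\mathbf w(u)$ only through the values of $\mathbf w$ at the \emph{neighbours} of $u$. This is precisely the ``equitable on average versus genuinely equitable'' issue you flag yourself, and positing $\mathbf w = \sum_{\mathcal S\in\Omega} q_{\mathcal S}\,\mathbf 1_{\mathcal S}$ assumes the conclusion. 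The repair you sketch rests on the claim that ``$A^2$ itself is block-constant with respect to $\Omega$''; this is false. The identity shows that $A^2+4A$ is block-constant off the diagonal, and since $A$ is certainly not block-constant, neither is $A^2$. Moreover, no mechanism is given for how block-constancy of $A^2+4A$ would force each block of $A$ to have constant row sums; summing $(A^2)_{uv}$ over $v$ in a part again produces equations involving the neighbours of $u$, so the circularity is not broken.

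The paper closes this gap by a different use of the same matrix $M=3A^2+12A-36I-28J$: one checks that $M$ is $60$ times the orthogonal projection onto the $(-8)$-eigenspace of $A$, so that $AM=-8M$. Because $M$ is block-constant, the $(u,v)$-entry of $AM$, for $u\in\mathcal T_i$ and $v\in\mathcal T_j$, equals $\sum_{l} n_{il}(u)\,m_{lj}$, where $n_{il}(u)$ is the number of neighbours of the \emph{individual} vertex $u$ in $\mathcal T_l$ and $(m_{lj})$ is the $3\times 3$ pattern $2I_3-(J_3-I_3)$ of $M$. Combined with $A\mathbf 1=22\mathbf 1$, this produces for each single vertex $u$ an invertible $3\times 3$ linear system whose unique solution is $n_{ii}(u)=2$ and $n_{ij}(u)=10$ for $j\ne i$. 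That per-vertex linear system is the ingredient your proposal is missing: you need an identity in which row $u$ of $A$ is multiplied against a block-constant matrix, not an identity applied to a single indicator vector.
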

\begin{proof}
Suppose that $\Omega = \{\mathcal T_1,\mathcal T_2,\mathcal T_3\}$.
Assume that $A$ is the adjacency matrix of $\Gamma$ such that $ A = \left [\begin{smallmatrix}
A_{11}&A_{12}&A_{13}\\
A_{21}&A_{22}&A_{23}\\
A_{31}&A_{32}&A_{33}\end{smallmatrix} \right ]$ where $A_{ii} = A[\mathcal T_i]$ for each $i \in 
\{1,2,3\}$.

Recall that $M=3A^2+12A-36I-28J$.
Using $(A+8I)(A+6I)(A-2I)=280J$ and Lemma~\ref{lem:almostsrg}, we deduce that
\begin{align}
\label{eqn:mateq}
        AM &= \begin{bmatrix}
        -16&8&8\\
        8&-16&8\\
        8&8&-16
\end{bmatrix}\otimes J_{20}.
\end{align}
Let $u_i \in \mathcal T_i$ for $i = 1,2,3$ and denote by $n_{ij}$ the number of neighbours of $u_i$ in $\mathcal T_j$.
Take the $3 \times 3$ submatrix of \eqref{eqn:mateq} induced on $\{u_1,u_2,u_3\}$ to obtain
\begin{align*}
\begin{bmatrix}
n_{11} & n_{12} & n_{13} \\
n_{21} &n_{22} &n_{23}\\
n_{31} & n_{32} & n_{33} \end{bmatrix}
\begin{bmatrix}
2&-1&-1\\
-1&2&-1\\
-1&-1&2\end{bmatrix} &=
 \begin{bmatrix}
-16&8&8\\
8&-16&8\\
8&8&-16\end{bmatrix}.
\end{align*}
Since $\Gamma$ is regular with valency $22$, we obtain
\begin{align*}
\begin{bmatrix}
n_{11} & n_{12} & n_{13} \\
n_{21} &n_{22} &n_{23}\\
n_{31} & n_{32} & n_{33} \end{bmatrix}
 &=
 \begin{bmatrix}
2&10&10\\
10&2&10\\
10&10&2\end{bmatrix}.
\end{align*}
The statement of the lemma follows, since the choice of $u_1$, $u_2$, and $u_3$ was immaterial.
\end{proof}

The following corollary is immediate.

\begin{cor}
\label{cor:part210}
Let $\mathcal T \in \Omega$ and $u \in \overline{\mathcal T}$.
Then $\Gamma[\mathcal T]$ is a disjoint union of cycles and $u$ is adjacent to precisely $10$ vertices in $\mathcal T$.
\end{cor}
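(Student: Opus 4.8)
The plan is to read off both conclusions directly from the equitable partition established in Lemma~\ref{lem:equiPart}, since the statement is essentially a translation of the entries of the quotient matrix. Recall that the quotient matrix
\[
\begin{bmatrix}
2 & 10 & 10 \\
10 & 2 & 10 \\
10 & 10 & 2
\end{bmatrix}
\]
records, for any vertex lying in the part $\mathcal T_i \in \Omega$, the number of its neighbours in each part $\mathcal T_j$; the defining feature of an equitable partition is that this count depends only on the indices $i$ and $j$ and not on the particular vertex chosen.

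First I would handle the claim that $\Gamma[\mathcal T]$ is a disjoint union of cycles. Write $\mathcal T = \mathcal T_i$ for the appropriate index $i$. The diagonal entry $2$ of the quotient matrix says that every vertex of $\mathcal T$ has exactly two neighbours inside $\mathcal T$, so $\Gamma[\mathcal T]$ is $2$-regular. Invoking the elementary fact that a finite $2$-regular graph is a disjoint union of cycles then yields the first conclusion.

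Next I would handle the count for $u \in \overline{\mathcal T}$. Since $\Omega$ partitions $V(\Gamma)$ into exactly the three parts $\mathcal T_1, \mathcal T_2, \mathcal T_3$ and $u \notin \mathcal T_i = \mathcal T$, the vertex $u$ must lie in one of the other two parts $\mathcal T_j$ with $j \neq i$. The relevant off-diagonal entry of the quotient matrix is $10$, so $u$ is adjacent to precisely ten vertices of $\mathcal T_i = \mathcal T$, as claimed.

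I do not anticipate any genuine obstacle: both assertions follow at once from Lemma~\ref{lem:equiPart}, the first from the diagonal entry together with the standard classification of $2$-regular graphs, and the second from the constant off-diagonal entry together with the observation that $\overline{\mathcal T}$ is the union of the two remaining parts. This is exactly why the corollary can be stated as immediate.
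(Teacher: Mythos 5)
Your proposal is correct and matches the paper's intent exactly: the paper declares the corollary ``immediate'' from Lemma~\ref{lem:equiPart}, and the implicit argument is precisely your reading of the quotient matrix --- the diagonal entry $2$ gives $2$-regularity of $\Gamma[\mathcal T]$ (hence a disjoint union of cycles) and the off-diagonal entry $10$ gives the neighbour count for any $u \in \overline{\mathcal T}$.
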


\subsection{Expressions for matrix inverses }

Now we pursue expressions for the inverse of matrices that we may encounter when applying Lemma~\ref{lem:decaenjacobi}.
Let $D:\mathbb R[x] \to \mathbb R[x]$ be the linear operator defined by $Dx^i \coloneqq x^{i-1}$ for $i>0$ and $Dx^0 \coloneqq 0$.

\begin{lem} \label{lem:meanval_cayleyhamilton}
Suppose $A$ is a real square matrix and $m(x)$ denotes its minimal polynomial.
Let $f$ be a rational function over the real numbers.
If $m(f(x))$ is nonzero, then we have
\[
(f(x) I-A)^{-1} = \frac{1}{m(f(x))} \sum_{k=1}^{\deg m} D^km(f(x)) A^{k-1}.
\]
\end{lem}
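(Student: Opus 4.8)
The plan is to prove the scalar version of the identity first, treating the argument $t$ as an indeterminate, and only then to specialise $t=f(x)$. The underlying mechanism is the standard observation that since $m(A)=0$, the difference quotient of $m$ furnishes an explicit inverse for $tI-A$; the substantive point is merely to verify that the coefficients produced by this difference quotient are exactly the polynomials $D^k m$.

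First I would record the elementary factorisation $t^j-x^j=(t-x)\sum_{b=0}^{j-1}t^{j-1-b}x^{b}$ and sum it against the coefficients of $m$. Writing $m(x)=\sum_{j=0}^{d}c_j x^{j}$ with $d=\deg m$, this yields $m(t)-m(x)=(t-x)\,p(t,x)$, where $p(t,x)=\sum_{j}c_j\sum_{b=0}^{j-1}t^{j-1-b}x^{b}$. Collecting the powers of $x$, the coefficient of $x^{k-1}$ in $p(t,x)$ equals $\sum_{j\geqslant k}c_j t^{j-k}$, which is precisely $(D^k m)(t)$ by the defining action $D^k x^{j}=x^{j-k}$ for $j\geqslant k$ and $0$ otherwise. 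Hence $p(t,x)=\sum_{k=1}^{d}(D^k m)(t)\,x^{k-1}$, and the identity $m(t)-m(x)=(t-x)\,p(t,x)$ holds in $\mathbb{R}[t,x]$.

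Next I would substitute $x=A$. Because every term is a polynomial in $A$, all the matrices involved commute, and since $m(A)=0$ the identity collapses to $m(t)\,I=(tI-A)\sum_{k=1}^{d}(D^k m)(t)\,A^{k-1}$, an equality of matrices whose entries are polynomials in $t$. If $t$ is any scalar with $m(t)\neq 0$, then $\frac{1}{m(t)}\sum_{k}(D^k m)(t)A^{k-1}$ is an inverse of $tI-A$ --- two-sided, since it commutes with $tI-A$ --- which gives the claimed formula for scalar arguments. Specialising $t=f(x)$ at any value where $f$ is defined and $m(f(x))\neq 0$ then yields the lemma, the substitution being legitimate as an identity of rational functions because the scalar identity already holds on an infinite set of admissible values. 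The computations are routine, so the only steps requiring care --- the main obstacle, such as it is --- are the bookkeeping that matches the coefficient of $x^{k-1}$ in the difference quotient with $(D^k m)(t)$, and the remark that $m(f(x))\neq 0$ forces genuine invertibility of $f(x)I-A$ (rather than a one-sided inverse), which follows from the commutativity of polynomials in $A$.
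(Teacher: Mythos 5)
Your proof is correct and follows essentially the same route as the paper: both establish the polynomial identity $m(t)-m(x)=(t-x)\sum_{k=1}^{\deg m}D^km(t)\,x^{k-1}$ (the paper via a telescoping rearrangement, you via the explicit factorisation of $t^j-x^j$) and then substitute $x=A$, using $m(A)=O$ to read off the inverse. Your extra bookkeeping matching the coefficient of $x^{k-1}$ with $(D^km)(t)$, and the remark on two-sidedness, simply make explicit what the paper leaves implicit.
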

\begin{proof}
Observe that
\begin{align}
    m(y) &= \sum_{k=0}^{\deg m} \left(D^km(t)-tD^{k+1}m(t)\right) y^k \nonumber \\ 
    m(y)-m(t) &= (y-t) \sum_{k=1}^{\deg m} D^km(t) \cdot y^{k-1}. \label{eq:meanval}
\end{align}
Suppose we set $y=A$ and $t=f(x)$.
By the Cayley-Hamilton theorem, $m(A)=O$ and hence,
\begin{align*}
-m(f(x))I = (A-f(x)I)\sum_{k=1}^{\deg m} D^k m(f(x)) A^{k-1}.
\end{align*}
The desired result then follows immediately.
\end{proof}

Next we derive an expression for $(xI-A)^{-1}$, where $A$ is the adjacency matrix of $\Gamma$.

\begin{prop}
\label{prop:cinv}
Suppose $A$ is the adjacency matrix of $\Gamma$ such that $ A = \left [\begin{smallmatrix}
A_{11}&A_{12}&A_{13}\\
A_{21}&A_{22}&A_{23}\\
A_{31}&A_{32}&A_{33}\end{smallmatrix} \right ]$ where $\{ A_{ii} \; : \; i \in 
\{1,2,3\} \}= \{ A[\mathcal T] \; : \; \mathcal T \in \Omega \}$.
Then
\begin{align*}
     (xI-A)^{-1} = \frac{(x-22)(x+8)\left(A + (x+4)I_{60}\right) + (9x+82)J_{60} + (x-22)I_3\otimes J_{20}}{(x-22)(x-2)(x+6)(x+8)}.
\end{align*}
\end{prop}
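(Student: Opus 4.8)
The plan is to use Lemma~\ref{lem:meanval_cayleyhamilton} with $f(x)=x$ (the identity rational function), applied to the adjacency matrix $A$ of $\Gamma$. First I would determine the minimal polynomial $m(x)$ of $A$. Since the spectrum of $A$ is $\{[22]^1,[2]^{42},[-6]^{15},[-8]^2\}$, the distinct eigenvalues are $22,2,-6,-8$, so
\[
m(x) = (x-22)(x-2)(x+6)(x+8),
\]
a degree-$4$ polynomial, which matches the denominator appearing in the proposed formula. By the lemma, $(xI-A)^{-1} = \frac{1}{m(x)}\sum_{k=1}^{4} D^k m(x)\, A^{k-1}$, so the inverse is expressed as a polynomial of degree $3$ in $A$ divided by $m(x)$.

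The next step is to reduce this cubic expression in $A$ to something linear in $A$ plus correction terms involving $J_{60}$ and $I_3\otimes J_{20}$. The key tool here is Lemma~\ref{lem:almostsrg}, which gives the quadratic relation $A^2 = -4A + 12I_{60} + 9J_{60} + I_3\otimes J_{20}$. I would use this to rewrite $A^2$, and then $A^3 = A\cdot A^2$, in terms of $A$, $I_{60}$, $J_{60}$, and $I_3\otimes J_{20}$. To handle $A^3$ I need to know how $A$ acts on the two correction matrices: since $\Gamma$ is $22$-regular we have $AJ_{60}=22J_{60}$, and from the equitable partition of Lemma~\ref{lem:equiPart} together with the block structure one computes $A(I_3\otimes J_{20})$ — each block row of $A$ has row sums $2$ within its own part and $10$ in each other part, which should yield a clean expression combining $I_3\otimes J_{20}$ and $J_{60}$. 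Substituting these back collapses all powers of $A$ up to the linear term.

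After the reduction, I expand the coefficients $D^k m(x)$ explicitly: writing $m(x)=x^4 + c_1 x^3 + c_2 x^2 + c_3 x + c_4$, the operator $D$ shifts coefficients, so $D m(x)=x^3+c_1x^2+c_2x+c_3$, $D^2 m(x)=x^2+c_1x+c_2$, $D^3 m(x)=x+c_1$, and $D^4 m(x)=1$. Then the sum $\sum_{k=1}^{4} D^k m(x) A^{k-1} = A^3 + (x+c_1)A^2 + (x^2+c_1x+c_2)A + (x^3+c_1x^2+c_2x+c_3)I$ is assembled, and after substituting the reductions for $A^2$ and $A^3$ I collect the coefficients of $A$, $I_{60}$, $J_{60}$, and $I_3\otimes J_{20}$ separately. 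The claim is that these collected coefficients match the numerator $(x-22)(x+8)(A+(x+4)I_{60})+(9x+82)J_{60}+(x-22)I_3\otimes J_{20}$.

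I expect the main obstacle to be bookkeeping rather than conceptual difficulty: the computation is a routine but error-prone polynomial manipulation, and the delicate part is correctly computing $A\cdot(I_3\otimes J_{20})$ from the equitable partition data, since a sign or coefficient slip there propagates through the $A^3$ term and corrupts the $J_{60}$ and $I_3\otimes J_{20}$ coefficients in the final answer. A useful sanity check is to verify the identity on each eigenspace: applying both sides to eigenvectors (the all-ones vector for eigenvalue $22$, vectors inside the partition blocks, and so on) and confirming the scalar identities $\frac{1}{x-\lambda}$ emerge, which would confirm the formula without re-deriving it purely symbolically.
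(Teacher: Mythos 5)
Your proposal follows essentially the same route as the paper: apply Lemma~\ref{lem:meanval_cayleyhamilton} with the minimal polynomial $m(x)=(x-22)(x-2)(x+6)(x+8)$, then use Lemma~\ref{lem:almostsrg} to reduce $A^2$ and $A^3$ (together with $AJ_{60}=22J_{60}$ and $A(I_3\otimes J_{20})=10J_{60}-8\,I_3\otimes J_{20}$ from the equitable partition) and collect coefficients. The computation you outline is exactly the one the paper carries out, so the proposal is correct.
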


\begin{proof}
Let $m(x) = (x-22)(x-2)(x+6)(x+8) =x^4-10x^3-244x^2-536x+2112$ be the minimal polynomial of $A$.
We obtain
\begin{align*}
    D^1m(x) &= x^3-10x^2-244x-536, \\
    D^2m(x) &= x^2-10x-244, \\
    D^3m(x) &= x-10, \\
    D^4m(x) &= 1.
\end{align*}
By Lemma~\ref{lem:meanval_cayleyhamilton}, we obtain
\begin{align}
    \label{eqn:mexp}
    m(x)(xI-A)^{-1} 
    = A^3 + (x-10) A^2 + (x^2-10x-244)A + (x^3-10x^2-244x-536)I.
\end{align}
By Lemma~\ref{lem:almostsrg},
\begin{align*}
    A^3 = -4A^2 + 12A + 208J_{60} - 8I_3 \otimes J_{20} = 28A - 48I_{60} + 172J_{60} -12I_3 \otimes J_{20}.
\end{align*}
Furthermore,
\(
    A (I_3 \otimes J_{20}) = (I_3 \otimes J_{20}) A = 10J_{60} - 8I_3 \otimes J_{20}.
\)
The lemma then follows by simplifying \eqref{eqn:mexp}.
\end{proof}

Let $\oplus$ denote the direct sum of matrices.
Applying Proposition~\ref{prop:cinv} and Lemma~\ref{lem:decaenjacobi} together with the fact that the characteristic polynomial of a graph belongs to $\mathbb{Z}[x]$, we obtain the following corollary:

\begin{cor}[Subgraph condition I]
\label{cor:decaen}
Let $\mathcal{T}, \mathcal U \in \Omega$ such that $\mathcal T \ne \mathcal U$.
    Suppose that $X = \left [\begin{smallmatrix}
X_{11}&X_{12}\\
X_{21}&X_{22}\end{smallmatrix} \right ]$ is an adjacency matrix for $\Gamma[\mathcal T \cup \mathcal S]$ where $\mathcal S \subseteq \mathcal U$, $X_{11} = X[\mathcal T]$ and $X_{22} = X[\mathcal S]$.
    Then the determinant of 
    \begin{equation*}
    \label{eqn:decaen}
     (x+4)I_{20+|\mathcal S|} + X + \frac{(9x+82)J_{20+|\mathcal S|}}{(x-22)(x+8)}+ \frac{J_{20}\oplus J_{|\mathcal S|}}{x+8}
    \end{equation*}
    is in $\displaystyle \frac{(x+6)^{5+|\mathcal S|}}{(x-22)(x-2)^{22-|\mathcal S|}(x+8)^2} \mathbb Z[x]$.
\end{cor}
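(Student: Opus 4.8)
The plan is to combine Jacobi's identity (Lemma~\ref{lem:decaenjacobi}) with the explicit inverse formula from Proposition~\ref{prop:cinv}, and then extract integrality information from the fact that characteristic polynomials of induced subgraphs of $\Gamma$ lie in $\mathbb Z[x]$. The target is to show that the determinant of a certain $(20+|\mathcal S|)\times(20+|\mathcal S|)$ matrix lands in a prescribed principal ideal of $\mathbb Z[x]$ (after clearing a denominator). The key observation is that the matrix appearing in the statement is essentially $(xI-A)^{-1}$ restricted to the index set $\mathcal T\cup\mathcal S$, up to scaling by the minimal polynomial $m(x)=(x-22)(x-2)(x+6)(x+8)$.

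\medskip

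First I would identify $\overline{\mathcal T\cup\mathcal S}$ as the complement of $\mathcal T\cup\mathcal S$ inside $V(\Gamma)$ and apply Lemma~\ref{lem:decaenjacobi} with the vertex set $\mathcal T\cup\mathcal S$ playing the role of $\mathcal T$ there, so that
\[
\det\!\big(xI - A[\overline{\mathcal T\cup\mathcal S}]\big) = \det(xI-A)\,\det\!\big((xI-A)^{-1}[\mathcal T\cup\mathcal S]\big).
\]
The left side is the characteristic polynomial of an induced subgraph of $\Gamma$, hence lies in $\mathbb Z[x]$; since $\det(xI-A)=(x-22)(x-2)^{42}(x+6)^{15}(x+8)^2$ is a known monic integer polynomial, the quantity $\det\!\big((xI-A)^{-1}[\mathcal T\cup\mathcal S]\big)$ is a ratio of two integer polynomials. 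Next I would substitute the closed form for $(xI-A)^{-1}$ from Proposition~\ref{prop:cinv}, which expresses the inverse as $\tfrac{1}{m(x)}$ times an explicit combination of $A$, $I$, $J$, and $I_3\otimes J_{20}$. Restricting to the index set $\mathcal T\cup\mathcal S$ (and using that $\mathcal T,\mathcal U\in\Omega$ are distinct equivalence classes, so $I_3\otimes J_{20}$ restricts to the block-diagonal $J_{20}\oplus J_{|\mathcal S|}$ while $J_{60}$ restricts to the full $J_{20+|\mathcal S|}$, and $A$ restricts to the given adjacency matrix $X$) yields precisely
\[
(xI-A)^{-1}[\mathcal T\cup\mathcal S] = \frac{(x-22)(x+8)\big(X+(x+4)I\big) + (9x+82)J + (x-22)\,(J_{20}\oplus J_{|\mathcal S|})}{m(x)}.
\]
Factoring out $(x-22)(x+8)$ from the numerator turns the bracketed matrix into exactly the matrix displayed in the corollary statement (after dividing through by $(x-2)(x+6)$, the remaining factor of $m(x)$).

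\medskip

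From here the calculation is bookkeeping. Taking determinants of an $(20+|\mathcal S|)$-dimensional matrix that is $\tfrac{(x-22)(x+8)}{m(x)}$ times the displayed matrix $Y$ gives
\[
\det\!\big((xI-A)^{-1}[\mathcal T\cup\mathcal S]\big) = \left(\frac{(x-22)(x+8)}{m(x)}\right)^{20+|\mathcal S|}\det Y,
\]
and multiplying by $\det(xI-A)$ yields the integer polynomial on the left of Jacobi's identity. Solving for $\det Y$ and tracking the exponents of each linear factor $(x-22),(x-2),(x+6),(x+8)$ that arise from the $42,15,2$ multiplicities in $\det(xI-A)$ against the powers contributed by $(x-22)^{20+|\mathcal S|}(x+8)^{20+|\mathcal S|}$ and $m(x)^{-(20+|\mathcal S|)}$ produces exactly the claimed membership $\det Y\in\frac{(x+6)^{5+|\mathcal S|}}{(x-22)(x-2)^{22-|\mathcal S|}(x+8)^2}\mathbb Z[x]$. \textbf{The main obstacle} I expect is the exponent arithmetic: one must verify that the surplus or deficit of each factor coming from $\det(xI-A)$ relative to $\big(\tfrac{(x-22)(x+8)}{m(x)}\big)^{20+|\mathcal S|}$ exactly matches the four exponents $1$, $22-|\mathcal S|$, $5+|\mathcal S|$, $2$ in the target ideal, and that no spurious denominator survives; the $|\mathcal S|$-dependence must cancel correctly across the $(x-2)$ and $(x+6)$ factors, which is where a sign or off-by-one slip would be easy to make. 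A secondary subtlety is justifying that the restriction of $(xI-A)^{-1}$ to the block indexed by $\mathcal T\cup\mathcal S$ indeed gives the $J_{20}\oplus J_{|\mathcal S|}$ block structure, which relies on $\mathcal T$ and $\mathcal U$ being genuinely distinct classes of $\Omega$ so that the $I_3\otimes J_{20}$ term has no cross terms between the two blocks.
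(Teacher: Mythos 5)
Your proposal is correct and follows essentially the same route as the paper: restrict the explicit inverse from Proposition~\ref{prop:cinv} to $\mathcal T\cup\mathcal S$ (which is exactly $\frac{1}{(x-2)(x+6)}$ times the displayed matrix), apply Lemma~\ref{lem:decaenjacobi}, and use $\det(xI-A[\overline{\mathcal T\cup\mathcal S}])\in\mathbb Z[x]$. The exponent bookkeeping you flag does close without incident, since $20+|\mathcal S|-42=-(22-|\mathcal S|)$ and $20+|\mathcal S|-15=5+|\mathcal S|$, while the $(x-22)$ and $(x+8)^2$ denominators come directly from $\det(xI-A)$.
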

\begin{proof}
Suppose $A$ is the adjacency matrix of $\Gamma$ such that $A[\mathcal{T} \cup \mathcal{S}] = X$.
By Proposition~\ref{prop:cinv},
\[
(x-2)(x+6)(xI-A)^{-1}[\mathcal{T} \cup \mathcal{S}] = (x+4)I_{20+|\mathcal S|} + X + \frac{(9x+82)J_{20+|\mathcal S|}}{(x-22)(x+8)}+ \frac{J_{20}\oplus J_{|\mathcal S|}}{x+8}.
\]
Since $\det(xI-A[\overline{\mathcal{T} \cup \mathcal{S}}]) \in \mathbb{Z}[x]$, the conclusion then follows from Lemma~\ref{lem:decaenjacobi}.
\end{proof}

\subsection{Five $4$-cycles}

In this subsection, we show that for each $\mathcal T \in \Omega$, the local graph $\Gamma[\mathcal T]$ is the disjoint union of five $4$-cycles.
Denote by $C_n$, the cycle graph on $n$ vertices.

\begin{lem}
\label{lem:40failfrom49}
Let $\mathcal T \in \Omega$.  Then $\Gamma[\mathcal T]$ is the disjoint union of at least $5$ cycles.
\end{lem}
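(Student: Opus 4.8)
The plan is to extract the lower bound on the number of cycles from a single instance of Corollary~\ref{cor:decaen} (Subgraph condition~I), namely the case $\mathcal S=\emptyset$. By Corollary~\ref{cor:part210} the graph $\Gamma[\mathcal T]$ is $2$-regular on $20$ vertices, hence a disjoint union of, say, $c$ cycles; since a single cycle is connected and $2$-regular, it has $2$ as a simple eigenvalue, so the multiplicity of the eigenvalue $2$ of $A(\Gamma[\mathcal T])$ equals $c$. Thus proving the lemma is equivalent to showing that this multiplicity is at least $5$. Note that Cauchy interlacing applied to the $42$-fold eigenvalue $2$ of $\Gamma$ only yields multiplicity at least $42-40=2$, so the De~Caen--Jacobi input really is essential here.

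Applying Corollary~\ref{cor:decaen} with $\mathcal S=\emptyset$ and writing $X=A[\mathcal T]$, I would first simplify the matrix under consideration: combining the two all-ones terms via $\frac{9x+82}{(x-22)(x+8)}+\frac{1}{x+8}=\frac{10(x+6)}{(x-22)(x+8)}$ reduces it to
\[
Y+\frac{10(x+6)}{(x-22)(x+8)}J_{20},\qquad Y\coloneqq (x+4)I_{20}+X .
\]
I would then evaluate its determinant over $\mathbb R(x)$ using the matrix determinant lemma applied to this rank-one update, exploiting that $\mathbf 1_{20}$ is an eigenvector of the $2$-regular matrix $X$ with eigenvalue $2$, so $Y\mathbf 1=(x+6)\mathbf 1$ and $\mathbf 1^{\transpose}Y^{-1}\mathbf 1=\tfrac{20}{x+6}$ (here $\det Y=\prod_i\det((x+4)I+A(C_{n_i}))\neq 0$, so $Y$ is invertible over $\mathbb R(x)$). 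Since $1+\frac{10(x+6)}{(x-22)(x+8)}\cdot\frac{20}{x+6}=\frac{(x-2)(x-12)}{(x-22)(x+8)}$, this gives
\[
\det\!\left(Y+\tfrac{10(x+6)}{(x-22)(x+8)}J_{20}\right)=P(x)\cdot\frac{(x-2)(x-12)}{(x-22)(x+8)},\qquad P(x)\coloneqq\det Y .
\]

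It then remains to read off the divisibility asserted by Corollary~\ref{cor:decaen}. Multiplying the displayed determinant by $\frac{(x-22)(x-2)^{22}(x+8)^2}{(x+6)^5}$ and cancelling, the requirement that the product lie in $\mathbb Z[x]$ becomes $P(x)\cdot\frac{(x-2)^{23}(x-12)(x+8)}{(x+6)^5}\in\mathbb Z[x]$; as $(x+6)$ is coprime to the remaining factors and $(x+6)^5$ is monic, this is exactly the condition $(x+6)^5\mid P(x)$. Finally, the multiplicity of the root $x=-6$ of $P(x)=\det((x+4)I+X)$ equals the multiplicity of the eigenvalue $2$ of $X$, which we identified as $c$; hence $c\ge 5$, proving the lemma. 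The only delicate point is the bookkeeping in this last step: one must track how the powers of $(x-22)$, $(x+8)$, and $(x-2)$ cancel so that the condition collapses precisely to the power of $(x+6)$ dividing $P(x)$, and confirm that this power is governed \emph{exactly} by the eigenvalue-$2$ multiplicity and not by any extra vanishing of $P$ at $-6$ (there is none, since in each cycle the value $2$ is attained only at the index $k=0$, contributing multiplicity exactly one per cycle).
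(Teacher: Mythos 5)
Your proof is correct and follows essentially the same route as the paper: both apply Corollary~\ref{cor:decaen} with $\mathcal S=\emptyset$, evaluate the determinant of the rank-one update via the matrix determinant lemma (using $X\mathbf 1 = 2\mathbf 1$) to get the factor $\frac{(x-2)(x-12)}{(x-22)(x+8)}$, and conclude that the $(x+6)$-adic valuation of $\det((x+4)I+X)$, which equals the number of components, must be at least $5$. The only cosmetic difference is that the paper invokes Perron--Frobenius to see that no eigenvalue orthogonal to $\mathbf 1$ contributes a root at $x=-6$, where you cite the explicit spectrum of cycles; these are the same observation.
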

\begin{proof}
Let $A$ be the adjacency matrix of $\Gamma$.
By Corollary~\ref{cor:decaen}, we have that
\[
\det \left (  (x+4)I + A[\mathcal T] + \frac{10(x+6)J}{(x-22)(x+8)} \right )
\]
must be in $\displaystyle \frac{(x+6)^5}{(x-22)(x-2)^{22}(x+8)^2} \mathbb Z[x]$.

By Corollary~\ref{cor:part210}, we know that $\Gamma[\mathcal T]$ is a disjoint union of cycles.
Let $\Psi$ be the multiset of eigenvalues (with multiplicity) of $A[\mathcal T]$ whose eigenspaces are orthogonal to the all-ones vector and let $c$ be the number of connected components of $\Gamma[\mathcal T]$.
By the matrix determinant lemma, we have
\[
\det \left (  (x+4)I + A[\mathcal T] + \frac{10(x+6)J}{(x-22)(x+8)} \right ) = \frac{(x+6)^c(x-12)(x-2)}{(x-22)(x+8)}\prod_{\lambda \in \Psi}(x+4+\lambda).
\]
The lemma follows since, by the Perron-Frobenius theorem~\cite[Chapter 8]{GoRo}, we have $|\lambda| < 2$ for each $\lambda \in \Psi$.
\end{proof}

Now we establish the main result of this subsection.

\begin{lem}
\label{lem:5squares}
Let $\mathcal T \in \Omega$.
Then $\Gamma[\mathcal T]$ is the disjoint union of five $4$-cycles.
\end{lem}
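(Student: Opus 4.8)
By Corollary~\ref{cor:part210} the induced graph $\Gamma[\mathcal T]$ is $2$-regular, and by Lemma~\ref{lem:40failfrom49} it is a disjoint union of $c \geqslant 5$ cycles covering its $20$ vertices. Since each cycle has at least $3$ vertices, $c \leqslant \lfloor 20/3 \rfloor = 6$, so $c \in \{5,6\}$ and the multiset of cycle lengths is one of the finitely many partitions of $20$ into $5$ or $6$ parts, each at least $3$. The goal is to eliminate every such multiset except $(4,4,4,4,4)$, and I would do this by strengthening the divisibility test that produced Lemma~\ref{lem:40failfrom49}.

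Recall that in Lemma~\ref{lem:40failfrom49} the matrix determinant lemma expressed the relevant determinant in closed form purely in terms of the spectrum of $A[\mathcal T]$, equivalently in terms of the cycle lengths, and the membership condition of Corollary~\ref{cor:decaen} (taken with $\mathcal S=\varnothing$) then forced only $c \geqslant 5$. To extract finer information I would apply Corollary~\ref{cor:decaen} with a nonempty $\mathcal S \subseteq \mathcal U$ for a second part $\mathcal U \neq \mathcal T$, obtaining a divisibility condition for $\Gamma[\mathcal T \cup \mathcal S]$ that now also involves the bipartite block $A[\mathcal T,\mathcal S]$. To control that block I would read off the $(1,1)$- and $(1,2)$-blocks of the identity $A^2+4A-12I_{60}=9J_{60}+I_3\otimes J_{20}$ from Lemma~\ref{lem:almostsrg}: these give $(A^2)_{uv}=6$ whenever $u\sim v$ lie in a common part and $(A^2)_{uv}=5$ whenever $u \sim v$ lie in different parts, so every edge inside $\mathcal T$ has exactly $6$ common neighbours and every cross-part edge has exactly $5$. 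The simplest instance takes $\mathcal S=\{w\}$ a single vertex of $\mathcal U$, where $A[\mathcal T,\{w\}]$ is a $0/1$ vector with exactly ten ones (Corollary~\ref{cor:part210}) whose overlaps with each cycle of $\Gamma[\mathcal T]$ are constrained by these common-neighbour counts; feeding this into the subgraph condition should yield, for each candidate length multiset, a polynomial divisibility that fails unless $c=5$ and every cycle has length $4$.

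The main obstacle is exactly that the determinant in Corollary~\ref{cor:decaen} depends on the inter-part adjacencies and not on the cycle lengths alone, so the delicate point is to show that the divisibility nevertheless isolates the cycle-length data. A complementary handle comes from the same $(1,1)$-block, which yields $A_{12}A_{12}^{\transpose}+A_{13}A_{13}^{\transpose}=10J_{20}+12I_{20}-4A[\mathcal T]-A[\mathcal T]^2$; the right-hand side has eigenvalue $200$ on the all-ones vector and $(2-\mu)(\mu+6)$ on each eigenvector of $A[\mathcal T]$ orthogonal to it, so this Gram matrix has rank $21-c$, tying $c$ to the rank of the $0/1$ blocks $A_{12},A_{13}$ (each with constant row and column sums $10$). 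Combining this rank bookkeeping with the single-vertex subgraph condition to exclude every cycle of length $\neq 4$ is the crux; once only $4$-cycles survive, $c=5$ is forced by $\sum_i n_i = 20$.
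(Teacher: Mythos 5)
Your setup is the right one and matches the paper's: attach a single vertex $u\in\overline{\mathcal T}$ and apply the De~Caen--Jacobi subgraph condition to $\Gamma[\mathcal T\cup\{u\}]$. But the proposal stops exactly where the proof has to happen. You write that feeding the single-vertex condition into Corollary~\ref{cor:decaen} ``should yield'' a failure for every length multiset other than $(4,4,4,4,4)$, and you yourself identify the obstruction --- the determinant depends on how the ten neighbours of $u$ are distributed over the cycles, not on the cycle lengths alone --- and then label overcoming it as ``the crux'' without doing it. That is the entire content of the lemma. The rank identity for $A_{12}A_{12}^{\transpose}+A_{13}A_{13}^{\transpose}$ is correct (its spectrum is $200$ on $\mathbf 1$ and $(2-\mu)(\mu+6)$ otherwise, giving rank $21-c$), but you never extract a contradiction from it, and it cannot by itself distinguish, say, five $4$-cycles from a $5$-cycle plus other cycles with $c=5$.

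What actually closes the gap in the paper is a specific evaluation at $x=-6$. Writing $\mathcal T=\bigcup_i\mathcal C_i$ and $\omega(x)=(x+6)\det(R-Q^{\transpose}P^{-1}Q)$ for the Schur complement of the $21\times 21$ block, integrality of $\det(xI-A[\overline{\mathcal T\cup\{u\}}])$ forces $\omega(-6)=0$; a computation with the minimal polynomial of each cycle shows $(x+6)\bigl((x+4)I+A[\mathcal C_i]\bigr)^{-1}$ evaluates at $x=-6$ to $J_{|\mathcal C_i|}/|\mathcal C_i|$, whence $\omega(-6)=5-\sum_i(\mathbf 1^{\transpose}\mathbf u[\mathcal C_i])^2/|\mathcal C_i|$. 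Cauchy--Schwarz gives $\sum_i(\mathbf 1^{\transpose}\mathbf u[\mathcal C_i])^2/|\mathcal C_i|\geqslant 100/20=5$ with equality only when $\mathbf 1^{\transpose}\mathbf u[\mathcal C_i]=|\mathcal C_i|/2$ for every $i$; integrality of the left side then forces every $|\mathcal C_i|$ to be even, and with $c\geqslant 5$ from Lemma~\ref{lem:40failfrom49} and each even cycle of length at least $4$, the only possibility is five $4$-cycles. Note that the argument does not check candidate multisets one by one: it shows that \emph{no} admissible neighbour distribution satisfies the condition when some cycle is odd. Without this (or an equivalent) mechanism, your proposal is a plan rather than a proof.
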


\begin{proof}
Suppose that $\Gamma[\mathcal T]$ is a graph of order $20$ equals to a disjoint union of cycles.
We will show that each cycle of $\Gamma[\mathcal T]$ must be an even cycle.
Let $u \in \overline{\mathcal T}$ and let $A$ be the adjacency matrix of $\Gamma$.
Applying Proposition~\ref{prop:cinv}, we write
    \begin{align*}
        (x-2)(x+6)(xI-A)^{-1}[\mathcal T \cup \{u\}] = \begin{pmatrix}
        P & Q \\
        Q^{\transpose} & R
        \end{pmatrix}
    \end{align*}
where
\begin{align*}
    P &= (x+4)I + \frac{10(x+6)J}{(x-22)(x+8)} + A[\mathcal T], \\
    Q &= \mathbf{u}+\frac{9x+82}{(x-22)(x+8)}\mathbf{1}, \\
    R &= (x+4)+\frac{10(x+6)}{(x-22)(x+8)} = \frac{x^3-10x^2-222x-644}{(x-22)(x+8)},
\end{align*}
and $\mathbf{u} = A[\mathcal{T},\{u\}]$.
Using the properties of Schur complement \cite[Theorem 3.1.1]{schurcomplement}, we can write
    \begin{align*}
        \det \left( (xI-A)^{-1}[\mathcal T\cup \{u\}] \right) &= \frac{\det P \cdot \det(R-Q^{\transpose} P^{-1} Q)}{(x-2)^{21} (x+6)^{21}}.
    \end{align*}
From Lemma~\ref{lem:decaenjacobi}, we obtain
\begin{align*}
    \det(xI - A[\overline {\mathcal T\cup \{u\}}]) &= \frac{(x-22) (x-2)^{21} (x+8)^2}{(x+6)^6} \cdot \det P \cdot \det(R-Q^{\transpose} P^{-1} Q).
\end{align*}
As in the proof of Lemma~\ref{lem:40failfrom49}, note that the multiplicity of $(x+6)$ as an eigenvalue of $P$ is equal to the number of disjoint cycles of $\Gamma[\mathcal T]$ and this number is at most $6$.
Therefore,
\[
\frac{(x-22) (x-2)^{21} (x+8)^2}{(x+6)^6} \cdot \det P
\]
in its simplest form cannot have $(x+6)$ as a factor of the numerator.

Applying the Sherman-Morrison formula \cite{shermanmorrison}, we have
\begin{align*}
    P^{-1} = ((x+4)I+A[\mathcal T])^{-1} - \frac{10 J}{(x-12)(x-2)(x+6)}.
\end{align*}
Note that
\begin{align*}
    \det(R-Q^{\transpose} P^{-1} Q) &=   \frac{x^4+6x^3-102x^2-1016x-2604}{(x-12)(x+6)(x+8)}
-\mathbf{u}^\transpose  ((x+4)I+A[\mathcal T])^{-1} \mathbf{u}.
\end{align*}
Now suppose that $\mathcal T= \bigcup_{i=1}^t \mathcal C_i$ where $\Gamma[\mathcal C_i] = C_{|\mathcal C_i|}$ for each $i \in \{1,\dots,t\}$.
Fix $i \in \{1,\dots,t\}$ and by Lemma~\ref{lem:meanval_cayleyhamilton}, we obtain
\begin{align*}
((x+4)I+A[\mathcal C_i])^{-1} = -\frac{1}{m_i(-x-4)} \sum_{k=1}^{\deg m_i} D^k m_i(-x-4) A[\mathcal C_i]^{k-1}
\end{align*}
where $m_i(x) = (x-2) q_i(x)$ is the minimal polynomial of $C_{|\mathcal C_{i}|}$.
Hence,
\begin{align}
\label{eq:tau}
(x+6)((x+4)I+A[\mathcal C_i])^{-1} = \frac{1}{q_i(-x-4)} \sum_{k=1}^{\deg m_i} D^k m_i(-x-4) A[\mathcal C_i]^{k-1}
\end{align}
Define $\omega(x) \coloneqq (x+6) \det(R-Q^{\transpose} P^{-1} Q)$ and $\tau_i(x) \coloneqq (x+6) ((x+4)I+A[\mathcal C_i])^{-1}$. 
Then
\begin{align*}
   \omega(x) &= \frac{x^4+6x^3-102x^2-1016x-2604}{(x-12)(x+8)} -   \sum_{i=1}^t \mathbf{u}[\mathcal C_i]^\transpose\tau_i(x) \mathbf{u}[\mathcal C_i].
\end{align*}
Note that $\det(xI - A[\overline {\mathcal T\cup \{u\}}]) \in \mathbb{Z}[x]$.
Thus, the denominator of $\det(R-Q^{\transpose} P^{-1} Q)$ in its simplest form is not divisible by $(x+6)$, since the other factor of $\det(xI - A[\overline {\mathcal T\cup \{u\}}])$ does not have $(x+6)$ as a factor in its numerator, as argued above.
Hence, we have $\omega(-6) = 0$.
Next, we will show that $\omega(-6) = 0$ implies that $|\mathcal C_i|$ is even for all $i$.
By \eqref{eq:tau}, we have
\begin{align*}
    \tau_i(-6) &= \frac{1}{q_i(2)} \sum_{k=1}^{\deg m_i} D^k m_i(2) A[\mathcal C_i]^{k-1}.
\end{align*}
Using \eqref{eq:meanval}, we obtain
\begin{align*}
    (A[\mathcal C_i]-2I)\sum_{k=1}^{\deg m_i} D^k m_i(2) A[\mathcal C_i]^{k-1} = m_i(A[\mathcal C_i])-m_i(2)I = 0,
\end{align*}
which implies that
\[
    \sum_{k=1}^{\deg m_i} D^k m_i(2) A[\mathcal C_i]^{k-1} = \varepsilon J_{|\mathcal C_i|}
\]
for some constant $\varepsilon$.
Multiplying both sides from the right by the all-ones vector yields
\[
    \varepsilon = \frac{1}{|\mathcal C_i|} \sum_{k=1}^{\deg m_i} D^k m_i(2) \cdot 2^{k-1} = \frac{m_i'(2)}{|\mathcal C_i|} = \frac{q_i(2)}{|\mathcal C_i|}.
\]
It follows that
\begin{align*}
    \tau_i(-6) = \frac{1}{q_i(2)} \cdot \frac{q_i(2)}{|\mathcal C_i|} J_{|\mathcal C_i|} = \frac{J_{|\mathcal C_i|}}{|\mathcal C_i|}
\end{align*}
and therefore, 
\[
   \omega(-6) = 5 -  \sum_{i=1}^t \frac{(\mathbf 1^\transpose \mathbf{u}[\mathcal C_i])^2}{|\mathcal C_i|}.
\]
By the Cauchy-Schwarz inequality, we have
\begin{align*}
    \sum_{i=1}^t \frac{(\mathbf 1^\transpose \mathbf{u}[\mathcal C_i])^2}{|\mathcal C_i|} \geqslant \frac{(\mathbf{1}^{\transpose} \mathbf{u})^2}{\sum_{i=1}^t |\mathcal C_i|} = \frac{100}{20} = 5
\end{align*}
with equality if and only if there exists $\delta$ such that $\mathbf{1}^{\transpose} \mathbf{u}[\mathcal C_i] = \delta |\mathcal C_i|$ for all $i$.
Since $\omega(-6) = 0$, there exists $\delta$ such that $\mathbf{1}^{\transpose} \mathbf{u}[\mathcal C_i] = \delta |\mathcal C_i|$ for all $i$.
Moreover, we have $\delta = 1/2$ since $20\delta = \mathbf{1}^{\transpose} \mathbf{u} = 10$.
Since $\mathbf{1}^{\transpose} \mathbf{u}[\mathcal C_i] = |\mathcal C_i|/2$ is an integer, each cycle of $\Gamma[\mathcal T]$ must be even.
Combined with Lemma~\ref{lem:40failfrom49}, we conclude that $\Gamma[\mathcal T]$ is the disjoint union of five 4-cycles.
\end{proof}

\subsection{Pairwise compatible neighbourhoods}

Let $\mathcal T \in \Omega$.
In this subsection, we consider graphs resulting from joining an independent set of vertices from some $\mathcal U \in \Omega$ where $\mathcal U \ne \mathcal T$ to the vertex set of $\Gamma[\mathcal T]=5C_4$.
The next lemma provides a necessary algebraic condition for such graph to be an induced subgraph of $\Gamma$.

\begin{lem}[Subgraph condition II]
\label{lem:decaenBlock}
Let $\mathcal{T}, \mathcal U \in \Omega$ such that $\mathcal T \ne \mathcal U$.
    Suppose $\mathcal S \subseteq \mathcal U$ such that $|\mathcal S| = s$ and the vertices in $\mathcal S$ are pairwise nonadjacent.
    Let $A$ be the adjacency matrix of $\Gamma$.
    Suppose $A[\mathcal T,\mathcal T\cup \mathcal S] = \begin{bmatrix}X&B\end{bmatrix}$, where $X = A[\mathcal T]$.
    Then the determinant of
    \[
    (x+4)I+\frac{10(x^2 + 4x-30)J}{(x-12)(x+6)(x+8)}-\frac{B^\transpose B}{x+4}-\frac{B^\transpose (X^2-(x+4)X) B}{(x+2)(x+4)(x+6)}
    \]
    is in $\displaystyle \frac{(x+6)^s}{(x-12)(x-2)^{23-s}(x+2)^5(x+4)^{10}(x+8)} \mathbb Z[x]$.
\end{lem}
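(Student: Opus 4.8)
The plan is to specialise Subgraph condition~I (Corollary~\ref{cor:decaen}) to the present situation and then take a Schur complement to pass from a $(20+s)\times(20+s)$ determinant down to the $s\times s$ matrix in the statement. Since the vertices of $\mathcal S$ are pairwise nonadjacent, the adjacency matrix of $\Gamma[\mathcal T\cup\mathcal S]$ is $\left[\begin{smallmatrix} X & B\\ B^\transpose & O\end{smallmatrix}\right]$, so the matrix whose determinant Corollary~\ref{cor:decaen} controls becomes
\[
M=\begin{bmatrix} P & Q\\ Q^\transpose & R\end{bmatrix},
\]
where, after combining the two all-ones contributions via $\tfrac{9x+82}{(x-22)(x+8)}+\tfrac{1}{x+8}=\tfrac{10(x+6)}{(x-22)(x+8)}$, the block $P=(x+4)I_{20}+X+\tfrac{10(x+6)}{(x-22)(x+8)}J_{20}$ is exactly the matrix analysed in Lemmas~\ref{lem:40failfrom49} and~\ref{lem:5squares}, while $Q=B+\tfrac{9x+82}{(x-22)(x+8)}J_{20,s}$ and $R=(x+4)I_s+\tfrac{10(x+6)}{(x-22)(x+8)}J_s$. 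By Corollary~\ref{cor:decaen}, $\det M\in\tfrac{(x+6)^{5+s}}{(x-22)(x-2)^{22-s}(x+8)^2}\mathbb Z[x]$, and in fact $\det M$ equals $\det(xI-A[\overline{\mathcal T\cup\mathcal S}])$ times $\tfrac{(x+6)^{5+s}}{(x-22)(x-2)^{22-s}(x+8)^2}$ by Lemma~\ref{lem:decaenjacobi} and Proposition~\ref{prop:cinv}.

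First I would compute $\det P$ exactly. Since $\Gamma[\mathcal T]=5C_4$ by Lemma~\ref{lem:5squares}, the all-ones vector $\mathbf 1_{20}$ is an eigenvector of $X$ for the eigenvalue $2$, and the remaining eigenvalues of $X$ are $2$ (multiplicity $4$), $0$ (multiplicity $10$), and $-2$ (multiplicity $5$). The matrix determinant lemma then gives
\[
\det P=\bigl(x+6+20\alpha\bigr)(x+6)^{4}(x+4)^{10}(x+2)^{5}=\frac{(x+6)^{5}(x-12)(x-2)(x+4)^{10}(x+2)^{5}}{(x-22)(x+8)},
\]
where $\alpha=\tfrac{10(x+6)}{(x-22)(x+8)}$ and $x+6+20\alpha=(x+6)\tfrac{(x-12)(x-2)}{(x-22)(x+8)}$.

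Next I would identify the Schur complement $R-Q^\transpose P^{-1}Q$ with the matrix $W$ in the statement. Using the Sherman--Morrison step exactly as in the proof of Lemma~\ref{lem:5squares} gives $P^{-1}=N-\tfrac{10}{(x-12)(x-2)(x+6)}J_{20}$ with $N:=((x+4)I+X)^{-1}$; applying Lemma~\ref{lem:meanval_cayleyhamilton} to the minimal polynomial $m_X(t)=t^3-4t$ of $X$ then yields the closed form
\[
N=\frac{I}{x+4}+\frac{X^{2}-(x+4)X}{(x+2)(x+4)(x+6)}.
\]
Expanding $Q^\transpose P^{-1}Q$ and repeatedly using $X\mathbf 1_{20}=2\mathbf 1_{20}$ (hence $N\mathbf 1_{20}=\tfrac{1}{x+6}\mathbf 1_{20}$) together with $\mathbf 1_{20}^\transpose B=10\,\mathbf 1_s^\transpose$ (each vertex of $\mathcal S\subseteq\overline{\mathcal T}$ has exactly $10$ neighbours in $\mathcal T$ by Corollary~\ref{cor:part210}) collapses every cross term into a multiple of $J_s$, leaving $Q^\transpose P^{-1}Q=B^\transpose N B+c_J J_s$ for an explicit scalar $c_J$. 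Substituting the closed form for $N$ turns $-B^\transpose N B$ into precisely the two $B$-terms of $W$, so what remains is to check that the coefficient of $J_s$ in $R-Q^\transpose P^{-1}Q$ equals $\tfrac{10(x^2+4x-30)}{(x-12)(x+6)(x+8)}$.

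I expect this last verification to be the main obstacle: it is a routine but error-prone rational-function identity. It is organised cleanly by the factorisation $1+\tfrac{2(9x+82)}{(x-22)(x+8)}=\tfrac{(x+6)(x-2)}{(x-22)(x+8)}$, which makes the contributions of the Sherman--Morrison correction collapse; the identity can be confirmed by comparing degree-$5$ numerators. Once $R-Q^\transpose P^{-1}Q=W$ is established, the Schur complement identity $\det M=\det P\cdot\det W$ gives $\det W=\det M/\det P$, and dividing the membership $\det M\in\tfrac{(x+6)^{5+s}}{(x-22)(x-2)^{22-s}(x+8)^2}\mathbb Z[x]$ by the explicit $\det P$ above cancels the factors $(x-22)$ and one copy of $(x+8)$ and produces exactly $\det W\in\tfrac{(x+6)^{s}}{(x-12)(x-2)^{23-s}(x+2)^5(x+4)^{10}(x+8)}\mathbb Z[x]$, as claimed. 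As a sanity check, $\det W$ times the target denominator equals $(x+6)^s\det(xI-A[\overline{\mathcal T\cup\mathcal S}])$; for $s=0$ this reads $\det(xI-A[\overline{\mathcal T}])=(x-12)(x-2)^{23}(x+2)^5(x+4)^{10}(x+8)$, a degree-$40$ polynomial, matching the order of $\overline{\mathcal T}$.
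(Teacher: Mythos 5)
Your proposal is correct and follows essentially the same route as the paper: apply Corollary~\ref{cor:decaen} to the block adjacency matrix $\left[\begin{smallmatrix} X & B\\ B^\transpose & O\end{smallmatrix}\right]$, take the Schur complement with the same blocks $P$, $Q$, $R$, compute $\det P$ and $P^{-1}$ explicitly from $\Gamma[\mathcal T]=5C_4$ (Lemma~\ref{lem:5squares}) together with $B^\transpose\mathbf 1_{20}=10\,\mathbf 1_s$, identify $R-Q^\transpose P^{-1}Q$ with the stated matrix, and divide the resulting membership by $\det P$. The rational-function bookkeeping you flag as the main obstacle is exactly the content of the paper's equation for $R-Q^\transpose P^{-1}Q$, and your intermediate identities all check out.
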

\begin{proof}
    Using properties of the Schur complement, we can write
    \begin{align}
    \label{eqn:cor46}
            \det \left (  (x+4)I+\begin{bmatrix}
     X & B \\
     B^\transpose & O
     \end{bmatrix}+\frac{(9x+82)J}{(x-22)(x+8)}+ \frac{J_{20}\oplus J_{s}}{x+8} \right ) &= \det P \cdot \det(R-Q^\transpose P^{-1} Q),
    \end{align}
    where
    \begin{align*}
    P &= (x+4)I_{20} + \frac{10(x+6)}{(x-22)(x+8)}J_{20} + X;\\
    Q &= B + \frac{9x+82}{(x-22)(x+8)}J_{20,s};\\
    R &= (x+4)I_{s} + \frac{10(x+6)}{(x-22)(x+8)}J_{s}.
\end{align*}
By Lemma~\ref{lem:5squares}, the induced subgraph $\Gamma[\mathcal T] = 5C_4$.
Thus, we can compute $\det P$ and $P^{-1}$ as follows:
\begin{align}\label{eqn:det}
    \det P &= \frac{(x-12)(x-2)(x+2)^5(x+4)^{10}(x+6)^5}{(x-22)(x+8)}; \\
    \nonumber
    P^{-1} &= \frac{I_{20}}{x+4} - \frac{10J_{20}}{(x-12)(x-2)(x+6)} + \frac{X^2-(x+4)X}{(x+2)(x+4)(x+6)}.
\end{align}
By Lemma~\ref{lem:equiPart}, we know that $B^{\transpose} \mathbf 1_{20} = 10 \mathbf 1_{s}$.
Hence
\begin{align}
\label{eqn:rqpq}
    R-Q^\transpose P^{-1} Q =   (x+4)I_s+\frac{10(x^2 + 4x-30)J_s}{(x-12)(x+6)(x+8)}-\frac{B^\transpose B}{x+4}-\frac{B^\transpose (X^2-(x+4)X) B}{(x+2)(x+4)(x+6)}.
\end{align}
The lemma then follows from Corollary~\ref{cor:decaen} with \eqref{eqn:cor46}, \eqref{eqn:det}, and \eqref{eqn:rqpq}.
\end{proof}

Let $u \in \overline{\mathcal T}$.
First we consider graphs resulting from joining $u$ to $\mathcal{T}$.
Based on Lemma~\ref{lem:decaenBlock} above, the following lemma yields conditions on the adjacency of $u$ with the vertices in $\mathcal{T}$.

\begin{lem}
\label{lem:21}
    Let $\mathcal T\in \Omega$ and $u \in \overline{\mathcal T}$.
    Let $A$ be the adjacency matrix of $\Gamma$.
    Suppose that $\mathbf u = A[\mathcal T, \{u\}]$.
    Then
        \[
        \mathbf u^\transpose \mathbf u = 10, \quad \mathbf u^\transpose A[\mathcal T]\mathbf u = 6, \quad \text{ and } \quad  \mathbf u^\transpose A[\mathcal T]^2\mathbf u = 28.
        \]
\end{lem}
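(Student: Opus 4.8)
The plan is to apply Lemma~\ref{lem:decaenBlock} in the special case $s=1$, $\mathcal S=\{u\}$, so that $B=\mathbf u$ is a single column vector and the matrix in that lemma collapses to a $1\times 1$ determinant, i.e.\ a scalar rational function. First I would write out the scalar expression
\[
\Phi(x) = (x+4)+\frac{10(x^2+4x-30)}{(x-12)(x+6)(x+8)}-\frac{\mathbf u^\transpose \mathbf u}{x+4}-\frac{\mathbf u^\transpose (X^2-(x+4)X)\mathbf u}{(x+2)(x+4)(x+6)},
\]
where $X=A[\mathcal T]$. By the lemma, with $s=1$ this quantity must lie in $\displaystyle \frac{(x+6)}{(x-12)(x-2)^{22}(x+2)^5(x+4)^{10}(x+8)}\,\mathbb Z[x]$.

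The key observation is that the three unknown scalars $\mathbf u^\transpose\mathbf u$, $\mathbf u^\transpose X\mathbf u$, and $\mathbf u^\transpose X^2\mathbf u$ appear \emph{linearly} in $\Phi(x)$, and the membership condition forces $\Phi$ to be a genuine polynomial after clearing the stated denominator. The poles of $\Phi$ that are \emph{not} permitted by the target denominator must therefore cancel. Concretely, I would clear denominators by multiplying through by $(x-12)(x+6)(x+8)(x+4)(x+2)$ and examine the resulting expression: the numerator must be divisible by $(x+6)$, and the spurious simple poles at $x=-4$ and $x=-2$ (which the target denominator only allows to prescribed orders) must have vanishing residues. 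Since $\mathbf u^\transpose\mathbf u$, $\mathbf u^\transpose X\mathbf u=\mathbf u^\transpose A[\mathcal T]\mathbf u$, and $\mathbf u^\transpose X^2\mathbf u=\mathbf u^\transpose A[\mathcal T]^2\mathbf u$ enter linearly, each independent pole-cancellation condition yields one linear equation in these three unknowns. I expect exactly three such conditions---for instance, the residue at $x=-4$ (where $\tfrac{\mathbf u^\transpose\mathbf u}{x+4}$ and the $X$-term both contribute), the residue at $x=-2$ (controlling the pole order of the cycle-determinant factor $(x+2)^5$), and one further condition coming from the required vanishing at $x=-6$ (the $\omega(-6)=0$ type condition already exploited in the proof of Lemma~\ref{lem:5squares}).

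Solving this linear system should pin down the triple $(\mathbf u^\transpose\mathbf u,\ \mathbf u^\transpose A[\mathcal T]\mathbf u,\ \mathbf u^\transpose A[\mathcal T]^2\mathbf u)$ uniquely, and I anticipate the solution is $(10,6,28)$. The value $\mathbf u^\transpose\mathbf u=10$ is already known independently from Corollary~\ref{cor:part210} (every $u\in\overline{\mathcal T}$ has exactly $10$ neighbours in $\mathcal T$), which provides a useful consistency check and effectively reduces the problem to two equations in the two remaining unknowns. The main obstacle will be carefully tracking the pole orders when clearing denominators---since $\Gamma[\mathcal T]=5C_4$ by Lemma~\ref{lem:5squares}, the matrix $X$ has minimal polynomial $(x-2)(x+2)x$, so $X^2$, $X$, and $I$ span the relevant algebra and the factor $\det P$ contributes the $(x+2)^5(x+4)^{10}(x+6)^5$ structure; I must ensure that the residues I extract genuinely correspond to \emph{independent} linear constraints rather than degenerate ones. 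Once the three moment conditions are in hand, no further work is needed: the conclusion of the lemma is precisely these three scalar identities.
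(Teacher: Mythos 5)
Your overall strategy---apply Lemma~\ref{lem:decaenBlock} with $s=1$ so that the determinant collapses to a scalar $\Phi(x)$ in which the three moments enter linearly, and extract linear equations from the membership condition---is the paper's strategy. But the way you propose to extract the equations does not work. The target set $\frac{(x+6)}{(x-12)(x-2)^{22}(x+2)^5(x+4)^{10}(x+8)}\mathbb Z[x]$ \emph{permits} poles at $x=-4$ of order up to $10$ and at $x=-2$ of order up to $5$, while $\Phi$ only has simple poles there; so membership imposes no condition at $x=-4$ or $x=-2$. Worse, with the true values $(10,6,28)$ the residues of $\Phi$ at $x=-4$ and $x=-2$ are $-3$ and $-2$ respectively, so demanding that they vanish would yield equations that are actually false. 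The only point where membership bites is $x=-6$: writing $\Phi(x)=N(x)/\bigl((x-12)(x+8)(x+6)(x+4)(x+2)\bigr)$ with $N$ a polynomial, the condition forces $(x+6)^2\mid N(x)$ (one order because $\Phi$ must lose its a priori simple pole at $-6$, a second because the permitted numerator carries the factor $(x+6)^s=(x+6)$ while the permitted denominator has no factor $x+6$). That order-two vanishing gives \emph{two} independent linear conditions, not the single one you anticipate.

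This miscount is fatal as stated: with $\mathbf u^\transpose\mathbf u=10$ supplied by the equitable partition (Lemma~\ref{lem:equiPart}, Corollary~\ref{cor:part210}), you are left with two unknowns $\alpha=\mathbf u^\transpose A[\mathcal T]^2\mathbf u$ and $\beta=\mathbf u^\transpose A[\mathcal T]\mathbf u$, and a single equation at $x=-6$ leaves them underdetermined, while your two substitute equations at $x=-4,-2$ are spurious. The repair is exactly the paper's proof: substitute $\mathbf u^\transpose\mathbf u=10$ first, clear denominators, and reduce the resulting polynomial modulo $(x+6)^2$; this produces the two equations $16\alpha-4\beta=424$ and $132\alpha+48\beta=3984$, whence $\alpha=28$ and $\beta=6$.
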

\begin{proof}
By Lemma~\ref{lem:equiPart}, the vertex $u$ is adjacent to $10$ vertices in $\Gamma[\mathcal T]$.
Thus, $\mathbf 1^\transpose \mathbf u = 10$.
By Lemma~\ref{lem:decaenBlock},
\begin{align*}
        & (x+4)+\frac{10(x^2 + 4x-30)}{(x-12)(x+6)(x+8)}-\frac{10}{x+4}-\frac{\mathbf u^\transpose A[\mathcal T]^2\mathbf u}{(x+2)(x+4)(x+6)}+\frac{\mathbf u^\transpose A[\mathcal T]\mathbf u}{(x+2)(x+6)}
\end{align*}
belongs to $\displaystyle \frac{(x+6)}{(x-12)(x-2)^{22}(x+2)^5(x+4)^{10}(x+8)}\mathbb Z[x]$.
Set $\alpha = \mathbf u^\transpose A[\mathcal T]^2 \mathbf u$ and $\beta = \mathbf u^\transpose  A[\mathcal T] \mathbf u$.
It follows that 
\[
(x+2)(x^5+10x^4-88x^3-1444x^2-5468x-4656)  -(x-12)(x+8) \left(\alpha - (x+4)\beta \right)
\]
is in $(x+6)^2\mathbb Z[x]$.
Reducing modulo $(x+6)^2$ yields the simultaneous equations
$16\alpha - 4 \beta = 424$ and $132\alpha +48\beta = 3984$.
Thus $\alpha = 28$ and $\beta = 6$.
\end{proof}

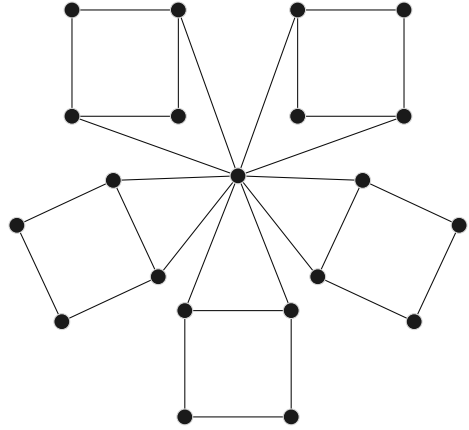
\begin{figure}[htbp]
	\centering
		\begin{tikzpicture}[rotate=90]
			\tikzstyle{vertex}=[circle,thin,draw=black!15,fill=black!90, inner sep=0pt, minimum width=6pt]
			\tikzstyle{edge}=[draw=black!95,-]
			\tikzstyle{fedge}=[draw=black!75,-]
	
			\begin{scope}[xshift=0cm]
				\node[vertex] (center) at (0:0) {};
			\end{scope}
			\begin{scope}[xshift=-2.5cm]
				\newdimen\rad
				\rad=1cm
				\def\angle{90}
				\def\offsetI{-45}
				\foreach \x in {0,...,3}
				{
					\draw[edge] (\offsetI+\angle*\x:\rad) -- (\offsetI+\angle+\angle*\x:\rad);
			    }
			    \foreach \x in {0,...,1}
				{
					\draw[edge] (\offsetI+\angle*\x:\rad) -- (center);
			    }
				\foreach \x in {0,...,3}
				{
					\draw (\offsetI+\angle*\x:\rad) node[vertex] {};
			    }
			\end{scope}
						\begin{scope}[xshift=-1cm, yshift=2cm]
				\newdimen\rad
				\rad=1cm
				\def\angle{90}
				\def\offsetI{-20}
				\def\offsetII{-18}
				\foreach \x in {0,...,3}
				{
					\draw[edge] (\offsetI+\angle*\x:\rad) -- (\offsetI+\angle+\angle*\x:\rad);
			    }
			    \foreach \x in {3,0}
				{
					\draw[edge] (\offsetI+\angle*\x:\rad) -- (center);
			    }
				\foreach \x in {0,...,3}
				{
					\draw (\offsetI+\angle*\x:\rad) node[vertex] {};
			    }
			\end{scope}
			\begin{scope}[xshift=-1cm, yshift=-2cm]
				\newdimen\rad
				\rad=1cm
				\def\angle{90}
				\def\offsetI{-70}
				\def\offsetII{-18}
				\foreach \x in {0,...,3}
				{
					\draw[edge] (\offsetI+\angle*\x:\rad) -- (\offsetI+\angle+\angle*\x:\rad);
			    }
			    \foreach \x in {1,2}
				{
					\draw[edge] (\offsetI+\angle*\x:\rad) -- (center);
			    }
				\foreach \x in {0,...,3}
				{
					\draw (\offsetI+\angle*\x:\rad) node[vertex] {};
			    }
			\end{scope}
			\begin{scope}[xshift=1.5cm, yshift=-1.5cm]
				\newdimen\rad
				\rad=1cm
				\def\angle{90}
				\def\offsetI{-45}
				\def\offsetII{-18}
				\foreach \x in {0,...,3}
				{
					\draw[edge] (\offsetI+\angle*\x:\rad) -- (\offsetI+\angle+\angle*\x:\rad);
			    }
			    \foreach \x in {3,1}
				{
					\draw[edge] (\offsetI+\angle*\x:\rad) -- (center);
			    }
				\foreach \x in {0,...,3}
				{
					\draw (\offsetI+\angle*\x:\rad) node[vertex] {};
			    }
			\end{scope}
						\begin{scope}[xshift=1.5cm, yshift=1.5cm]
				\newdimen\rad
				\rad=1cm
				\def\angle{90}
				\def\offsetI{45}
				\def\offsetII{-18}
				\foreach \x in {0,...,3}
				{
					\draw[edge] (\offsetI+\angle*\x:\rad) -- (\offsetI+\angle+\angle*\x:\rad);
			    }
			    \foreach \x in {3,1}
				{
					\draw[edge] (\offsetI+\angle*\x:\rad) -- (center);
			    }
				\foreach \x in {0,...,3}
				{
					\draw (\offsetI+\angle*\x:\rad) node[vertex] {};
			    }
			\end{scope}
		\end{tikzpicture}
	\caption{The 21-vertex graph $\mathfrak G$.}
	\label{fig:graphG}
\end{figure}

By Corollary~\ref{cor:part210}, we know that $u$ has $10$ neighbours in $\mathcal T$.
At first sight, there appears to be $\binom{20}{10}$ possible ways to assign vertices in $\mathcal T$ as neighbours of $u$.
However, using Lemma~\ref{lem:21}, we will see that only 2560 of these $10$-sets of $\mathcal T$ satisfy Corollary~\ref{cor:21} below.
Furthermore, all 2560 such graphs are isomorphic to the graph $\mathfrak{G}$ in Figure~\ref{fig:graphG}.

For each $u \in V(\Gamma)$, we denote by $N(u)$ the subset of $V(\Gamma)$ consisting of vertices adjacent to $u$.
\begin{cor}
\label{cor:21}
    Let $\mathcal T\in \Omega$ and $u \in \overline{\mathcal T}$.
    Then $\Gamma[N(u) \cap \mathcal T] = 3K_2 + 4K_1$.
\end{cor}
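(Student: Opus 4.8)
The plan is to read off the combinatorial meaning of the three moment identities supplied by Lemma~\ref{lem:21} and use them to pin down exactly how the $10$-element set $N(u)\cap\mathcal T$ sits inside $\Gamma[\mathcal T]$, which by Lemma~\ref{lem:5squares} is the disjoint union of five $4$-cycles $\mathcal C_1,\dots,\mathcal C_5$. Write $X = A[\mathcal T]$ and let $\mathbf u = A[\mathcal T,\{u\}] \in \{0,1\}^{20}$ be the characteristic vector of $N(u)\cap\mathcal T$. I would interpret each of $\mathbf u^\transpose\mathbf u$, $\mathbf u^\transpose X\mathbf u$, and $\mathbf u^\transpose X^2\mathbf u$ as a count attached to the induced subgraph $H \coloneqq \Gamma[N(u)\cap\mathcal T]$ and to its distribution among the five cycles.

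First, $\mathbf u^\transpose\mathbf u = 10$ simply records that $u$ has ten neighbours in $\mathcal T$. Next, since $X$ is a symmetric $0/1$ matrix with zero diagonal, $\mathbf u^\transpose X\mathbf u$ counts each edge of $H$ twice, so $\mathbf u^\transpose X\mathbf u = 6$ forces $H$ to have exactly $3$ edges. Finally, because $5C_4$ is $2$-regular, the diagonal of $X^2$ is constant equal to $2$, while an off-diagonal entry $(X^2)_{ij}$ equals the number of common neighbours of $i$ and $j$: this is $2$ when $i,j$ are antipodal in a common $4$-cycle and $0$ otherwise. Hence $\mathbf u^\transpose X^2\mathbf u = 2\cdot 10 + 4P$, where $P$ is the number of antipodal pairs of selected vertices lying in a common cycle, and the value $28$ forces $P = 2$.

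I would then enumerate the possible induced subgraphs of a single $C_4$, recording for each the triple $(n,e,p)$ of selected vertices, induced edges, and antipodal selected pairs: namely $(0,0,0)$, $(1,0,0)$, the two two-vertex cases $(2,1,0)$ (adjacent) and $(2,0,1)$ (antipodal), the three-vertex case $(3,2,1)$ (an induced $P_3$), and the full cycle $(4,4,2)$. Letting the number of cycles of each type be nonnegative integers, the three moment conditions together with ``five cycles in total'' become a small integer linear system in which $\sum n_i = 10$, $\sum e_i = 3$, $\sum p_i = 2$, and the type-counts sum to $5$. Solving it, the edge equation forces no cycle to be of type $(4,4,2)$ and at most one of type $(3,2,1)$; and the remaining case containing a $(3,2,1)$ cycle, once the antipodal-pair count $P=2$ is imposed, forces a negative count for some cycle type and is discarded. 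This leaves the unique feasible solution of three $(2,1,0)$ cycles and two $(2,0,1)$ cycles. Since $\Gamma[\mathcal T]$ has no edges between distinct cycles, the three adjacent pairs give $3K_2$ and the two antipodal pairs give $4K_1$, so $H = 3K_2 + 4K_1$.

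The only delicate point is that the first two moments alone (the vertex and edge counts) do not determine $H$: they also admit a configuration containing an induced $P_3$ together with a disjoint $K_2$. The role of the third moment $\mathbf u^\transpose X^2\mathbf u$, interpreted as the antipodal-pair count $P$, is precisely to eliminate this possibility, so the main care needed is in correctly computing the coefficient $4P$ in $\mathbf u^\transpose X^2\mathbf u$ and in verifying that the resulting linear system has a unique nonnegative integer solution.
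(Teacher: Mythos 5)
Your proof is correct and rests on essentially the same ingredients as the paper's: the three moments from Lemma~\ref{lem:21} together with the $5C_4$ structure from Lemma~\ref{lem:5squares}, followed by an elementary count of how the ten neighbours distribute over the five cycles. The paper organizes that count slightly differently --- it uses the identity $J = A[\mathcal C_i]^2/2 + A[\mathcal C_i]$ on each $4$-cycle to get $\sum_i(\mathbf 1^\transpose\mathbf u[\mathcal C_i])^2 = \alpha/2+\beta = 20$, which combined with $\sum_i \mathbf 1^\transpose\mathbf u[\mathcal C_i]=10$ forces exactly two selected vertices per cycle, and then reads off three adjacent and two antipodal pairs from $\beta=6$ --- but this is equivalent to your enumeration of intersection types, and your verification that the $(3,2,1)$ case forces a negative type-count is sound.
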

\begin{proof}
Let $\mathbf u = A[\mathcal T,\{u\}]$ and set $\alpha = \mathbf u^\transpose A[\mathcal T]^2 \mathbf u$ and $\beta = \mathbf u^\transpose  A[\mathcal T] \mathbf u$.
By Lemma~\ref{lem:21}, we have $\alpha = 28$, $\beta = 6$, and $\mathbf 1^\transpose \mathbf u = 10$.
By Lemma~\ref{lem:5squares}, we can write $\mathcal T = \bigcup_{i=1}^5 \mathcal C_i$ where $\Gamma[\mathcal C_i] = C_4$ for each $i \in \{ 1,\dots,5\}$.
Observe that
\begin{align}
    \sum_{i=1}^5 (\mathbf 1^\transpose \mathbf u[\mathcal C_i])^2 = \sum_{i=1}^5 \mathbf u[\mathcal C_i]^\transpose J \mathbf u[\mathcal C_i] &= \sum_{i=1}^5 \mathbf u[\mathcal C_i]^\transpose \left(\frac{A[\mathcal C_i]^2}{2}+A[\mathcal C_i]\right) \mathbf u[\mathcal C_i] \nonumber \\
    &= \sum_{i=1}^5 \mathbf u[\mathcal C_i]^\transpose \frac{A[\mathcal C_i]^2}{2} \mathbf u[\mathcal C_i] + \sum_{i=1}^5 \mathbf u[\mathcal C_i]^\transpose A[\mathcal C_i] \mathbf u[\mathcal C_i] \nonumber \\
    &= \frac{\alpha}{2} + \beta = 20. \nonumber
\end{align}
Combining with $\mathbf 1^\transpose \mathbf u = 10$, we must have $\mathbf 1^\transpose \mathbf u[\mathcal C_i] = 2$ for each $i \in \{1,\dots,5\}$.
Next, observe that, for each $i \in \{1,\dots 5\}$, we have $\mathbf u[\mathcal C_i]^\transpose A[\mathcal C_i] \mathbf u[\mathcal C_i] \in \{0,2\}$.
Therefore,  $\mathbf u^\transpose A[\mathcal T] \mathbf u = \sum_{i=1}^5 \mathbf u[\mathcal C_i]^\transpose A[\mathcal C_i] \mathbf u[\mathcal C_i] = 6$ implies that $|\{ i \in \{1,\dots,5\} \; : \; \mathbf u[\mathcal C_i]^\transpose A[\mathcal C_i] \mathbf u[\mathcal C_i] = 2 \}| = 3$ and  $|\{ i \in \{1,\dots,5\} \; : \; \mathbf u[\mathcal C_i]^\transpose A[\mathcal C_i] \mathbf u[\mathcal C_i] = 0 \}| = 2$.
\end{proof}

By Corollary~\ref{cor:21}, for $\mathcal T \in \Omega$ and $u \in \overline{\mathcal T}$, the induced subgraph $\Gamma[\mathcal T \cup \{u\}]$ is isomorphic to $\mathfrak{G}$ in Figure~\ref{fig:graphG}.
Partition $\mathcal T$ as $\mathcal T = \bigcup_{i=1}^2 \mathcal C_i \cup \bigcup_{i=1}^3 \mathcal C_i^\star$ such that $\Gamma[\mathcal C_i] = C_4$, $\Gamma[\mathcal C_i^\star] = C_4$, $\Gamma[N(u) \cap \mathcal C_i] = 2K_1$ for $i \in \{1,2\}$, and $\Gamma[N(u) \cap \mathcal C_i^\star] = K_2$ for $i \in \{1,2,3\}$.
This way, we see that there are $2560 = \binom{5}{2}\cdot 2^2 \cdot 4^3$ subsets in $\binom{\mathcal T}{10}$ that can be used as the neighbourhood of $u$ to obtain $\mathfrak G = \Gamma[\mathcal T \cup \{u\}]$.

Subsequently, we consider graphs resulting from joining an independent set of order two to $\mathcal{T}$.
The next lemma provides conditions on the adjacency of the independent set with the vertices in $\mathcal{T}$.
\begin{lem}
\label{lem:22}
    Let $\mathcal T,\mathcal U \in \Omega$ and $u,v \in \mathcal U$ where $\mathcal T \ne \mathcal U$, $u \ne v$, and $u \not \sim v$.
    Suppose that $A$ is the adjacency matrix of $\Gamma$ and $A[\mathcal T,\{u,v\}] = [\mathbf u, \mathbf v]$.
    Then
    \[
    \left ( \mathbf u^\transpose \mathbf v, \mathbf u^\transpose A[\mathcal T]\mathbf v  \right ) \in \{ (4,14), (5,10), (6,6) \}.
    \]
\end{lem}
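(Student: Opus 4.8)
The plan is to invoke Lemma~\ref{lem:decaenBlock} with $\mathcal S = \{u,v\}$, so that $s = 2$ and $B = A[\mathcal T,\{u,v\}] = [\mathbf u, \mathbf v]$. Writing $X = A[\mathcal T]$, $b = \mathbf u^\transpose \mathbf v$, and $q = \mathbf u^\transpose X \mathbf v$, the $2\times 2$ matrix in Lemma~\ref{lem:decaenBlock} has the symmetric form $\left[\begin{smallmatrix} D & E \\ E & D\end{smallmatrix}\right]$, where the diagonal entry $D$ is exactly the scalar appearing in Lemma~\ref{lem:21} (both $u$ and $v$ lie in $\overline{\mathcal T}$, so $\mathbf u^\transpose \mathbf u = \mathbf v^\transpose \mathbf v = 10$, $\mathbf u^\transpose X\mathbf u = \mathbf v^\transpose X\mathbf v = 6$, and $\mathbf u^\transpose X^2\mathbf u = \mathbf v^\transpose X^2\mathbf v = 28$), while the off-diagonal entry $E$ depends on $b$, $q$, and $\mathbf u^\transpose X^2\mathbf v$. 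The first key step is to eliminate $\mathbf u^\transpose X^2\mathbf v$: since $\Gamma[\mathcal T] = 5C_4$ by Lemma~\ref{lem:5squares}, each $4$-cycle satisfies $A[\mathcal C_i]^2 + 2A[\mathcal C_i] = 2J_4$, so $X^2 = 2(I_5\otimes J_4) - 2X$; and Corollary~\ref{cor:21} gives $\mathbf 1^\transpose \mathbf u[\mathcal C_i] = \mathbf 1^\transpose \mathbf v[\mathcal C_i] = 2$ for each cycle $\mathcal C_i$, whence $\mathbf u^\transpose (I_5\otimes J_4)\mathbf v = 20$ and therefore $\mathbf u^\transpose X^2\mathbf v = 40 - 2q$. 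This leaves $E$ as a rational function of $x$ whose coefficients are linear in the two unknowns $b$ and $q$ only.

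Next I would extract the arithmetic constraint from the ideal membership. Because the matrix is $\left[\begin{smallmatrix} D & E \\ E & D\end{smallmatrix}\right]$, its determinant factors as $(D-E)(D+E)$; clearing the common denominator $\Delta = (x-12)(x+2)(x+4)(x+6)(x+8)$ and writing $N_D = \Delta D$, $N_E = \Delta E$, the required membership in $\frac{(x+6)^2}{(x-12)(x-2)^{21}(x+2)^5(x+4)^{10}(x+8)}\mathbb Z[x]$ becomes the divisibility $(x-12)(x+6)^4(x+8) \mid N_D^2 - N_E^2$. A direct evaluation shows that $N_D$ and $N_E$ share the same value at $x = 12$ and at $x = -8$ (the $b,q$-dependent terms carry factors $(x-12)$ or $(x+8)$), so the factors $(x-12)$ and $(x+8)$ are automatic; meanwhile the $s=1$ analysis of Lemma~\ref{lem:21} already forces $(x+6)^2 \mid N_D$. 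Hence the only genuinely new condition is $(x+6)^2 \mid N_E$, and since $N_E(-6)=0$ holds identically, this reduces to the single equation $N_E'(-6)=0$, which I expect to simplify to the linear relation $4b + q = 30$.

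Finally I would combine this relation with the rigid local structure. By Corollary~\ref{cor:21}, in each $4$-cycle $\mathcal C_i$ the neighbours of $u$ (respectively $v$) form either an edge ($K_2$) or a diagonal ($2K_1$), with exactly three edge-type and two diagonal-type cycles for each of $u$ and $v$. Since $A[\mathcal C_i]$ maps an edge-pattern indicator to $\mathbf 1$ and a diagonal-pattern indicator to twice the complementary diagonal, enumerating the possible pairs of patterns on a single $C_4$ yields a short list of admissible per-cycle contributions $(b_i,q_i) = (\mathbf u[\mathcal C_i]^\transpose\mathbf v[\mathcal C_i],\, \mathbf u[\mathcal C_i]^\transpose A[\mathcal C_i]\mathbf v[\mathcal C_i])$, and hence of $4b_i+q_i$. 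A case analysis on the number $a$ of cycles in which both $u$ and $v$ are edge-type, together with the number $z_s$ of cycles where $u$ and $v$ share the same diagonal, shows that $4b+q=30$ forces $a$ to be odd and pins the global totals down to exactly $(b,q)\in\{(4,14),(5,10),(6,6)\}$, as claimed.

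The step I expect to be the main obstacle is this last one: organising the per-cycle enumeration and the global counting so that the single equation $4b+q=30$ cleanly cuts the combinatorially realisable configurations down to precisely these three pairs (in particular, ruling out the even-$a$ configurations and the other integer points on the line $4b+q=30$). The algebraic reduction is, by contrast, essentially mechanical once the identity $\mathbf u^\transpose X^2\mathbf v = 40-2q$ and the coincidence of $N_D$ and $N_E$ at $x=12$ and $x=-8$ are in hand.
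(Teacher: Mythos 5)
Your proposal is correct and follows the same main line as the paper: both apply Lemma~\ref{lem:decaenBlock} with $\mathcal S=\{u,v\}$, both use $A[\mathcal C_i]^2=2J_4-2A[\mathcal C_i]$ together with $\mathbf 1^\transpose\mathbf u[\mathcal C_i]=\mathbf 1^\transpose\mathbf v[\mathcal C_i]=2$ to replace $\mathbf u^\transpose A[\mathcal T]^2\mathbf v$ by $40-2\beta$, and both extract from the $(x+6)^2$-divisibility exactly the linear relation $4\alpha+\beta=30$ (in your notation $4b+q=30$). Your route to that relation, via $(x+6)^2\mid N_D$ from the $s=1$ case and hence $(x+6)^2\mid N_E$, i.e.\ $N_E'(-6)=0$, is a tidier packaging of what the paper does by reducing the factored determinant $f(x)g(x)$ modulo $(x+6)^2$ to obtain two quadratic equations in $\alpha,\beta$; I checked that $N_E'(-6)=1080-144\alpha-36\beta$, so your single linear equation is indeed equivalent. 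The only genuine divergence is the endgame. Your per-cycle case analysis does close: with $a$ cycles where both neighbourhoods are edge-type one gets $3-a$ mixed cycles of each kind and $a-1$ both-diagonal cycles, a mod-$4$ count of the per-cycle contributions to $4\alpha+\beta$ forces $a\in\{1,3\}$, and enumerating the admissible pairs then yields precisely $(4,14)$, $(5,10)$, $(6,6)$. The paper avoids this bookkeeping entirely: it observes that $\mathbf w=A[\mathcal T]\mathbf u$ takes only the values $0,1,2$ prescribed by Corollary~\ref{cor:21}, whence $6\leqslant\beta=\mathbf w^\transpose\mathbf v\leqslant 14$, and this interval together with $\beta=30-4\alpha$ and the integrality of $\alpha$ immediately gives $\alpha\in\{4,5,6\}$. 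So the obstacle you flagged is real but can be sidestepped with a two-line bound.
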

\begin{proof}
Let $B = A[\mathcal T,\{u,v\}] = [\mathbf u, \mathbf v]$.
Define $\alpha = \mathbf u^\transpose \mathbf v$ and $\beta = \mathbf u^\transpose A[\mathcal T] \mathbf v$.
Then, by Lemma~\ref{lem:21},
\[
B^\transpose B = \begin{bmatrix}10 & \alpha \\ \alpha & 10 \end{bmatrix} \quad \text{ and } \quad B^\transpose A[\mathcal T] B = \begin{bmatrix}6 & \beta \\ \beta & 6 \end{bmatrix}.
\]
Thus, the determinant of
\begin{align*}
       & (x+4)I+\frac{10(x^2 + 4x-30)J}{(x-12)(x+6)(x+8)}-\frac{B^\transpose B}{x+4}-\frac{40J}{(x+2)(x+4)(x+6)}+\frac{B^\transpose A[\mathcal T]B}{(x+2)(x+4)} 
       \end{align*}
is equal to
\[
\frac{f(x)g(x)}{(x-12)(x+2)^2(x+4)^2(x+8)},
\]
where 
\begin{align*}
    f(x) &= x^5+  6 x^4-(\alpha+94) x^3 +(2\alpha+\beta-950) x^2 +(104\alpha-4\beta-2704) x +192\alpha-96\beta-1248, \\
    g(x) &= x^3+10x^2+(\alpha+22)x+2\alpha-\beta+18.
\end{align*}
By Lemma~\ref{lem:decaenBlock}, we must have $f(x)g(x) \in (x+6)^2 \mathbb Z[x]$.
Reducing modulo $(x+6)^2$ yields the simultaneous equations
\begin{align*}
    384\alpha^2+132\beta^2+624\alpha\beta-12000 \alpha-6240\beta+68400 &= 0, \\
    32\alpha^2-16\beta^2-56\alpha\beta+560\alpha+680\beta-6000 &= 0.
\end{align*}
It follows that $\beta+4\alpha = 30$.

Let $\mathbf w = A[\mathcal T]\mathbf u$.
By Lemma~\ref{lem:5squares}, we can partition the set $\mathcal T$ as $\mathcal T = \bigcup_{i=1}^5 \mathcal C_i$ such that $\Gamma[\mathcal C_i] = C_4$ for each $i \in \{1,\dots,5\}$.
By Corollary~\ref{cor:21}, there are two indices, $a$ and $b$, such that $\Gamma[N(u) \cap \mathcal C_i] = 2K_1$ for each $i \in \{a,b\}$.
It follows that
\[
 \mathbf w(w) = \begin{cases}
 0 & \text{ if $w \in N(u) \cap (\mathcal C_a \cup \mathcal C_b)$}; \\
 1 & \text{ if $w \in \mathcal T \backslash (\mathcal C_a \cup \mathcal C_b)$}; \\
 2 & \text{ otherwise}. \\
 \end{cases}
\]
Since $|N(v) \cap \mathcal C_i| = 2$ for each $i \in \{1,\dots,5\}$, it follows that $\mathbf w^\transpose \mathbf v \in \{6,10,14\}$.
Furthermore, since $\beta = \mathbf w^\transpose \mathbf v$ and $\beta+4\alpha = 30$, we must have $(\alpha,\beta) \in \{ (4,14), (5,10), (6,6) \}$.
\end{proof}

Fix $\mathcal T, \mathcal U \in \Omega$ with $\mathcal T \ne \mathcal U$ and $u \in \mathcal U$.
By Corollary~\ref{cor:21}, the induced subgraph $\Gamma[\mathcal T \cup \{u\}]$ is isomorphic to $\mathfrak{G}$ in Figure~\ref{fig:graphG}.
For each subset $\mathcal K \in \binom{\mathcal T}{10}$, define the corresponding (characteristic) vector $\mathbf v_{\mathcal K} \in \{0,1\}^{\mathcal T}$ such that $\mathbf v_{\mathcal K}(w) = 1$ if $w \in \mathcal K$ and $\mathbf v_{\mathcal K}(w) = 0$ otherwise.
Let $\mathbf u$ denote $\mathbf v_{N(u) \cap \mathcal T}$.
Define the set $\mathfrak B_u \subset \{0,1\}^{\mathcal T}$ to consist of vectors $\mathbf v$ such that 
\begin{itemize}
    \item $\mathbf v^\transpose \mathbf v = 10$.
    \item $\mathbf v^\transpose A[\mathcal T] \mathbf v = 6$.
    \item $\mathbf v^\transpose A[\mathcal T]^2 \mathbf v = 28$.
    \item $(\mathbf u^\transpose \mathbf v,\mathbf u^\transpose A[\mathcal T] \mathbf v) \in \{(4,14),(5,10),(6,6)\}$.
\end{itemize}

By Lemma~\ref{lem:21} and Lemma~\ref{lem:22}, for any $v \in \mathcal U$ with $v \not \sim u$, we must have $\mathbf v_{N(v) \cap \mathcal T} \in \mathfrak B_u$.
Form a graph $G_u$ with vertex set $\mathfrak B_u$ where $\mathbf v \in \mathfrak B_u$ and $\mathbf w \in \mathfrak B_u$ are adjacent if and only if
\[
(\mathbf v^\transpose \mathbf w,\mathbf v^\transpose A[\mathcal T] \mathbf w) \in \{(4,14),(5,10),(6,6)\}.
\]

Note that the graph $G_u$ does not depend on the choice of $u \in \mathcal U$.
For each $u^\star \in \mathcal U$, since $\Gamma[\mathcal T \cup \{u^\star\}] = \Gamma[\mathcal T \cup \{u\}] = \mathfrak G$, there exists a permutation matrix $P$ such that $\mathbf v_{N(u^\star) \cap \mathcal T} = P\mathbf u$ and $P^\transpose A[\mathcal T]P = A[\mathcal T]$.
Thus, $\mathbf v \in \mathfrak B_u$ if and only if $P\mathbf v \in \mathfrak B_{u^\star}$ and furthermore, $G_u$ is isomorphic to $G_{u^\star}$.

We remark that the graph $G_u$ has 454 vertices.
This can be proved without the use of a computer using Lemma~\ref{lem:22}, but we omit it since our proof is a bit tedious and the graph $G_u$ can be readily generated using a computer.

By Lemma~\ref{lem:5squares}, the induced subgraph $\Gamma[\mathcal U]$ is the disjoint union of five $4$-cycles, $5C_4$.
This implies that there exists a subset of $10$ pairwise nonadjacent vertices in $\mathcal U$ including the vertex $u$.
This subset corresponds to a clique of order $9$ in the graph $G_u$.
However, using Magma (and independently, Mathematica), we find that the largest clique in $G_u$ has $7$ vertices.
This contradicts our assumption that the graph $\Gamma$ exists and completes the proof of Theorem~\ref{thm:last5_3ev}.

\section{Acknowledgements}

The first author has benefited from insightful conversations with Jack Koolen and Akihiro Munemasa.
The second author has benefited from advice from Chris Godsil.
The first author was supported by the Singapore
Ministry of Education Academic Research Fund (Tier 1);  grant numbers:  RG21/20 and RG23/20.

\appendix 

\section{Tables of certificates of infeasibility}

In Table~\ref{tab:39polydim18}, we list all but five of the candidate characteristic polynomials from Theorem~\ref{thm:candpols} together with their certificates of infeasibility. 

\begin{longtable}{l}

\endfirsthead

\multicolumn{1}{c}
{{\bfseries \tablename\ \thetable{} -- continued from previous page}} \\ \hline 
\endhead

\hline \multicolumn{1}{r}{{Continued on next page}} \\ \hline
\endfoot

\endlastfoot

          1.\;\;\: $(x+5)^{42} (x-11)^{12} (x-13)^3 (x^3-39x^2+495x-2049)$   \\
          \qquad $(0, 0, 0, 34294, 10143, 1812)$ \\
          \hline
          
          2.\;\;\: $(x+5)^{42} (x-11)^{13} (x-13)^2 (x^3-41x^2+551x-2423)$   \\
          \qquad $(-305901333, 0, 0, -29631, -1854, 35)$ \\
          \hline
          
          3.\;\;\: $(x+5)^{42} (x-11)^{14} (x-13) (x-15) (x^2-28x+191)$   \\
          \qquad $(-1458935482, 0, 0, -81026, 9, 0)$ \\
          \hline
          
          4.\;\;\: $(x+5)^{42} (x-11)^{12} (x-13)^4 (x^2-26x+157)$   \\
          \qquad $(0, 0, 0, -843, -338)$ \\
          \hline
          
          5.\;\;\: $(x+5)^{42} (x-11)^{12} (x-13)^2 (x^2-24x+139) (x^2-28x+191)$   \\
          \qquad $(-7692466694472, 0, 0, -458256882, -36586502, -2928267, -235607)$ \\
          \hline
          
          6.\;\;\: $(x+5)^{42} (x-11)^{13} (x-13)^2 (x-15) (x^2-26x+161)$   \\
          \qquad $(0, 0, 0, 0, 370, 171)$ \\
          \hline
          
          7.\;\;\: $(x+5)^{42} (x-9) (x-11)^{11} (x-13)^4 (x^2-28x+191)$   \\
          \qquad $(19949599256, 0, 0, 2370351, 198001, 16971)$ \\
          \hline
          
          8.\;\;\: $(x+5)^{42} (x-11)^{12} (x-13)^3 (x^3-39x^2+495x-2033)$   \\
          \qquad $(-311392576602, 0, 0, -31763185, -2280433, -162889)$ \\
          \hline
          
          9.\;\;\: $(x+5)^{42} (x-11)^{11} (x-13)^3 (x^2-24x+139) (x^2-26x+161)$   \\
          \qquad $(0, 0, 0, 0, 0, -1389, -790)$ \\
          \hline
          
          10.\; $(x+5)^{42} (x-11)^{12} (x-13)^2 (x-15) (x^3-37x^2+447x-1763)$   \\
          \qquad $(0, 0, 0, 0, -122674, -43601, -8823)$ \\
          \hline
          
          11.\; $(x+5)^{42} (x-11)^{13} (x-13) (x-15)^2 (x^2-24x+139)$   \\
          \qquad $(0, 0, 0, 0, 420, 199)$ \\
          \hline
          
          12.\; $(x+5)^{42} (x-11)^{11} (x-13)^4 (x^3-37x^2+443x-1711)$   \\
          \qquad $(0, 0, 0, 234708, 62958, 8869)$ \\
          \hline
          
          13.\; $(x+5)^{42} (x-9) (x-11)^{10} (x-13)^5 (x^2-26x+161)$   \\
          \qquad $(0, 0, 0, 125208, 41635, 8550)$ \\
          \hline
          
          14.\; $(x+5)^{42} (x-11)^{11} (x-13)^3 (x^4-50x^3+924x^2-7470x+22259)$   \\
          \qquad $(0, 0, 0, -24742927, -5579245, -747551, -85438)$ \\
          \hline
          
          15.\; $(x+5)^{42} (x-11)^{10} (x-13)^3 (x^2-24x+139) (x^3-37x^2+447x-1763)$   \\
          \qquad $(0, 0, 0, 0, 0, 0, 2249, 1485)$ \\
          \hline
          
          16.\; $(x+5)^{42} (x-9) (x-11)^{12} (x-13)^3 (x-15)^2$   \\
          \qquad $(-196837694, 0, 0, -55815, -5074)$ \\
          \hline
          
          17.\; $(x+5)^{42} (x-11)^{11} (x-13)^2 (x-15) (x^2-24x+139)^2$   \\
          \qquad $(0, 0, 0, -172391, -37294, -2664)$ \\
          \hline
          
          18.\; $(x+5)^{42} (x-11)^{11} (x-13)^5 (x^2-24x+131)$   \\
          \qquad $(0, 0, 0, -2265, -946)$ \\
          \hline
          
          19.\; $(x+5)^{42} (x-9) (x-11)^9 (x-13)^5 (x^3-37x^2+447x-1763)$   \\
          \qquad $(0, 0, 0, -34871227, -8914066, -1403713, -190406)$ \\
          \hline
          
          20.\; $(x+5)^{42} (x-11)^{10} (x-13)^4 (x^4-48x^3+850x^2-6576x+18749)$  \\
          \qquad $(0, 0, 0, 0, 0, -176, -103)$ \\
          \hline
          
          21.\; $(x+5)^{42} (x-11)^{11} (x-13)^3 (x-15) (x^3-35x^2+399x-1477)$   \\
          \qquad $(0, 0, 0, 0, -94177, -32024, -5780)$ \\
          \hline
          
          22.\; $(x+5)^{42} (x-9) (x-11)^{10} (x-13)^4 (x-15) (x^2-24x+139)$   \\
          \qquad $(0, 0, 0, 0, -557458, -207778, -43657)$ \\
          \hline
          
          23.\; $(x+5)^{42} (x-11)^9 (x-13)^3 (x^2-24x+139)^3$   \\
          \qquad $(0, 0, 0, -415, -169)$ \\
          \hline
          
          24.\; $(x+5)^{42} (x-11)^{10} (x-13)^5 (x^3-35x^2+395x-1433)$   \\
          \qquad $(27648369302, 0, 0, 3643351, 310262, 26593)$ \\
          \hline
         
          25.\; $(x+5)^{42} (x-11)^9 (x-13)^5 (x^4-46x^3+780x^2-5778x+15779)$   \\
          \qquad $(0, 0, 0, 68304472, 16503986, 2380728, 292487)$ \\
          \hline
         
          26.\; $(x+5)^{42} (x-11)^{11} (x-13)^4 (x-15) (x^2-22x+113)$   \\
          \qquad $(0, 0, 0, 108251, 29388, 4254)$ \\
          \hline
         
          27.\; $(x+5)^{42} (x-11)^9 (x-13)^4 (x^2-24x+139) (x^3-35x^2+399x-1477)$   \\
          \qquad $(0, 0, 0, -1845487303, -379343200, -48329261, -5293439, -539442)$ \\
          \hline
         
          28.\; $(x+5)^{42} (x-9) (x-11)^8 (x-13)^5 (x^2-24x+139)^2$   \\
          \qquad $(34043665264, 0, 0, 5251934, 519994, 51999)$ \\
          \hline
         
          29.\; $(x+5)^{42} (x-11)^{10} (x-13)^5 (x-15) (x^2-20x+95)$   \\
          \qquad $(64088824162, 0, 0, 8048870, 611903, 47069)$ \\
          \hline
         
          30.\; $(x+5)^{42} (x-11)^9 (x-13)^6 (x^3-33x^2+351x-1207)$   \\
          \qquad $(0, 0, 0, 0, 1139, 599)$ \\
          \hline
         
          31.\; $(x+5)^{42} (x-9) (x-11)^8 (x-13)^6 (x^3-35x^2+399x-1477)$   \\
          \qquad $(0, 0, 0, 0, -814967, -316527, -68325)$ \\
          \hline
         
          32.\; $(x+5)^{42} (x-11)^9 (x-13)^5 (x^2-22x+113) (x^2-24x+139)$   \\
          \qquad $(0, 0, 0, 0, 0, -1163, -700)$ \\
          \hline
         
          33.\; $(x+5)^{42} (x-9)^2 (x-11)^7 (x-13)^7 (x^2-24x+139)$   \\
          \qquad $(0, 0, 0, 0, 248, 129)$ \\
          \hline
         
          34.\; $(x+5)^{42} (x-11)^8 (x-13)^6 (x^2-20x+95) (x^2-24x+139)$   \\
          \qquad $(531003714336, 0, 0, 41982177, 3283852, 255629, 19664)$ \\
          \hline
         
          35.\; $(x+5)^{42} (x-9) (x-11)^8 (x-13)^7 (x^2-22x+113)$   \\
          \qquad $(-24681212876, 0, 0, -4267959, -431106, -43111)$ \\
          \hline
         
          36.\; $(x+5)^{42} (x-11)^9 (x-13)^6 (x^3-33x^2+351x-1191)$   \\
          \qquad $(-6730490844, 0, 0, -798929, -33284, 0)$ \\
          \hline
         
          37.\; $(x+5)^{42} (x-11)^8 (x-13)^7 (x^3-31x^2+311x-1009)$   \\
          \qquad $(0, 0, 0, 0, 1329, 722)$ \\
          \hline
         
          38.\; $(x+5)^{42} (x-9) (x-11)^7 (x-13)^8 (x^2-20x+95)$   \\
          \qquad $(-816690681, 0, 0, -119057, -4880, 14)$ \\
          \hline
         
          39.\; $(x+5)^{42} (x-7) (x-11)^9 (x-13)^8$   \\
          \qquad $(143620, 0, 0, 253)$ \\
          \hline
         
          \caption{Candidate characteristic polynomials of Theorem~\ref{thm:candpols} and their certificates of infeasibility.}
    \label{tab:39polydim18}
\end{longtable}


\begin{thebibliography}{9}
        \bibitem{Azarija75}
		{J. Azarija and T. Marc}, \emph{There is no (75, 32, 10, 16) strongly regular graph}, Linear Algebra Appl. \textbf{557}, 62--83 (2018).
		
		\bibitem{Azarija95}
		{J. Azarija and T. Marc}, \emph{There is no (95, 40, 12, 20) strongly regular graph}, J. Combin. Des. \textbf{28}(4), 294--306 (2020).
        
        \bibitem{balla21}
		{I. Balla}, \emph{Equiangular lines and regular graphs}, arXiv:2110.15842, 2021.
		
		\bibitem{shermanmorrison}
		{M.S. Bartlett}, \emph{An inverse matrix adjustment arising in discriminant analysis}, Ann. Math. Statist. \textbf{22}(1), 107--111 (1951).
		
	    \bibitem{magma}
        {W. Bosma, J. Cannon, and C. Playoust}, \emph{The Magma algebra system. I. The user language}, J. Symbolic Comput. \textbf{24}, 235--265 (1997).
		
		\bibitem{srg49}
        {F.C. Bussemaker, W.H. Haemers, R. Mathon, and H.A. Wilbrink}, \emph{A $(49,16,3,6)$ strongly regular graph does not exist}, European J. Combin. \textbf{10}, 413--418 (1989).
		
        \bibitem{decaen} {D. De~Caen}, \emph{The spectra of complementary subgraphs in a strongly regular graph}, European J. Combin. {\bf19}, 559--565 (1998).

        \bibitem{decaen2000} {D. De~Caen}, \emph{Large equiangular sets of lines in Euclidean space}, Electron. J. Comb. \textbf{7}, R55 (2000).

        \bibitem{CKLY22} {M.-Y. Cao, J.H. Koolen, Y.-C.R. Lin, and W.-H.Yu}, \emph{The Lemmens-Seidel conjecture and forbidden subgraphs}, J. Combin. Theory Ser. A \textbf{185}, 105538 (2022).

        \bibitem{Cau:Interlace}
	    {A.-L. Cauchy},
	    \newblock \emph{Sur l'{\'e}quation {\`a} l'aide de laquelle on d{\'e}termine les in{\'e}galit{\'e}s s{\'e}culaires des mouvements des plan\`{e}tes}.
	    \newblock In {{\OE}uvres compl\`{e}tes, IIi{\`e}me S{\'e}rie},
	    Gauthier-Villars, Paris, 1829.
        
        
        \bibitem{vDS} {E.R. van Dam and E. Spence},
        \emph{Small regular graphs with four eigenvalues}, Discrete Math. {\bf189},
        233--257 (1998).
		
		
		\bibitem{Fisk:Interlace05}
		{S. Fisk},
		\newblock \emph{A very short proof of {C}auchy's interlace theorem for eigenvalues of {H}ermitian matrices},
		\newblock {Amer. Math. Monthly} \textbf{112}(2), 118--118 (2005).
		
		\bibitem{GlazYu}
 		{A. Glazyrin and W.-H. Yu}, \emph{Upper bounds for $s$-distance sets and equiangular lines}, Adv. Math. \textbf{330}, 810--833 (2018).
 		
 		\bibitem{GoRo}
        Chris Godsil and Gordon Royle, \emph{Algebraic graph theory}, Graduate Texts in
        Mathematics, New York: Springer, 2000.

        \bibitem{github} 
		{ G.R.W. Greaves}, Magma companion code and computations from the paper ``Real equiangular lines in dimension 18 and the Jacobi identity for complementary subgraphs'', \url{https://github.com/grwgrvs/equiangular18/}.
		
		\bibitem{GG18}
		{G.R.W. Greaves}, \emph{Equiangular line systems and switching classes containing regular graphs}, Linear Algebra Appl. \textbf{536}, 31--51 (2018).
		
		\bibitem{GKMS16} 
		{G. Greaves, J. Koolen, A. Munemasa and F. Sz\" oll\H osi}, \emph{Equiangular lines in Euclidean spaces}, J. Combin. Theory Ser. A \textbf{138}, 208--235 (2016).
		
		\bibitem{GSY21}
		{G.R.W. Greaves, J. Syatriadi, and P. Yatsyna}, \emph{Equiangular lines in low dimensional {E}uclidean spaces}, Combinatorica \textbf{41}(6), 839--872 (2021).
		
		\bibitem{GSYdim1718}
		{G.R.W. Greaves, J. Syatriadi, and P. Yatsyna}, \emph{Equiangular lines in {E}uclidean spaces: dimensions 17 and 18}, Math. Comp. \textbf{92}(342), 1867-1903 (2023).
		
		\bibitem{GreavesYatsyna19} 
		{G.R.W. Greaves and P. Yatsyna}, \emph{On equiangular lines in 17 dimensions and the characteristic polynomial of a Seidel matrix}, Math. Comp. \textbf{88}(320), 3041--3061 (2019).
		
		\bibitem{haantjes48}
		{J. Haantjes},\emph{ Equilateral point-sets in elliptic two- and three-dimensional spaces}, Nieuw Arch. Wiskd. \textbf{22}, 355--362 (1948).
		
		\bibitem{Hwang:Interlace04}
		{S.-G. Hwang},
		\newblock \emph{Cauchy's interlace theorem for eigenvalues of {H}ermitian matrices}, \newblock {Amer. Math. Monthly} \textbf{111}(2), 157--159 (2004).
		
		\bibitem{jiangpolyanskii}
		{Z. Jiang and A. Polyanskii}, \emph{Forbidden subgraphs for graphs of bounded spectral radius, with applications to equiangular lines}, Israel J. Math. \textbf{236}(1), 393--421 (2020).
		 
		\bibitem{annmath}
		{Z. Jiang, J. Tidor, Y. Yao, S. Zhang, and Y. Zhao}, \emph{Equiangular lines with a fixed angle}, Ann. of Math. \textbf{194}(3), 729--743 (2021).
        
        \bibitem{KaoYu}
		{W.-J. Kao and W.-H. Yu}, \emph{Four-point semidefinite bound for equiangular lines}, arXiv:2203.05828, 2022.
        
        \bibitem{KingTang} {E. King and X. Tang}, \emph{New upper bounds for equiangular lines by pillar decomposition}, SIAM J. Discrete Math. 
		\textbf{33}(4), 2479--2508 (2019).

        \bibitem{delaat}
        {D. de Laat, F.C. Machado, F. M. de Oliveira Filho and F. Vallentin},
        \emph{$k$-point semidefinite
        programming bound for equiangular lines},  Math. Program. (2021).
        
        
        \bibitem{lemmens73}
		{P.W.H. Lemmens and J.J. Seidel},
		\emph{Equiangular lines},
		{J. Algebra} {\bf 24}(3), 494--512 (1973).
     
        \bibitem{LinYu} {Y.-C. Lin and W.-H. Yu}, \emph{Saturated configuration and new large construction of equiangular lines}, Linear Algebra Appl. \textbf{588}, 272--281 (2020).
		
		\bibitem{vLintSeidel66}
		{J.H. van Lint and J.J. Seidel}, \emph{Equilateral point sets in elliptic geometry}, Indag. Math. \textbf{28}(3), 335--348 (1966).
		
		\bibitem{okudayu16}
		{T. Okuda and W.-H. Yu}, \emph{A new relative bound for equiangular lines and nonexistence of tight spherical designs of harmonic index 4}, European J. Combin. \textbf{53}, 96--103 (2016).
		
		\bibitem{schurcomplement}
		{V.V. Prasolov}, \emph{Problems and Theorems in Linear Algebra}, American Math. Soc., Providence, RI, 1994.
		
		\bibitem{Seidel74}
		{J.J. Seidel}, \emph{Graphs and two-graphs}, in: Proc. 5th Southeastern Conf. on Combinatorics, Graph Theory, and Computing, Utilitas Mathematica Publishing Inc., Winnipeg, Canada, 1974.
		
		\bibitem{oeis} N.J.A. Sloane (editor), The On-Line Encyclopedia of Integer Sequences. Published electronically at \url{https://oeis.org/}.
		
		\bibitem{Szo} {F. Sz\" oll\H osi}, \emph{A remark on a construction of D. S. Asche}, Discrete Comput. Geom. \textbf{61}(1), 120--122 (2019).

          \bibitem{mathematica}
		Wolfram Research, Inc., Mathematica, Version 13.0, Champaign, IL (2021).
		
		\bibitem{Yu17} {W.-H. Yu}, \emph{New bounds for equiangular lines and spherical two-distance sets}, SIAM J. Discrete Math. 
		\textbf{31}(2), 908--917 (2017).
\end{thebibliography}
\end{document}